\def\RR{{\mathbb{R}}}
\def\CC{{\mathbb{C}}}
\numberwithin{equation}{section}
\newcommand{\dom}{\mathop{\rm dom} }
\newcommand{\lv}{\mathop{\rm Lv} }
\newcommand{\val}{\mathop{\rm val}}
\newcommand{\ncrk}{\mathop{\rm ncrk }}
\newcommand{\diag}{\mathop{\rm diag} }
\DeclareMathOperator{\trace}{Tr}
\DeclareMathOperator{\rk}{rk}
\DeclareMathOperator{\spec}{spec}
\newcommand{\ad}{\mathop{\rm ad} }
\newtheorem{Thm}{Theorem}[section]
\newtheorem{Prop}[Thm]{Proposition}
\newtheorem{Lem}[Thm]{Lemma}
\newtheorem{Question}[Thm]{Question}
\newtheorem{Conj}[Thm]{Conjecture}
\theoremstyle{definition}
\newtheorem{Clm}{Claim}
\newtheorem{Def}[Thm]{Definition}
\newtheorem{Ex}[Thm]{Example}
\title{Generalized gradient flows in Hadamard manifolds \\and\\
convex optimization on entanglement polytopes}
\author{Hiroshi Hirai\footnote{
Graduate School of Mathematics,
Nagoya University, Nagoya, 464-8602, Japan.
\texttt{\footnotesize hirai.hiroshi@math.nagoya-u.ac.jp}
}}
\begin{document}

\maketitle
\begin{abstract}
In this paper, we address the optimization problem of minimizing $Q(df_x)$ over a Hadamard manifold ${\cal M}$, where $f$ is a convex function on ${\cal M}$, $df_x$ is the differential of $f$ at $x \in {\cal M}$, and $Q$ is a function on the cotangent bundle of ${\cal M}$. This problem generalizes the problem of minimizing the gradient norm $\|\nabla f(x)\|$ over ${\cal M}$, studied by Hirai and Sakabe FOCS2024. We formulate a natural class of $Q$ in terms of convexity and invariance under parallel transports, and introduce a generalization of the gradient flow of $f$ that is expected to minimize $Q(df_x)$. For basic classes of manifolds, including the product of the manifolds of positive definite matrices, we prove that this gradient flow attains $\inf_{x\in {\cal M}} Q(df_x)$ in the limit, and yields a duality relation. This result is applied to the Kempf-Ness optimization for GL-actions on tensors, which is Euclidean convex optimization on the class of moment polytopes, known as the entanglement polytopes. This type of convex optimization arises from tensor-related subjects in theoretical computer science, such as quantum functional, $G$-stable rank, and noncommutative rank.
\end{abstract}

Keywords: Hadamard manifold, gradient flow, geodesic convexity, boundary at infinity,  
positive definite matrices, moment map, moment polytope, entanglement polytope, tensor, 
quantum functional, $G$-stable rank, noncommutative rank \\
MSC-classifications: 90C25, 53C35
\section{Introduction}
The present paper is a continuation of the work~\cite{Hirai2025,HiraiSakabe2024FOCS} studying ``divergent" gradient flows of lower-unbounded convex functions $f$ on Hadamard manifolds ${\cal M}$
and their applications to optimization problems hidden 
in the boundary ${\cal M}^{\infty}$ at infinity of ${\cal M}$.
In~\cite{HiraiSakabe2024FOCS},  Hirai and Sakabe established
a duality relation of the infimum of the gradient-norm $\|\nabla f(x)\|$ over $x \in {\cal M}$ by the use of the gradient flow  of $f$.
Specifically, if the infimum is positive, 
then it is equal to 
the supremum of the negative of the {\em recession function} 
$f^{\infty}$~\cite{Hirai_Hadamard2022,KLM2009JDG} of $f$, which is a function defined on the boundary ${\cal M}^{\infty}$. 
Further, the infimum and supremum are attained by 
the limit of the solution $x(t)$ of the gradient-flow ODE $\dot x(t) = -\nabla f (x(t))$, $x(0) = x_0$ as follows:
\begin{equation}\label{eqn:min_gradient-norm}
\lim_{t \to \infty} \|\nabla f(x(t))\| = \inf_{x \in {\cal M}} \|\nabla f(x)\| = 
\sup_{\xi \in {\cal M}^{\infty}} - f^{\infty}(\xi) = - f^{\infty} \left(\lim_{t\to \infty}x(t)\right), 
\end{equation}
where the convergence $\lim_{t \to \infty} x(t) \in {\cal M}^{\infty}$ is with respect to the cone topology of 
the visual compactification ${\cal M} \cup {\cal M}^{\infty}$.

Such an investigation originates from 
{\em operator scaling}~\cite{Gurvits2004} and its generalizations, which
have brought about remarkable developments and numerous applications in recent years; see e.g., \cite{BFGOWW_FOCS2018,BFGOWW_FOCS2019,GGOW} and the references therein. They involve optimization of the {\em Kempf-Ness functions} 
associated with reductive group actions, which is formulated as 
(possibly lower-unbounded) convex optimization on Hadamard manifolds.
Hirai and Sakabe~\cite{HiraiSakabe2024FOCS} utilized (\ref{eqn:min_gradient-norm}) to 
reveal the asymptotic behavior of the gradient flow and its discretization ({\em gradient descent}) for operator scaling. 
They also revealed the link with the theory of {\em moment-weight inequality} 
(Georgoulas, Robbin, and Salamon~\cite{GRS_Book}), which provides 
differential geometric understanding of stability in geometric invariant theory (GIT). 
In a follow-up paper, Hirai~\cite{Hirai2025} showed a refinement of (\ref{eqn:min_gradient-norm}) 
for the Hadamard manifold ${\cal P}_n$ of positive definite matrices ({\em PD-manifold}), 
in which the norm $\| \cdot \|$ is a general GL-invariant Finsler norm and $x(t)$ is  a non-differentiable generalization~(a {\em curve of maximal slope}~\cite{Ambrosio_Book}) of gradient flow.
This leads to a new characterization of the {\em noncommutative rank}~\cite{IQS2017} 
in terms of the trace norm of the scaling limit of operator scaling.
%

In the light of these developments, we actively interpret (\ref{eqn:min_gradient-norm}) 
as an optimization framework
of minimizing the gradient-norm $\|\nabla f(x)\|$ over ${\cal M}$, and consider the following generalization.
{\em Given a function $Q$ on the cotangent bundle of ${\cal M}$, 
minimize the $Q$-value of 
the differential $df_x$ of $f$ over $x \in {\cal M}$:}
\begin{equation}\label{eqn:minimum_Q-gradient-norm_problem}
\mbox{Min.} \quad Q(df_x) \quad {\rm s.t.} \quad x \in {\cal M}.
\end{equation}
We call this problem the {\em minimum $Q$-gradient-norm problem}.
It is clear that 
the above gradient-norm setting is obtained by taking the dual norm $Q := \|\cdot\|^*$. 
We then address the following questions:
\begin{itemize}
  \item[1.] Which does $({\cal M},Q)$ admit a duality of form 
  $\displaystyle \inf_{x \in {\cal M}} Q(df_x) = \sup \ [\, \cdots]$?
  \item[2.] Is there an analogue of gradient flow $x(t)$ that minimizes $Q(df_x)$ in its limit?
  \item[3.] Can we explore further applications? 
\end{itemize}

The main contribution of this paper is a partial but 
satisfactory answer to these questions.
We formulate a natural class of functions $Q$ in terms of convexity and 
invariance under parallel transports (or {\em holonomy group}), 
and introduce a generalization of the gradient flow, called the {\em $Q$-gradient flow}, 
that is expected to minimize $Q(df_x)$ in the limit. 
We establish in Theorem~\ref{thm:main}
an analogue of (\ref{eqn:min_gradient-norm}) for basic classes of Hadamard manifolds 
(Euclidean space, generic Hadamard manifold, PD-manifold, and their products), 
although we could not prove it for the general case (Conjecture~\ref{conj}).

This incomplete result has promising applications. 
Indeed, it is
applicable to the Kempf-Ness optimization for GL-actions on tensors, that widely appears in quantum information and theoretical computer science, 
such as quantum marginal problem, tensor rank, and matrix multiplication; see~\cite{Burgisser_ACT_Book,BFGOWW_FOCS2018,Landsberg_tensors,Landsberg_tensors2019}.
In this setting, 
the minimum $Q$-gradient-norm problem~(\ref{eqn:min_gradient-norm})  
becomes Euclidean convex optimization on the {\em moment polytope} $\Delta(v)$~\cite{GuilleminSternberg1982-I,GuilleminSternberg1984-II,Kirwan1984} 
for the GL-action on a tensor $v$. This class of moment polytopes is particularly called the {\em entanglement polytopes} (Walter, Doran, Gross, Christandl~\cite{WDGC2013_Science}) for their importance of studying entanglements in quantum multi-particle systems.
Convex optimization on the entanglement polytope $\Delta(v)$ has gained attention for 
its connections with tensor-rank subjects.
However, there is no good inequality description for moment polytopes, 
and optimizing on $\Delta(v)$ appears to be quite difficult; see the recent work~\cite{vdBCLNWZ_STOC2025} for this direction. 
Our result implies an inf-sup duality relation (Theorem~\ref{thm:duality_entanglement})
for optimization problems on $\Delta(v)$ with symmetric convex objectives. 
It is expected to lead to a good characterization 
(NP $\cap$ co-NP characterization) for this problem. 
Further, the convergence of the $Q$-gradient flow 
gives rise to a sequence of points in $\Delta(v)$
accumulating to a minimizer (Theorem~\ref{thm:moment-limit}).
This generalizes the {\em moment-limit theorem}~\cite{GRS_Book}, 
and suggests a simple (sub)gradient-type algorithm solving this problem.
We demonstrate how our theory is applied to concrete examples ({\em quantum functional}~\cite{CristandlVramaZuiddam2023}, 
{\em $G$-stable rank}~\cite{Derksen2022ANT}, and noncommutative rank).
It is an important challenge to 
discretize the $Q$-gradient flow and develop a unified algorithm for these problems.

The rest of the paper is organized as follows.
In Section~\ref{sec:preliminaries}, we provide the necessary backgrounds on Riemannian geometry, Hadamard manifold, convex analysis, and PD-manifold.   
In Section~\ref{sec:Q-gradient_flow}, 
we introduce the $Q$-gradient flow, and establish the duality of 
the minimum $Q$-gradient-norm problem for the above-mentioned classes of $({\cal M},Q)$.
In Section~\ref{sec:entanglement_polytope}, 
we present applications to convex optimization on entanglement polytopes.

\section{Preliminaries}\label{sec:preliminaries}

We will use the standard notions of Riemannian geometry; see e.g.,\cite{Sakai1996}. See also \cite{Boumal_Book} for optimization perspective. 
We consider a complete Riemannian manifold ${\cal M}$ 
equipped with Riemannian metric $\langle, \rangle$ and Levi-Civita connection $\nabla$, where
$T_x = T_x{\cal M}$ and 
$T_x^* = T_x^*{\cal M}$ denote the tangent and cotangent spaces at $x \in {\cal M}$, respectively.
Let $T{\cal M} = \bigsqcup_{x \in {\cal M}}T_x$ and $T^*{\cal M} = \bigsqcup_{x \in M} T_x^*$
denote the tangent and cotangent bundles of ${\cal M}$, respectively.
The norm $\|\cdot\|$ on $T_x$ is defined by 
$\|v\| = \sqrt{\langle v,v \rangle}$ $(v \in T_x)$, and 
the dual norm $\|\cdot\|^*$ on $T_x^*$ is defined by
$\|p\|^* := \sup_{v \in T_x:\|v\|=1} p(v)$ $(p \in T_x^*)$.
For a curve 
$\gamma: [0, \ell] \to {\cal M}$, let $\dot \gamma(t) \in T_{\gamma(t)}$ denote the tangent vector of $\gamma$ at $t$. 
For a function $f$ on ${\cal M}$,
the differential $df_x \in T_x^*$ of $f$ at $x \in {\cal M}$ is given 
by $df_x(v) := (d/dt)\mid_{t = 0} f(\gamma(t))$, where $\gamma$ is any curve with $\gamma(0) = x$ and $\dot \gamma(0) = v$.
The gradient $\nabla f(x) \in T_x$ is given by $\langle \nabla f(x),v\rangle = df_x(v)$ $(v \in T_x)$.
For $t \in [0,\ell]$,
let $\tau_{\gamma}^t:T_{\gamma(0)} \to T_{\gamma(t)}$ denote the parallel transport
along $\gamma$ with respect to $\nabla$. 
The inverse map $\tau_{\gamma}^{-t}:= \tau_{\gamma^{-1}}^{\ell - t}$
is obtained by the reverse curve $t \mapsto \gamma^{-1}(t) := \gamma(\ell -t)$.
The parallel transport $\tau_{\gamma}^t:T^*_{\gamma(0)} \to T^*_{\gamma(t)}$ on the cotangent spaces
is defined by $\tau_{\gamma}^{t}(p)(\cdot) := p(\tau^{-t}_{\gamma} (\cdot))$
for $p \in T^{*}_{\gamma(0)}$.
For a vector/covector field $V = (V_x)_{x \in {\cal M}}$, 
the covariant derivative $\nabla_{u} V$ by $u \in T_x$ 
is given 
by $\nabla_{u}V := 
(d/dt)\mid_{t = 0} \tau_{\gamma}^{-t} V_{\gamma(t)}$, where $\gamma$ is any curve 
with $\gamma(0) = x$ and $\dot\gamma(0) = u$.
The length of a curve $\gamma:[0,\ell] \to {\cal M}$ is defined as $\int_{0}^{\ell} \|\dot \gamma(t)\|dt$.
The distance $d(x,y)$ of $x,y \in {\cal M}$ is defined as the infimum of the length of a curve connecting $x,y$.
By completeness, the infimum is attained
by a {\em geodesic}, which is a curve $\gamma$ with $\nabla_{\dot \gamma(t)}\dot\gamma = 0$.

\subsection{Hadamard manifold}

A {\em Hadamard manifold} is a simply-connected and complete Riemannian manifold 
having nonpositive sectional curvature.
Examples of Hadamard manifolds include 
Euclidean space, hyperbolic space, 
and the manifold of positive definite matrices (Section~\ref{subsec:psd}).
In a Hadamard manifold ${\cal M}$, 
a geodesic connecting two points is uniquely determined 
(up to affine rescaling).
For $u \in T_x$, there is a unique geodesic ray
$\gamma:[0,\infty) \to {\cal M}$ 
such that $\gamma(0) = x$ and $\dot\gamma(0) = u$. 
We denote $\gamma(t)$ particularly by $\exp_x tu$.
If $\|u\| =1$, then it is called a unit-speed geodesic ray. 
The exponential map $u \mapsto \exp_x u$ is a diffeomorphism from $T_x$ to ${\cal M}$.
The parallel transport along the geodesic from $x$ to $y$ 
is simply denoted by $\tau_{x\to y}$.

A function $f: {\cal M} \to \RR$ is said to be {\em geodesically convex} 
if $f \circ \gamma$ is convex for any geodesic $\gamma:[0,1] \to {\cal M}$:
\[
f(\gamma(t)) \leq (1-t) f(\gamma(0)) + t f(\gamma(1)) \quad (t \in [0,1]).
\]
If $f$ is twice differentiable, 
then the geodesic convexity is equivalent to the property that $(d^2/dt^2)(f\circ \gamma)(t) \geq 0$ for any geodesic $\gamma$. 
This condition is rephrased that 
the {\em covariant Hessian} $(u,v) \mapsto (\nabla df)(u,v) := (\nabla_{u} df)(v)$  
is positive semidefinite for every point; see \cite[Theorem 11.23]{Boumal_Book}.

Two unit-speed geodesic rays $\gamma,\gamma'$ are said to be {\em asymptotic} 
if $\sup_{t \in \RR_{\geq 0}} d(\gamma(t),\gamma'(t)) < + \infty$.
The asymptotic relation is an equivalence relation.
The set ${\cal M}^{\infty}$ of equivalence classes of all unit-speed rays is called the {\em boundary of {\cal M} at infinity}; see \cite[Chapter II.8]{BridsonHaefliger1999}.
We consider its {\em Euclidean cone} $C{\cal M}^{\infty} := ({\cal M}^{\infty} \times \RR_{\geq 0})/\sim$, 
where $(\xi,r) \sim (\xi',r')$ if and only if $r=r'=0$ 
or $(\xi,r) =(\xi',r')$.
Fix arbitrary $x_0 \in {\cal M}$.
Then, it is known that $u \mapsto (t \mapsto \exp_{x_0}t u) = ((t \mapsto \exp_{x_0}t u/\|u\|), \|u\|) $ 
is a bijection from $T_{x_0}$ to $C{\cal M}^{\infty}$. 
This induces a topology on $C{\cal M}^{\infty} \simeq T_{x_0}$, called the {\em cone topology}, which is independent of $x_0$. 
The boundary cone $C{\cal M}^{\infty}$ also has a geodesic metric determined 
from the {\em Tits metric} on ${\cal M}^{\infty}$; see~\cite[Chapter II.9]{BridsonHaefliger1999}.
This metric gives a different topology from the cone topology but makes $C{\cal M}^{\infty}$ a {\em Hadamard space}---a metric-space generalization 
of Hadamard manifolds and enjoys the unique geodesic property.  
So we can consider geodesic convexity on the space $C{\cal M}^{\infty}$; 
see e.g., \cite{Hirai_Hadamard2022}.

Let $f:{\cal M} \to \RR$ be a geodesically convex function. 
To study the behavior of $f$ at the boundary $C{\cal M}^{\infty}$ of ${\cal M}$,
define
the {\em recession function} $f^{\infty} = f_{x_0}^{\infty}:C{\cal M}^{\infty} \to \RR \cup \{\infty\}$ of $f$~\cite{Hirai_Hadamard2022,KLM2009JDG} by
\begin{equation}
f^{\infty}(\xi) := \lim_{t \to \infty}\frac{f(\exp_{x_0} t\xi)- f(x_0)}{t} \quad (\xi \in C{\cal M}^{\infty} \simeq T_{x_0}), 
\end{equation}
where $f^{\infty}$ is independent of $x_0$. That is, 
if $t \mapsto \exp_x t u$ and $t \mapsto \exp_y t v$ represent the same point in $C{\cal M}^{\infty}$, 
then $f^{\infty}_{x}(u) = f^{\infty}_{y}(v)$. 
It is known~\cite{Hirai_Hadamard2022} that $f^{\infty}$ is geodesically convex on the  Hadamard space $C{\cal M}^{\infty}$.

To define norm-like functions $Q$ that behave well under parallel transports,
we will utilize the notion of 
the {\em (restricted) Holonomy group} 
\[
Hol_{x_0}({\cal M}) := \{\tau_\gamma^{1} \in SO(T_{x_0}) 
\mid \mbox{$\gamma:[0,1] \to {\cal M}:\gamma(0)=\gamma(1) =x_0$: piecewise loop at $x_0$}\},
\]
where $SO(T_{x_0})$ denotes the rotation group keeping the norm $\|\cdot\|$ invariant.
See e.g., \cite[Chapter III.6]{Sakai1996} for holonomy groups. 
The holonomy group $Hol_{x_0}({\cal M})$ also acts on $T_{x_0}^*$ 
by $(\tau_{\gamma}^1,p) \mapsto \tau_{\gamma}^1 p (= p(\tau_\gamma^{-1}(\cdot)))$.
The holonomy group is the product of those for de Rham factors of ${\cal M}$.
The {\em Berge holonomy theorem}~\cite[Theorem 10.3.4]{Petersen} implies 
that if ${\cal M}$ is irreducible, then 
the holonomy group acts transitively on the sphere of $T_x$ 
or ${\cal M}$ is a symmetric space with rank at least $2$.

\subsection{Convex analysis (on Hermitian matrices)}\label{subsec:convex-analysis}
We will utilize convex analysis on a finite-dimensional real vector space ${\cal V} (\simeq \RR^n)$; 
see e.g.,\cite{Rockafellar}. 
Let $\overline{\RR} := \RR \cup \{\infty\}$. 
A function $F: {\cal V} \to \overline{\RR}$ is said to 
be convex if $\lambda F(u) + (1- \lambda)F(v) \geq F(\lambda u + (1- \lambda) v)$
for $u,v \in {\cal V}$ and $\lambda \in [0,1]$.
Let $\dom F := \{u \in {\cal V}: F(u) < \infty\}$ denote the domain of $F$.
For $r \in \RR$, let $\lv(F,r) := \{u \in {\cal V}: F(u) \leq r\}$ denote the level set of $F$.
For a subset $C \subseteq {\cal V}$, let $\delta_C:{\cal V} \to \{0,\infty\}$ 
denote the indicator function defined by $\delta_C(u) := 0$ if $u \in C$ and $\infty$ otherwise.  
For a norm $\|\cdot\|$  and $v \in {\cal V}$, let 
$B_{\| \cdot \|}(v,r) := \{u \in {\cal V} \mid \|u-v\|\leq r\}$ 
denote the closed ball with respect to $\|\cdot\|$.

We (formally) consider the Legendre-Fenchel conjugate 
$F^*:{\cal V}^* \to \overline{\RR}$ in the dual space ${\cal V}^*$ of ${\cal V}$
as
\[
F^*(p) := \sup_{u \in {\cal V}} p(u) - F(u) \quad (u \in {\cal V}^*). 
\]
If $F$ is lower-semicontinuous convex, then ${F}^{**} = F$. 
The subdifferential $\partial F(u)$ of $F$ at $u \in {\cal V}$ is given by
\begin{eqnarray}
\partial F(u) &:= & \{ p \in {\cal V}^*  \mid F(v) \geq F(u) + p(v-u)\ (\forall p \in {\cal V})\} \nonumber \\
&=& \{ p \in {\cal V}^*  \mid F(u) + F^*(p) = p(u)\}. \label{eqn:subdifferential}
\end{eqnarray}
An element $p$ in $\partial F(u)$ is called a subgradient of $F$ at $u$.
If $F$ is differentiable at $u$, then $\{dF_u\} = \partial F(u)$.

In this paper, convex analysis on the space of Hermitian matrices (Lewis~\cite{Lewis1996}) 
will play important roles.
For a square complex matrix $A$, let $A^{\dagger}$ denote the conjugate transpose of $A$. 
For $\lambda \in \RR^n$, let $\diag \lambda$ denote the diagonal matrix 
having diagonals $\lambda_1,\lambda_2,\ldots,\lambda_n$ in order.  
Let ${\cal H}_{n}$ denote the real vector space of $n \times n$ Hermitian matrices.
The dual space ${\cal H}_n^*$ is identified with ${\cal H}_n$ by $X(Y) := \trace XY$. 
A convex function $F:{\cal H}_n \to \overline{\RR}$ 
is said to be {\em unitarily invariant} 
if $F(k X k^{\dagger}) = F(X)$ 
for any $X \in {\cal H}_n$ and unitary matrix $k \in U_n$.
In this definition, the unitary group
$U_n$ may be replaced by the special unitary group $SU_n$.  
A convex function $f:\RR^n \to \overline{\RR}$ is said to be {\em symmetric} if $f(x) = f(\sigma x)$ 
for any $x \in \RR^n$ and permutation matrix $\sigma$.
For a symmetric convex function $f$, 
the function $F_f:{\cal H}_n \to \overline{\RR}$ defined by 
$F_f(k (\diag \lambda) k^{\dagger}) := kf(\lambda)k^\dagger$ $(k \in SU_n,\lambda \in \RR^n)$ is unitarily invariant convex.
Conversely, 
any unitarily invariant convex function is written in this way~\cite{Davis1957}.
The conjugate $F_f^*$ is also unitarily invariant, and is equal to $F_{f^*}$~\cite{Lewis1996}.  
A subgradient of $F$ at $X$ can be computed by that of $f$ at $\lambda$; see \cite[Section 3]{Lewis1996} for details.
Specifically, if $F$ is differentiable, 
then $dF_X = k^{\dagger} (\diag df_{\lambda}) k$ for $\lambda \in \RR^n$, $k \in U_n$ with
$X= k (\diag \lambda) k^\dagger$.
These arguments are naturally extended to convex functions on
the product of ${\cal H}_{n_1},{\cal H}_{n_2},\ldots {\cal H}_{n_k}$ that are invariant 
under the action $X_i \mapsto k X_i k^{\dagger}$ of the unitary group $U_{n_i}$
for each component ${\cal H}_{n_i}$.
\begin{Ex}[Unitarily invariant norms $\|\cdot\|_{\rm op}$ and $\|\cdot \|_{\rm tr}$]\label{ex:norm}
The operator norm $\| \cdot \|_{\rm op}$ (the maximum of singular values) on ${\cal H}_n$
is unitarily invariant convex, and written as $F_{\|\cdot\|_{\infty}}$ 
for the $\ell_\infty$-norm $\|\cdot \|_{\infty}$ on $\RR^n$.
The trace norm $\|\cdot\|_{\rm tr}$ (the sum of singular values)
is unitarily invariant convex, and written as 
$F_{\|\cdot\|_{1}}$ for the $\ell_1$-norm $\|\cdot \|_{1}$.
Their conjugates are given by 
$(\| \cdot \|_{\rm op})^* = \delta_{B_{\|\cdot\|_{\rm tr}}(0,1)}$ and
$(\| \cdot \|_{\rm tr})^* = \delta_{B_{\|\cdot\|_{\rm op}}(0,1)}$.
The Frobenius norm $\|\cdot\|_{\rm F}$ is unitarily invariant convex, 
is written as $F_{\|\cdot\|_2}$ for the $\ell_2$-norm $\|\cdot \|_{2}$, where $(\| \cdot \|_{\rm F})^* = \delta_{B_{\|\cdot\|_{\rm F}}(0,1)}$.
\end{Ex}
Let ${\cal H}_n^+ \subseteq {\cal H}_n$ denote the set of $n \times n$ positive semidefinite matrices, and
let ${\cal D}_n^+ \subseteq {\cal H}_n^+$ denote the set of positive semidefinite matrices 
$X$ with $\trace X = 1$, i.e., {\em density matrices}.
%
\begin{Ex}[von Neumann entropy]\label{ex:entropy}
The {\em von Neumann entropy} $H: {\cal H}_n \to \RR \cup \{-\infty\}$ is defined by
\[
H(X) := - \trace X \log_2 X 
\]
for $X \in {\cal D}_n^+$ and $-\infty$ for $X \not \in {\cal D}_n^+$,
where $\log_2 X$ is the matrix logarithm defined by $X = 2^{\log_2 X}$.
For $X = k (\diag \lambda) k^{\dagger}$ with $k \in U_n$, $\lambda \in \RR^n$, it holds $H(X) = - \sum_{i=1}^n \lambda_i \log_2 \lambda_i$ if $\lambda \in \RR^n_{\geq 0}$ with $\sum_{i=1}^n \lambda_i =1$ and $-\infty$ otherwise, 
which is the {\em Shannon entropy} of the probability vector $\lambda$.
In particular, $-H$ is unitarily invariant convex.
The conjugate of the negative of the Shannon entropy is given by 
$x \mapsto \log_2 \sum_{i=1}^n 2^{x_i}$.
Therefore, the conjugate $(-H)^*$ of the negative of the von Neumann entropy is given by
\[
(-H)^*(Y) = \log_2 \trace 2^{Y} \quad (Y \in {\cal H}^*_n = {\cal H}_n).
\]
\end{Ex}

\subsection{Manifold ${\cal P}_n$ of positive definite matrices}\label{subsec:psd}

Let ${\cal P}_n \subseteq {\cal H}_n$ 
denote the set of positive definite $n \times n$ 
Hermitian matrices. It is an open set of ${\cal H}_n$ and 
has a natural manifold structure, where 
the tangent space $T_x$ at each point $x \in {\cal P}_n$ 
is naturally identified with ${\cal H}_n$.
The general linear group $GL_n(\CC)$ acts transitively on ${\cal P}_n$ 
as $(g,x) \mapsto g x g^{\dagger}$, where the isotropy subgroup at $I$ 
is the unitary group $U_n$.
The Riemannian metric $\langle,\rangle$  
on ${\cal P}_n$ is defined by $\langle X,Y \rangle := \trace x^{-1}X x^{-1}Y$ for $X,Y \in T_x$, which
which is invariant under this action. 
The resulting Riemannian manifold ({\em PD-manifold}) is a well-studied and representative example 
of Hadamard manifolds. 
Various Riemannian notions are written explicitly; see e.g., \cite[Chapter 6]{Bhatia_PositiveDefiniteMatrices} and \cite[Chapter II.10]{BridsonHaefliger1999}. 
For example, 
any geodesic is written as $t \mapsto g e^{t H} g^\dagger$ for $g \in GL_n(\CC)$ and 
$H \in {\cal H}_n$, where $e^{\bullet}$ denotes the matrix exponential.
In particular, a geodesic issuing at $x$ is written as 
$t \mapsto x^{1/2} e^{t x^{-1/2} H x^{-1/2}} x^{1/2}$ for $H \in {\cal H}_n$.
The parallel transport $\tau_{x \to I}:T_x \to T_I$ is given by
$\tau_{x \to I}H = x^{-1/2}Hx^{-1/2}$.
The asymptotic relation is computed via eigenvalue decomposition and QR-factorization as follows:
For $H \in T_x$, 
consider a geodesic ray $\gamma(t) = \exp_x t H 
= x^{1/2} e^{tx^{-1/2}Hx^{-1/2}}x^{1/2} = g e^{t \diag \lambda} g^{\dagger}$ for $g \in GL_n(\CC)$ and $\lambda \in \RR^n$ with $\lambda_1 \geq \lambda_2 \geq \cdots \geq \lambda_n$.
Observe\footnote{$d(b e^{t \diag \lambda}b^{\dagger}, e^{t \diag \lambda}) = 
d(y(t)y(t)^{\dagger}, I)
$ for $y(t) := e^{-t\diag \lambda/2}b e^{t \diag \lambda/2}$ that is bounded over $t \in \RR_{\geq 0}$.
}  
that $t \to b e^{t \diag \lambda} b^{\dagger}$ for upper-triangular $b$
is asymptotic to $t \mapsto e^{t \diag \lambda}$.
Via QR factorization $g = k b$ for unitary $k \in U_n$ and upper-triangular $b \in GL_n(\CC)$, 
geodesic $\gamma$ is asymptotic to the geodesic ray $t \mapsto e^{tk\diag \lambda k^{\dagger}}$
issuing at $I$.

The submanifold ${\cal P}_n^1 := \{ x \in {\cal P}_n \mid \det x = 1\}$ of ${\cal P}_n$
is totally geodesic in the sense that any geodesic in ${\cal P}_n$ containing any two points in ${\cal P}_n^1$ 
belongs to ${\cal P}_n^1$.
The special linear group $SL_n(\CC)$ acts isometrically and transitively on ${\cal P}^1_n$ 
as $(g,x) \mapsto g x g^{\dagger}$, where the isotropy subgroup at $I$ 
is the special unitary group $SU_n$. So ${\cal P}_n^1$ 
is the irreducible symmetric space $SL_n(\CC) / SU_n$, whereas 
${\cal P}_n = GL_n(\CC)/U_n$ is a reducible symmetric space 
admitting de Rham decomposition ${\cal P}_n = \RR \times {\cal P}_n^1$. 
%
The tangent space of ${\cal P}_n^1$ at $I$ is 
given by $T_I{\cal P}_n^1 = \{ H \in {\cal H}_n \mid \trace H = 0\}$.
Consider the holonomy group $Hol_I({\cal P}_n^1)$ at $I$.
It is known (see e.g., \cite[Chapter IV, Theorem 6.9, Remark 6.14]{Sakai1996}) that the holonomy group 
of a simply-connected irreducible symmetric space
is given by the adjoint action of the isotopy subgroup.
Therefore, $Hol_I({\cal P}_n^1)$ is equal to $SU_n$ acting on
$T_I {\cal P}_n^1$ as $(k,H) \mapsto k H k^{\dagger}$.
This is equal to the holonomy group $Hol_I({\cal P}_n)$ of ${\cal P}_n$ at $I$ 
since the holonomy group 
is the product of that of irreducible components.

\paragraph{Euclidean building $C{\cal P}_n^{\infty}$.}
It is well-known \cite[Chapter II.10]{BridsonHaefliger1999} that the boundary $C{\cal P}_n^{\infty}$ 
has the structure of {\em Euclidean building}, which is a representative example of non-manifold Hadamard spaces.
We give a quick and informal introduction of this fact.
A point in our building ${\cal B}_n$ is represented by a pair of
a complete flag $\{0\} < {\cal Y}_1 < {\cal Y}_2 < \cdots < {\cal Y}_n = \CC^n$ 
of vector subspaces and nonincreasing weight 
$\lambda \in \RR^n:\lambda_1 \geq \lambda_2 \geq \cdots \geq \lambda_n$, 
where $U \leq U'$ (resp. $U < U'$) means 
that vector space $U$ is a (resp. proper) subspace of vector space $U'$. 
Then ${\cal B}_n$ consists 
of (equivalence classes of) all such pairs, 
and is metrized so that for any ordered basis $u_1,u_2,\ldots,u_n$ of $\CC^n$, the map
\begin{equation}\label{eqn:expression}
\RR^n \ni \lambda \mapsto 
\begin{array}{c}
\lambda_{i_1} \geq \lambda_{i_2}\geq \cdots \geq \lambda_{i_n}, \\
\langle u_{i_1}\rangle < \langle u_{i_1},u_{i_2} \rangle < \cdots <  \langle u_{i_1},u_{i_2},\ldots,u_{i_n} \rangle 
\end{array}
\end{equation}
is an isometry from $\RR^n$ to ${\cal B}_n$,  
where all $i_1,i_2,\ldots,i_n$ are different, and
$\langle u_{i_1},u_{i_2},\ldots u_{i_k} \rangle$ denotes the vector subspace spanned by $u_{i_1},u_{i_2},\ldots,u_{i_k}$.
An {\em apartment} is a subset of ${\cal B}_n$ that is the image of such an isometry. 

The asymptotic class of a geodesic ray $t \mapsto g e^{t \diag \lambda}g^{\dagger}$ 
with nonincreasing $\lambda$ in ${\cal P}_n$
gives rise to such an expression 
$\lambda_1 \geq \lambda_2 \geq \cdots \geq \lambda_n$, 
$\langle u_{1} \rangle < \langle u_{1},u_{2} \rangle < \cdots <  \langle u_{1},u_{2},\ldots,u_{n}\rangle$,
where $u_i$ is the $i$-th column vector of $g$.
This expression is indeed independent of the choice of representatives, 
and provides an isometry from $C{\cal P}_n^{\infty}$ to ${\cal B}_n$. 
We can identify ${\cal H}_n (= T_I{\cal P}_n \simeq C{\cal P}_n^{\infty})$ 
with ${\cal B}_n$  
by associating $H \in {\cal H}_n$ 
with geodesic ray $t \mapsto e^{tH} = k e^{t \diag \lambda} k^{\dagger}$ with $k \in U_n$ and $\lambda \in \RR^n$.
We here note a link between unitarily invariant convexity  
and geodesic convexity. 
\begin{Lem}\label{lem:ui-convex=>g-convex}
In this identification ${\cal H}_n = {\cal B}_n$, 
any unitarily invariant convex function $f:{\cal H}_n \to \overline{\RR}$ 
is geodesically convex on ${\cal B}_n$. 
\end{Lem}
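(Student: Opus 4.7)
My plan is to reduce geodesic convexity on ${\cal B}_n$ to ordinary convexity inside a single apartment, and then invoke the building-theoretic fact that every geodesic in ${\cal B}_n$ is contained in some apartment.

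First, I would apply the theorem of Davis recalled in Section~\ref{subsec:convex-analysis}: a unitarily invariant convex $f$ is of the form $f = F_g$ for a symmetric convex function $g:\RR^n \to \overline{\RR}$, so that $f(k \diag \lambda k^{\dagger}) = g(\lambda)$ for all $k \in U_n$ and $\lambda \in \RR^n$. Thus the value of $f$ at a Hermitian matrix depends only on its eigenvalues taken as an unordered tuple, which is precisely the piece of data surviving in the apartment-coordinate description (\ref{eqn:expression}).

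Next I would identify each apartment of ${\cal B}_n$ with a Euclidean subspace of ${\cal H}_n$ on which $f$ restricts to $g$. Fix an orthonormal basis $u_1,\ldots,u_n$ of $\CC^n$ and let $k\in U_n$ be the unitary with columns $u_j$. Comparing (\ref{eqn:expression}) with the QR-based asymptotic computation of Section~\ref{subsec:psd}, the point of the corresponding apartment $A$ with weight $\lambda\in\RR^n$ corresponds, under ${\cal H}_n = {\cal B}_n$, to the Hermitian matrix
\[
H_\lambda \ := \ k \diag \lambda \, k^{\dagger} \ = \ \sum_{j=1}^{n} \lambda_j\, u_j u_j^{\dagger},
\]
and $\lambda \mapsto H_\lambda$ is an isometric linear map $\RR^n \to {\cal H}_n$. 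In particular $f(H_\lambda) = g(\lambda)$, so the restriction $f|_A$ is convex (being the pull-back of the convex function $g$ along an affine isometry).

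Finally, I would invoke the standard property of Euclidean buildings (see e.g.\ \cite[Chapter II.10]{BridsonHaefliger1999}) that any two points of ${\cal B}_n$ are contained in a common apartment $A$, and the unique geodesic joining them stays inside $A$. Given any geodesic $\gamma:[0,1]\to {\cal B}_n$, I pick such an apartment; in its $\RR^n$-coordinates, $\gamma$ is the affine segment $t \mapsto (1-t)\lambda^{0} + t\lambda^{1}$, whence $f(\gamma(t)) = g((1-t)\lambda^{0} + t\lambda^{1})$ is convex in $t$ by convexity of $g$. This establishes geodesic convexity of $f$ on ${\cal B}_n$. The principal obstacle is geometric rather than analytic: one must match the ``complete flag plus weight'' model of ${\cal B}_n$ given by (\ref{eqn:expression}) with the intrinsic notion of apartment in the building $C{\cal P}_n^\infty$, so that the two-points-in-a-common-apartment axiom can be applied. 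Once this identification is in hand, the convexity assertion reduces to the elementary fact that a symmetric convex function on $\RR^n$ is convex along every line segment.
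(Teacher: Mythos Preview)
Your approach is the same as the paper's, but there is a genuine gap in the execution. In your second paragraph you claim to handle ``each apartment'' but then immediately restrict to an \emph{orthonormal} basis $u_1,\ldots,u_n$; for such an apartment the identification with the linear slice $\{k\diag\lambda\,k^{\dagger}\mid\lambda\in\RR^n\}\subseteq{\cal H}_n$ is indeed correct and $f|_A=g$. The problem is that the common apartment provided by the building axiom for two arbitrary points of ${\cal B}_n$ is in general \emph{not} of this form: two Hermitian matrices $H_1,H_2$ lie in a common orthonormal apartment ${\cal E}_k$ if and only if they commute. For non-commuting $H_1,H_2$ the common apartment comes from a non-orthonormal basis $v_1,\ldots,v_n$, and under the identification ${\cal H}_n={\cal B}_n$ such an apartment is \emph{not} a linear subspace of ${\cal H}_n$ (the unitary $k$ produced by QR depends on the Weyl chamber), so your sentence ``identify each apartment of ${\cal B}_n$ with a Euclidean subspace of ${\cal H}_n$'' is false as stated.

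The repair is short and is exactly what the paper does in its appendix. First one checks (via Gram--Schmidt/QR, as you already invoke) that for \emph{any} flag-plus-weight representative $(\lambda,{\cal Y}_\bullet)$ of a point $\xi\in{\cal B}_n$, the corresponding Hermitian matrix has eigenvalue vector $\lambda$, hence $f(\xi)=g(\lambda)$ regardless of whether the underlying basis is orthonormal. Equivalently, the paper builds an explicit isometry $\rho:{\cal A}\to{\cal E}_k$ from a general apartment to an orthonormal one (replace $v_j$ by the Gram--Schmidt $u_j$) and observes $f=f\circ\rho$ on ${\cal A}$; convexity on ${\cal A}$ then follows from convexity on ${\cal E}_k$. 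Your first paragraph already contains the needed observation (``the value of $f$ depends only on the eigenvalues''), but you never connect it to the general-apartment case in paragraph three; doing so closes the argument.
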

The proof is given in the appendix, since it only gives insights in this paper.
The building for $C({\cal P}_n^1)^{\infty}$ is obtained from the above expression by 
adding $\sum_{i=1}^n \lambda_i = 0$, 
where isometry (\ref{eqn:expression}) is from $\{ x\in \RR^n \mid \sum_{i=1}^n x_i = 0\}$.
It is an isometric subspace of ${\cal B}_n$.

\section{$Q$-gradient flow and asymptotic duality}\label{sec:Q-gradient_flow}

Let ${\cal M}$ be a Hadamard manifold, and 
let $Q:T^*{\cal M} \to \overline{\RR}$ be a function
on the cotangent bundle of ${\cal M}$. 
We are going to study the minimum $Q$-gradient-norm problem~(\ref{eqn:minimum_Q-gradient-norm_problem}).
Since the setting is too general, we begin to formulate a meaningful special class of $({\cal M},Q)$.
For $x \in {\cal M}$, let $Q_x := Q|_{T_x^*}$ 
denote the restriction of $Q$ to $T_x^*$.
The Legendre-Fenchel conjugate $Q^*:T{\cal M} \to \overline{\RR}$ of $Q$ is defined 
so that for each $x \in {\cal M}$, its restriction $(Q^*)_x$ to $T_{x}$ is equal to $Q_x^*$.
We then consider the following properties for $({\cal M},Q)$: 
\begin{itemize}
\item[Q1:] For each $x \in {\cal M}$, the restriction $Q_x$ is 
a $\RR_{\geq 0}$-valued convex function,  
$Q_x^2$ is continuously differentiable, 
and the level sets of $Q_x$ are bounded.
\item[Q2:] $Q$ is invariant under parallel transports; $Q(\tau_\gamma^1 p) = Q(p)$ 
for any $p \in T_x^*$, $x \in {\cal M}$ and curve $\gamma:[0,1] \to {\cal M}$ with $\gamma(0) = x$.
\item[Q3:] For $x,y \in {\cal M}$ and $u \in T_x$, $v \in T_y$, if 
$t \mapsto \exp_x tu$ and $t \mapsto \exp_y t v$ are asymptotic, then
$Q_x^*(-u) = Q_y^*(-v)$ holds. Define 
$Q_{-}^*:C{\cal M}^{\infty} \to \overline{\RR}$ by 
$Q_{-}^{*}(\xi) := Q^*_x(- u)$, where $t \mapsto \exp_{x}tu$ is the representative of $\xi \in C{\cal M}^{\infty}$ at $T_x$.
\end{itemize}
Note that  
$Q^2_x$ is also convex in Q1.
If Q2 holds, then $Q^*$ is also invariant under any parallel transport, since
$Q_{\gamma(1)}^*(\tau_\gamma^1 u) =  \sup_{p \in T_{\gamma(1)}^*} p(\tau_{\gamma}^1u) - Q_{\gamma(1)}(p) = \sup_{p \in T_{\gamma(1)}^*} (\tau_{\gamma}^{-1} p)(u) - Q_{\gamma(0)}(\tau_\gamma^{-1}p) =  Q_{\gamma(0)}^*(u)$.
It is important to note that Q2 can be formulated in terms of the holonomy group:
\begin{itemize}
\item[Q2$'$:] For some {\em $Hol_{x_0}({\cal M})$-invariant function} $Q_0:T_{x_0}^* \to \overline{\RR}$, it holds
 \[
    Q(p) = Q_0(\tau_{x \to x_0} p) \quad (p \in T_x^*, x \in {\cal M}), 
    \]
    where $Hol_{x_0}({\cal M})$-invariance means that $Q_0$ satisfies
    \[
    Q_0(k\cdot p) = Q_0(p) \quad (\forall p \in T_{x_0}^*, \forall k \in Hol_{x_0}({\cal M})).
    \]
\end{itemize}
It is clear that Q2 implies Q2$'$.
Conversely, If $Q$ satisfies Q2$'$, then,
for $\gamma:[0,1] \to {\cal M}$ with $\gamma(0) = x$ and $\gamma(1) = y$.
we have
    \[
    Q(\tau_\gamma^{1} p ) = Q_0(\tau_{y \to x_0} \circ \tau_\gamma^{1} p) = Q_0(\tau_{y \to x_0} \circ \tau_\gamma^{1} \circ \tau_{x_0 \to x} \circ \tau_{x \to x_0}  p) = Q_0(\tau_{x \to x_0} p) =Q(p). 
    \]

\begin{Ex}[Euclidean space]
    Suppose that ${\cal M} = \RR^n$. 
    Then $T_x$, $T_x^*$, and $C{\cal M}^{\infty}$ are identified with $\RR^n$, 
    where all parallel transports are the identity map.  
    Any convex function 
    $Q:\RR^n \to \RR_{\geq 0}$ for which it has bounded level sets and $Q^2$ is differentiable 
    gives rise to $Q:T^*{\cal M} \to \RR_{\geq 0}$ satisfying Q1-3. 
\end{Ex}

\begin{Ex}[Dual norm]
    The dual norm $Q := \|\cdot\|^*:T^*{\cal M} \to \RR_{\geq 0}$ 
    satisfies Q1-3. 
    Indeed, the conjugate is given by $Q^*_x(u) = \delta_{B_{\|\cdot\|}(0;1)}$. 
    The norm $\|\cdot\|$ is invariant under parallel transports, 
    and gives the speed $\|H\|$ of the geodesic ray $t \mapsto \exp_x t H$ if 
    $Q^*$ is viewed as a function on $C{\cal M}^{\infty}$.
    From this, we see Q2 and Q3. 
    More generally, 
    consider any nondecreasing convex function $h:\RR_{\geq 0} \to \RR_{\geq 0}$, and let $Q:= h(\|\cdot\|^*)$. Then it holds $Q^* = h^*(\|\cdot\|)$ for the conjugate $h^*(y) := \sup_{t \geq 0} yt - h(t)$ that is also nondecreasing.
    From this, we see Q1-3. 
    If the holonomy group acts transitively on the sphere, 
    then $Q$ is necessarily written in this way.
\end{Ex}

\begin{Ex}[PD-manifold and unitarily invariant convex function]\label{ex:PD-manifold}
Consider the PD-manifold ${\cal P}_n$. 
Choose a unitarily invariant convex function 
$Q_0:{\cal H}_n \to \RR_{\geq 0}$ such that $Q_0^2$ is continuously differentiable, and regard it as $T_I^* \to \RR_{\geq 0}$. 
Then it is a $Hol_I({\cal P}_n)$-invariant function.
Define $Q:T^* {\cal P}_n \to \RR$ by 
\[
Q_x(Y) := Q_0(\tau_{x \to I} Y) \quad (Y \in T^*_x{\cal P}_n). 
\]
Then, $Q$ satisfies Q1 and Q2.
To see Q3, choose $H \in T_x$ and 
suppose that $x^{-1/2}Hx^{-1/2} = u (\diag \lambda) u^{\dagger}$ for $u \in U_n$ and $\lambda \in \RR^n$.
Then $Q_x^*(- H) = Q_0^*(- \diag \lambda)$. On the other hand, 
as explained in Section~\ref{subsec:psd}, 
geodesic $t \mapsto \exp_x t H$ 
is asymptotic to $t \mapsto \exp_I t k (\diag \lambda) k^{\dagger}$ for some $k \in U_n$.
Namely, $H \in T_x$ and $k (\diag \lambda) k^{\dagger} \in T_I$ represent the same point in $C{\cal M}^{\infty}$.
Then $Q^*_x(-H) = Q^*_0 (- \diag \lambda) = Q^*_I(- k (\diag \lambda) k^{\dagger})$. 
This implies Q3.
This construction is generalized to the product of PD-manifolds, 
more generally, symmetric spaces of nonpositive curvature.
\end{Ex}
As seen in these examples, the following generally holds.
\begin{Prop}\label{prop:Q2=>Q3} 
{\rm Q2} implies {\rm Q3}.
\end{Prop}
The proof relies on the Berge holonomy theorem and properties of symmetric spaces, 
given in the appendix.

Let $f: {\cal M} \to \RR$ be a 
twice differentiable geodesically convex function.
Consider the minimum $Q$-gradient-norm problem (\ref{eqn:minimum_Q-gradient-norm_problem}) 
for ${\cal M},Q,f$.
Our goal is to establish an extension of (\ref{eqn:min_gradient-norm})
to this setting. We start with the weak duality that generalizes \cite[Lemma 2.2]{HiraiSakabe2024FOCS}.
\begin{Lem}[Weak duality]\label{lem:weakduality}
Suppose that $Q$ satisfies {\rm Q3}. Then it holds
\begin{equation}\label{eqn:weakduality}
\inf_{x \in {\cal M}} Q (df_x) \geq \sup_{\xi \in C{\cal M}^{\infty}} - f^{\infty}(\xi) - Q_{-}^*(\xi). 
\end{equation}
\end{Lem}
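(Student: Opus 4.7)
The plan is to establish the inequality pointwise: for every choice of $x \in {\cal M}$ and every boundary point $\xi \in C{\cal M}^{\infty}$, I will show
$$Q(df_x) \geq - f^{\infty}(\xi) - Q_-^{*}(\xi),$$
after which taking the infimum over $x$ on the left and the supremum over $\xi$ on the right (which decouple) delivers (\ref{eqn:weakduality}). Fix such $x$ and $\xi$, and let $u \in T_x$ be the unique representative of $\xi$ at $x$, i.e.\ the tangent vector for which the ray $t \mapsto \exp_x tu$ defines $\xi$ in $C{\cal M}^{\infty}$.

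The first step is the Fenchel--Young inequality applied on the single fiber $T_x^{*}$. By the very definition of $Q_x^{*}$ we have $Q_x(p) + Q_x^{*}(v) \geq p(v)$ for all $p \in T_x^{*}$ and $v \in T_x$; specializing to $p = df_x$ and $v = -u$ yields
$$Q(df_x) \geq -\, df_x(u) - Q_x^{*}(-u).$$
The second step controls each of the two terms on the right using the given hypotheses. Geodesic convexity of $f$ forces $t \mapsto f(\exp_x tu)$ to be convex on $[0,\infty)$, and hence its right derivative at $0$ is no larger than its asymptotic slope:
$$df_x(u) \;=\; \frac{d}{dt}\bigg|_{t=0}\! f(\exp_x tu) \;\leq\; \lim_{t \to \infty} \frac{f(\exp_x tu) - f(x)}{t} \;=\; f^{\infty}(\xi),$$
where the last equality uses the base-point independence of the recession function recorded earlier. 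For the remaining term, property Q3 is tailor-made to identify $Q_x^{*}(-u)$ with the boundary function: $Q_x^{*}(-u) = Q_{-}^{*}(\xi)$. Substituting both bounds into the Fenchel--Young line produces the desired pointwise inequality and completes the argument.

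I do not expect a serious obstacle here; the proof is really the three-line assembly of convexity of $f$ along geodesics, Fenchel--Young on a single cotangent fiber, and the asymptotic invariance Q3. The only small bookkeeping point is that each $\xi \in C{\cal M}^{\infty}$ has a canonical tangent-vector representative at every base point $x$ (because the exponential map of a Hadamard manifold is a diffeomorphism, so asymptotic unit rays at different base points are in bijection via their initial directions), and Q3 is precisely the hypothesis that turns the $x$-dependent quantity $Q_x^{*}(-u)$ into the well-defined boundary functional $Q_-^{*}(\xi)$ needed on the right-hand side.
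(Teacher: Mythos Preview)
Your proof is correct and follows essentially the same approach as the paper: both combine the Fenchel--Young inequality on a single cotangent fiber, the monotonicity of difference quotients from geodesic convexity of $f$, and property Q3 to identify $Q_x^*(-u)$ with $Q_-^*(\xi)$. The only difference is cosmetic ordering---the paper starts from $f^{\infty}(\xi)$ and chains down to $-Q_x(df_x) - Q_x^*(-\xi)$, while you start from $Q(df_x)$ and chain up---but the ingredients and logic are identical.
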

\begin{proof} From the definition of $f^{\infty} = f_x^{\infty}$, it holds
\begin{eqnarray*}
f_x^{\infty}(\xi) &= &\lim_{t \to \infty} \frac{f(\exp_x t\xi) - f(x)}{t} \geq \lim_{t \to 0} \frac{f(\exp_x t\xi) - f(x)}{t} \\
& = &  df_x(\xi) = - df_x(- \xi) \geq - Q_x(df_x) - Q_x^*(-\xi), 
\end{eqnarray*}
where the first inequality follows from convexity of $f$ and the second inequality
follows from the Fenchel-Young inequality
$Q_x(p) + Q_x^*(- \xi) \geq p(-\xi)$
for $p= df_x \in T_x^*$ and $- \xi \in T_x$.
By Q3, we have the claim.
\end{proof}

In the case of Euclidean space, the strong duality holds.
\begin{Ex}
Consider ${\cal M} = \RR^n$ and a lower semicontinuous convex function $Q:\RR^n \to \overline{\RR}$. 
Then $C{\cal M}^{\infty}$ is identified with $\RR^n$, and
$f^{\infty}$ is the support function of the closure $C$ of $\{ df_x  \mid x \in \RR^n\} \subseteq (\RR^n)^*$ 
(see e.g.,\cite[Section 3.3]{HiraiSakabe2024FOCS}).
From the Fenchel duality~\cite[Section 31]{Rockafellar}, it holds
\[
    \inf_{x \in \RR^n} Q(df_x) = \inf_{p \in C} Q(p) = \inf_{p \in (\RR^n)^*} Q(p) + \delta_{C}(p)     = \sup_{\xi \in \RR^n} - Q^*(- \xi) - f^{\infty}(\xi).
\]
\end{Ex}
We next introduce a generalization of the gradient flow $x(t)$ that  is expected to
attain the equality in (\ref{eqn:weakduality}) by the limit.
From now on, we assume that $({\cal M}, Q)$ satisfies Q1, Q2, and Q3.
\begin{Def}[$Q$-gradient flow]\label{def:Q-gradient}
The {\em $Q$-gradient} $\nabla^{Q} f(x)$ at $x \in {\cal M}$ is defined by
\begin{equation*}
\nabla^{Q} f(x) := \frac{1}{2}d Q^{2}_{df_x}\mid_{T_x^*}\quad  (\in T_x^{**} = T_x).  
\end{equation*}
The {\em $Q$-gradient flow} is the solution $x(t)$ of the initial value ODE:
\begin{equation}\label{eqn:Q-gradient_flow}
\dot x(t) = - \nabla^{Q} f(x(t)), \quad x(0) = x_0.
\end{equation}
\end{Def}

\begin{Ex}
Suppose that $Q = \|\cdot\|^*$. From $Q(df_x) = \|df_x\|^* = \|\nabla f(x)\|$, 
we see that the $Q$-gradient and the ordinary gradient are the same:
$\nabla^{Q} f(x) = \nabla f(x)$.
Accordingly, the $Q$-gradient flow is the ordinary gradient flow of $f$. 
\end{Ex}

Consider the $Q$-gradient flow $x(t)$ in (\ref{eqn:Q-gradient_flow}). 
By Q1 and the Cauchy-Peano existence theorem for ODE, 
such an $x(t)$ indeed exists and is defined on $[0,T)$ for some $T> 0$.
The following property generalizes the well-known monotonicity property of
$t \mapsto \|\nabla f(x(t))\|$ for the ordinary gradient flow $x(t)$.
\begin{Lem}\label{lem:nonincreasing}
$t \mapsto Q(df_{x(t)})$ is monotonically nonincreasing.
\end{Lem}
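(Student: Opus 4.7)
The plan is to show that $\frac{d}{dt} Q(df_{x(t)})^2 \leq 0$ pointwise; since $Q$ takes nonnegative values by Q1, monotonicity of $t \mapsto Q(df_{x(t)})$ then follows. The immediate obstacle is that the cotangent vector $p(t) := df_{x(t)}$ lives in the moving fiber $T_{x(t)}^*$, so ordinary differentiation is not directly available. I would handle this by using Q2 to pull things back to a fixed vector space. Fix $t_0$, let $\gamma(s) := x(t_0+s)$, and set
\[
g(s) := \tau_{\gamma}^{-s}\, p(t_0+s) \in T_{x(t_0)}^*.
\]
Since $Q$ is invariant under parallel transport by Q2, so is $Q^2$, and hence $Q(p(t_0+s))^2 = Q_{x(t_0)}\!\bigl(g(s)\bigr)^2$. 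By the definition of the covariant derivative recalled in Section~\ref{sec:preliminaries}, $g'(0) = \nabla_{\dot x(t_0)} df \in T_{x(t_0)}^*$.

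Since $Q_x^2$ is continuously differentiable by Q1 and $s \mapsto g(s)$ is $C^1$ (because $f\in C^2$ and $x(\cdot)\in C^1$ by the ODE), the ordinary chain rule yields
\[
\frac{d}{dt}\bigg|_{t_0} Q(df_{x(t)})^2 = d\bigl(Q_{x(t_0)}^2\bigr)_{df_{x(t_0)}}\!\bigl(\nabla_{\dot x(t_0)} df\bigr).
\]
By the definition of the $Q$-gradient, the differential $d(Q_{x(t_0)}^2)_{df_{x(t_0)}}$, viewed as an element of $(T_{x(t_0)}^*)^{*}=T_{x(t_0)}$, equals $2\nabla^{Q}f(x(t_0))$. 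The $Q$-gradient flow ODE gives $\nabla^{Q}f(x(t_0)) = -\dot x(t_0)$, so the pairing above becomes
\[
-2\,\bigl(\nabla_{\dot x(t_0)} df\bigr)\!\bigl(\dot x(t_0)\bigr) = -2\,(\nabla df)\bigl(\dot x(t_0),\dot x(t_0)\bigr),
\]
which is the covariant Hessian of $f$ evaluated on $(\dot x(t_0),\dot x(t_0))$. Because $f$ is geodesically convex, $\nabla df$ is positive semidefinite (as recalled after the definition of geodesic convexity), so this quantity is $\leq 0$.

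Overall the computation is short and follows the familiar monotonicity argument for the ordinary gradient norm along gradient flow. The only place where care is needed is invoking Q2 to legitimately transport $Q^2$ back to the fixed fiber $T_{x(t_0)}^*$ before applying the chain rule; this step is precisely what the bundle invariance is designed for, and it is essential since Q1 only furnishes continuous differentiability fiberwise, not jointly on $T^*{\cal M}$.
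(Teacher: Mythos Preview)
Your proof is correct and follows essentially the same argument as the paper: pull back $Q^2$ to a fixed cotangent fiber via Q2, differentiate using the definition of the covariant derivative, apply the chain rule and the $Q$-gradient flow equation to obtain $-2(\nabla df)(\dot x,\dot x)$, and conclude by geodesic convexity of $f$. The paper's proof is slightly terser but the logical steps are identical.
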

\begin{proof}
We show $(d/dt) Q^{2}(df_{x(t)}) \leq 0$. Let $\tau^{-{\delta}}$ denote the parallel transport along the curve $x(\cdot)$ from $t+\delta$ to $t$. Then it holds 
\[
 \frac{d}{d\delta}\mid_{\delta = 0} Q^{2}(df_{x(t+ \delta)}) = \frac{d}{d\delta}\mid_{\delta = 0} Q^2(\tau^{- \delta } df_{x(t+\delta )})  = d Q^2_{df_{x(t)}}( \nabla_{\dot x(t)} df) = - 2 \nabla df_{x(t)}(\dot x(t),\dot x(t)) \leq 0,
\]
where the first equality follows from Q2,  
the second from the chain rule and the definition of covariant derivative,  
the third from $d Q_{df_{x(t)}}^2 = - 2\dot x(t)$, 
and the last inequality from convexity of~$f$.
\end{proof}

\begin{Lem}\label{lem:upperbound}
For a constant  $\alpha \geq 0$ it holds 
$
d(x(s),x(t)) \leq \alpha |s-t|. 
$ 
\end{Lem}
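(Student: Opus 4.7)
The plan is to derive the Lipschitz bound from a uniform upper bound on the speed $\|\dot x(t)\|$ of the flow. Once I have $\|\dot x(t)\| \leq \alpha$ for every $t$ in the interval of existence, the definition of the Riemannian distance as the infimum of lengths of curves yields
\[
d(x(s),x(t)) \;\leq\; \int_s^t \|\dot x(r)\|\, dr \;\leq\; \alpha\, |s-t|,
\]
so the whole task reduces to producing a uniform bound on $\|\nabla^{Q} f(x(t))\|$.

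To obtain the uniform speed bound, I would first invoke Lemma \ref{lem:nonincreasing}: setting $r := Q(df_{x_0})$, the flow satisfies $Q(df_{x(t)}) \leq r$ for every $t$. Parallel-transporting along the geodesic from $x(t)$ to $x_0$, the covector $\tilde p(t) := \tau_{x(t) \to x_0} df_{x(t)} \in T_{x_0}^*$ satisfies $Q_{x_0}(\tilde p(t)) = Q(df_{x(t)}) \leq r$ by Q2. Hence $\tilde p(t)$ lies in the sublevel set $L := \{p \in T_{x_0}^* : Q_{x_0}(p) \leq r\}$, which by Q1 is bounded and thus has compact closure.

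The next step is to pull $\nabla^{Q} f$ back to the single fiber $T_{x_0}^*$. By the equivalent formulation Q2$'$, the restriction $Q_x^2$ is precisely the pullback of $Q_{x_0}^2$ along the linear isometry $\tau_{x \to x_0} : T_x^* \to T_{x_0}^*$. Differentiating both sides at $df_x$ gives
\[
\tfrac12\, dQ^{2}_{df_x}\big|_{T_x^*} \;=\; \tau_{x_0 \to x}\!\left(\tfrac12\, dQ^{2}_{\tilde p}\big|_{T_{x_0}^*}\right),
\]
where I use that cotangent parallel transport is defined as the dual of tangent parallel transport and that the latter is an isometry, so the Riemannian identifications $T^{**} \cong T$ commute with parallel transport. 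Since tangent parallel transport preserves $\|\cdot\|$, it follows that $\|\nabla^{Q} f(x(t))\|_{x(t)} = \tfrac12 \|dQ^{2}_{x_0}|_{\tilde p(t)}\|_{x_0}$.

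By Q1, $Q_{x_0}^2$ is $C^1$, so its differential is a continuous $T_{x_0}$-valued map on $T_{x_0}^*$ and therefore bounded on the compact set $\overline{L}$. Taking $2\alpha$ to be any such bound, we conclude $\|\dot x(t)\| \leq \alpha$ for all $t$, and the Lipschitz estimate follows by integration as indicated above. The one technical point that requires care is the parallel-transport equivariance of the $Q$-gradient used in the third paragraph; although it is essentially tautological given Q2 and the isometry property of Levi-Civita parallel transport, it is precisely what allows the whole argument to reduce to boundedness of a continuous function on a compact set in a single vector space.
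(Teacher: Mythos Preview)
Your proof is correct and follows essentially the same route as the paper: use Lemma~\ref{lem:nonincreasing} and Q2 to trap the transported differential $\tau_{x(t)\to x_0}df_{x(t)}$ in a bounded sublevel set of $Q_0$, bound the norm of $d(Q_0^2/2)$ on its compact closure by continuity (Q1), then integrate to get the Lipschitz estimate. If anything, you spell out the parallel-transport equivariance of the $Q$-gradient more carefully than the paper does.
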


\begin{proof}
The velocity of $x(t)$ is given by
\[
\|\dot x(t)\| =  \| d(Q^2_{df_{x(t)}}/2)\mid_{T_x^*}\| = \| d(Q^2_{0}/2)_{\tau_{x(t) \to x_0}df_{x(t)}}\|.
\] 
By the previous lemma, $t \mapsto Q(df_{x(t)}) = Q_0(\tau_{x(t) \to x_0} df_{x(t)})$ is monotonically nonincreasing.
Therefore, $\tau_{x(t) \to x_0} df_{x(t)}$ belongs to the level set 
$\lv(Q_0, Q_0(df_{x_0}))$. Since it is compact by Q1,
the maximum $\alpha$ of $p \mapsto \| d(Q^2_0/2)_p\|$ over $p \in \lv(Q_0, Q_0(df_{x_0}))$ exists.
Then we have
$
\|\dot x(t)\| \leq  \alpha$.
By the triangle inequality, we have
$d(x(s), x(t)) \leq \int_{s}^t \|\dot x(r)\|dr \leq \alpha |s-t|.$
\end{proof}

\begin{Lem}\label{lem:completeness}
The vector field $(- \nabla^{Q}f(x))_{x\in {\cal M}}$ is complete.
\end{Lem}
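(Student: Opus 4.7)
The plan is to use the uniform velocity bound from Lemma~\ref{lem:upperbound} to show that any solution curve is Cauchy on any bounded existence interval, and then invoke metric completeness of the Hadamard manifold to extend the solution past any prospective finite blow-up time.

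More concretely, I would start from the local existence of $x(t)$ guaranteed by Cauchy--Peano (the vector field is continuous because $Q_x^2$ is $C^1$ by Q1 and $f$ is twice differentiable). Let $[0,T)$ be the maximal interval of existence of a solution $x(t)$ of the initial value problem (\ref{eqn:Q-gradient_flow}). Suppose for contradiction that $T < \infty$. Crucially, the bound
\[
d(x(s), x(t)) \leq \alpha \, |s-t| \qquad (s, t \in [0,T))
\]
from Lemma~\ref{lem:upperbound} holds with one fixed constant $\alpha$ on the \emph{entire} existence interval, because its proof relies only on the monotonicity $Q(df_{x(t)}) \leq Q(df_{x_0})$ (Lemma~\ref{lem:nonincreasing}) together with compactness of the level set $\lv(Q_0, Q_0(df_{x_0}))$ given by Q1. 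Consequently, for any sequence $t_n \uparrow T$, the points $x(t_n)$ form a Cauchy sequence in ${\cal M}$.

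Since ${\cal M}$ is a Hadamard manifold, it is metrically complete, so $x(t_n)$ converges to some $x^* \in {\cal M}$ which, again by the Lipschitz bound, does not depend on the chosen sequence; thus $\lim_{t\uparrow T} x(t) = x^*$. Now I would restart the ODE at $x^*$: by Cauchy--Peano applied at $x^*$, a solution $y(s)$ exists on some interval $[0,\varepsilon)$ with $y(0) = x^*$. Concatenating $x$ on $[0,T)$ with the translate $t \mapsto y(t - T)$ on $[T, T+\varepsilon)$ yields a solution on $[0, T+\varepsilon)$, contradicting maximality of $T$. Hence $T = \infty$. The same argument applied with reversed time (or using the maximal interval $(-T^-,T^+)$) gives completeness in both directions, though only forward completeness is needed for the subsequent asymptotic analysis.

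The main (and only serious) obstacle is ensuring that the velocity bound is genuinely \emph{uniform} over $[0,T)$ rather than only local. This is precisely what the combination of Q1 (compactness of sublevel sets of $Q_0$) and the monotonicity Lemma~\ref{lem:nonincreasing} delivers: the parallel-transported differential $\tau_{x(t)\to x_0}\, df_{x(t)}$ is trapped in a fixed compact subset of $T_{x_0}^*$ for all $t$ in the existence interval, which bounds $\|\dot x(t)\|$ uniformly. Every other step is routine continuation of ODE solutions in a complete metric space.
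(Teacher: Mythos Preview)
Your proposal is correct and follows essentially the same argument as the paper: use the uniform Lipschitz bound from Lemma~\ref{lem:upperbound} to show that $x(t)$ is Cauchy as $t\uparrow T$, invoke metric completeness of ${\cal M}$ to obtain a limit $x^*$, and restart the flow at $x^*$ via Cauchy--Peano to contradict maximality. Your write-up is more explicit about the uniformity of $\alpha$ over $[0,T)$ and about the concatenation step, but the structure and key ideas are identical.
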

\begin{proof}
It suffices to show that 
$\lim_{t \to T} x(t)$ exists in ${\cal M}$ if $T < \infty$. 
By the previous lemma, 
for every sequence $(t_i)_{i=1,2,\ldots}$ in  $[0,T)$ converging to $T$, 
the sequence $(x(t_i))_{i=1,2,\ldots}$ in  ${\cal M}$ is Cauchy.
By the completeness of ${\cal M}$, the limit of the sequence exists, which must be $\lim_{t \to T} x(t) := x^*$. 
\end{proof}
Thus, $x(t)$ is defined on $\RR_{\geq 0}$.
Next, we show the energy identity of the $Q$-gradient flow. 
\begin{Lem}[Energy identity]\label{lem:energy_identity}
It holds
\begin{equation}\label{eqn:energy_identity}
f(x(t)) - f(x_0) =   -\int_{0}^t \frac{1}{2} Q^2(df_{x(s)}) + \left(\frac{1}{2}Q^2\right)^* (- \dot x(s)) ds. 
\end{equation}
If $Q(df_x) > 0$ for every $x \in {\cal M}$, then
\begin{equation}\label{eqn:energy_identity_modified}
  f(x(t)) - f(x_0) + \int_0^{t} Q^* \left( -\frac{\dot x(s)}{Q(df_{x(s)})} \right) Q(df_{x(s)}) ds = - \int_0^t Q^2(df_{x(s)})ds. 
\end{equation}
\end{Lem}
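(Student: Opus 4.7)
The plan is to reduce both identities to pointwise Fenchel--Young equalities along the flow, then integrate. Throughout, set $p_s := df_{x(s)}$ and $w_s := -\dot x(s) \in T_{x(s)}$.

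For the first identity, I would start from the fundamental theorem of calculus
\[
f(x(t)) - f(x_0) = \int_0^t \frac{d}{ds} f(x(s))\, ds = \int_0^t df_{x(s)}(\dot x(s))\, ds.
\]
By Q1, the function $\tfrac{1}{2}Q_x^2$ on $T_x^*$ is a $C^1$ convex function, so its subdifferential at $p_s$ equals the singleton $\{d(\tfrac{1}{2}Q^2)_{p_s}\}$. By Definition~\ref{def:Q-gradient} of the $Q$-gradient flow,
\[
w_s = -\dot x(s) = \tfrac{1}{2} dQ^2_{p_s}\big|_{T_{x(s)}^*} = d\bigl(\tfrac{1}{2}Q_{x(s)}^2\bigr)_{p_s}.
\]
Applying the subgradient--conjugate equality (\ref{eqn:subdifferential}) to the convex function $\tfrac{1}{2}Q_{x(s)}^2$ on $T_{x(s)}^*$ and the pair $(p_s, w_s)$ gives
\[
\tfrac{1}{2}Q^2(p_s) + \bigl(\tfrac{1}{2}Q^2\bigr)^*(w_s) = w_s(p_s) = -df_{x(s)}(\dot x(s)).
\]
Integrating this against $s \in [0,t]$ and comparing with the integral expression for $f(x(t))-f(x_0)$ yields (\ref{eqn:energy_identity}).

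For (\ref{eqn:energy_identity_modified}) under the hypothesis $Q(df_x) > 0$ for all $x$, I would observe that since $t \mapsto \sqrt{t}$ is smooth on $(0,\infty)$ and $Q_x^2$ is $C^1$, the function $Q_x = \sqrt{Q_x^2}$ is itself $C^1$ at any covector $p$ with $Q(p) > 0$, with
\[
dQ_p = \frac{1}{2Q(p)}\, dQ^2_p.
\]
Hence $w_s = Q(p_s)\, dQ_{p_s}$ and therefore $w_s/Q(p_s) = dQ_{p_s}$. Applying the Fenchel--Young equality (\ref{eqn:subdifferential}) now to the convex function $Q_{x(s)}$ itself at the differentiable point $p_s$, I obtain
\[
Q(p_s) + Q^*\bigl(dQ_{p_s}\bigr) = dQ_{p_s}(p_s).
\]
Combining the two equalities gives the pointwise relation
\[
\bigl(\tfrac{1}{2}Q^2\bigr)^*(w_s) = w_s(p_s) - \tfrac{1}{2}Q^2(p_s) = Q(p_s)\, dQ_{p_s}(p_s) - \tfrac{1}{2}Q^2(p_s) = \tfrac{1}{2}Q^2(p_s) + Q(p_s)\, Q^*\!\left(\frac{w_s}{Q(p_s)}\right).
\]
Substituting this into (\ref{eqn:energy_identity}) and rearranging produces (\ref{eqn:energy_identity_modified}).

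The whole argument is essentially bookkeeping around two Fenchel--Young equalities, so there is no deep analytic obstacle; the main point requiring care is the passage from $C^1$-regularity of $Q^2$ to $C^1$-regularity of $Q$ on the set $\{Q > 0\}$, which is what makes the second identity require the hypothesis $Q(df_x) > 0$ (whereas the first identity needs only Q1 through the differentiability of $\tfrac{1}{2}Q^2$).
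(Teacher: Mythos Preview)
Your proof is correct and follows essentially the same route as the paper: both identities are obtained by recognizing that the flow velocity $-\dot x(s)$ is the (sub)gradient of $\tfrac12 Q^2$ (respectively $Q$ rescaled) at $df_{x(s)}$, invoking the Fenchel--Young equality pointwise, and integrating. The only cosmetic differences are that you justify $w_s/Q(p_s)\in\partial Q(p_s)$ via $C^1$-regularity of $Q=\sqrt{Q^2}$ on $\{Q>0\}$ whereas the paper phrases this as $d(Q^2/2)_p\in Q(p)\,\partial Q(p)$, and you derive \eqref{eqn:energy_identity_modified} by substituting into \eqref{eqn:energy_identity} rather than recomputing $\tfrac{d}{ds}f(x(s))$ directly; both are equivalent.
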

\begin{proof}
If $v =d(Q_x^2/2)_p$ for $p \in T_x^*,v \in T_x$, then by (\ref{eqn:subdifferential}) it holds
\begin{equation}\label{eqn:p(v)}
p(v) = \frac{1}{2}Q_x^2(p) + \left( \frac{1}{2}Q^2_x\right)^*(v).
\end{equation}
Therefore, for the setting $v := - \dot x(t)$ and $p := df_{x(t)}$, 
it holds $v = d(Q_x^2/2)_{p}$ by Definition~\ref{def:Q-gradient}. 
From (\ref{eqn:p(v)}), we have
\begin{equation*}
\frac{d}{dt}f(x(t)) =  df_{x(t)}(\dot x(t)) =  -\frac{1}{2} Q^2(df_{x(t)}) - \left(\frac{1}{2}Q^2\right)^* (-\dot x(t)).
\end{equation*}
By integrating it from $0$ to $t$, we have 
(\ref{eqn:energy_identity}).

We show the latter part.
If $v = d(Q_x^2/2)_p \in Q_x(p) \cdot \partial (Q_x)(p)$ and $Q_p(p) > 0$,
then 
it holds $v/Q_x(p) \in  \partial (Q_x) (p)$, and 
$
 p(v/Q_x(p)) = Q_x(p)  + Q_x^*(v/Q_x(p))$ by (\ref{eqn:subdifferential}), 
that is,
\[
p(v) = Q^2_x(p) + Q_x(p) Q_x^{*}\left(\frac{v}{Q_x(p)}\right).
\]
Considering the setting $p := df_x$ 
and $v := -\dot x(t)$, we have (\ref{eqn:energy_identity_modified}) as above.
\end{proof}



For $t \in \RR_{\geq 0}$, define $R(t) \in \RR_{\geq 0}$ and $u(t) \in T_{x_0}$ by
    \begin{equation*}
     R(t) := \int_0^t Q(df_{x(s)})ds, \quad x(t) = \exp_{x_0} R(t) u(t).
    \end{equation*}
\begin{Lem}
Suppose that $\inf_{x\in {\cal M}} Q(df_{x}) > 0$. Then the following hold:
\begin{itemize}
\item[(1)] $(u(t))_{t \in \RR_{\geq 0}}$ is bounded in $T_{x_0}$.
\item[(2)] For $t \in \RR_{>0}$ and $r \in [0, R(t)]$, it holds
\begin{equation}\label{eqn:important}
\frac{f(\exp_{x_0} r u(t)) - f(x_0)}{r} +  \frac{1}{R(t)} 
\int_0^{t} Q^* \left( -\frac{\dot x(s)}{Q(df_{x(s)})} \right) Q(df_{x(s)}) ds 
\leq - \lim_{t \to \infty} Q(df_{x(t)}).
\end{equation}
\end{itemize}
\end{Lem}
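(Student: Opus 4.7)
The plan is to derive both parts by combining three already-established ingredients: the Lipschitz bound on $x(t)$ from Lemma~\ref{lem:upperbound}, the monotonicity of $t \mapsto Q(df_{x(t)})$ from Lemma~\ref{lem:nonincreasing}, and the energy identity (\ref{eqn:energy_identity_modified}) from Lemma~\ref{lem:energy_identity}. Write $m := \inf_{x \in {\cal M}} Q(df_x) > 0$, which exists and is positive by hypothesis, and $q^{*} := \lim_{t \to \infty} Q(df_{x(t)})$, which exists by monotonicity and satisfies $q^{*} \geq m > 0$.

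For part (1), I would observe that $Q(df_{x(s)}) \geq m$ on $[0,t]$, so
\[
R(t) = \int_0^t Q(df_{x(s)})\, ds \geq m t.
\]
By Lemma~\ref{lem:upperbound}, $d(x_0, x(t)) \leq \alpha t$ for some constant $\alpha$. Since $\exp_{x_0}: T_{x_0} \to {\cal M}$ is a diffeomorphism on the Hadamard manifold ${\cal M}$, the identity $x(t) = \exp_{x_0} R(t)u(t)$ gives $\|u(t)\| = d(x_0, x(t))/R(t) \leq \alpha/m$ for $t > 0$, which is the required boundedness.

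For part (2), the plan is to divide the energy identity (\ref{eqn:energy_identity_modified}) by $R(t) > 0$. On the right-hand side, monotonicity gives $Q(df_{x(s)}) \geq q^{*}$ for every $s$, hence
\[
\int_0^t Q^2(df_{x(s)})\, ds \;\geq\; q^{*} \int_0^t Q(df_{x(s)})\, ds \;=\; q^{*} R(t),
\]
so after division this term is bounded above by $-q^{*}$. On the left-hand side, geodesic convexity of $f$ along the geodesic $s \mapsto \exp_{x_0}(s\, u(t))$ implies that the slope function $r \mapsto (f(\exp_{x_0} r u(t)) - f(x_0))/r$ is nondecreasing in $r$, so for every $r \in (0, R(t)]$,
\[
\frac{f(\exp_{x_0} r u(t)) - f(x_0)}{r} \;\leq\; \frac{f(\exp_{x_0} R(t) u(t)) - f(x_0)}{R(t)} \;=\; \frac{f(x(t)) - f(x_0)}{R(t)}.
\]
Substituting this estimate into the divided energy identity yields exactly (\ref{eqn:important}); the boundary case $r = 0$ follows by letting $r \to 0^{+}$, which replaces the slope by the directional derivative $df_{x_0}(u(t))$.

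There is no serious obstacle: the two nontrivial ingredients (the Lipschitz bound and the energy identity) are already available, and the remaining work is the standard convex-slope monotonicity together with the bookkeeping above. The one point that requires care is ensuring $R(t) > 0$ for all $t > 0$, but this is immediate from $R(t) \geq mt$ and the positivity hypothesis $m > 0$.
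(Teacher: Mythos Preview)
Your proof is correct and follows essentially the same approach as the paper: both combine the Lipschitz bound, the monotonicity of $Q(df_{x(t)})$, the energy identity~(\ref{eqn:energy_identity_modified}), and the convex-secant inequality along the geodesic $r\mapsto\exp_{x_0}r u(t)$. The only noteworthy difference is in bounding $\int_0^t Q^2(df_{x(s)})\,ds \big/ R(t)$: the paper uses Cauchy--Schwarz to get $\int Q^2/\!\int Q \geq (1/t)\int Q \geq q^*$, whereas you use the more direct pointwise estimate $Q^2\geq q^* Q$ (from monotonicity) and integrate---your route is slightly more elementary but reaches the identical conclusion.
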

The essence of the inequality (\ref{eqn:important}) is given in the proof of \cite[Theorem 3.1]{HiraiSakabe2024FOCS}.
\begin{proof}
(1). 
By $d(x_0,x(t)) = \|R(t) u(t)\| = R(t)\|u(t)\|$ and
Lemmas~\ref{lem:nonincreasing} and \ref{lem:upperbound}, it holds 
\[
\|u(t)\| = \frac{d(x_0,x(t))}{R(t)} = \frac{1}{\frac{1}{t}\int_{0}^t Q(df_{x(s)})ds}\frac{d(x_0,x(t))}{t} 
\leq \frac{\alpha}{\lim_{t \to \infty} Q(df_{x(t)})} < \infty.  
\]

(2). Consider the geodesic $[0,R(t)] \ni r \mapsto \exp_{x_0} r u(t)$ from $x_0$ to $x(t)$.
By convexity of $f$ along this geodesic, it holds
\begin{equation*}
f(\exp_{x_0} r u(t)) - f(x_0) \leq \frac{r}{R(t)}(f(x(t))- f(x_{0})) \quad (r \in [0,R(t)]).
\end{equation*}
By the energy identity (\ref{eqn:energy_identity_modified}), we have
\begin{equation}\label{eqn:important0}
\frac{f(\exp_{x_0} r u(t)) - f(x_0)}{r} \leq \frac{-1}{R(t)} \left( 
\int_0^{t} Q^* \left( -\frac{\dot x(s)}{Q(df_{x(s)})} \right) Q(df_x(s)) ds + \int_0^t Q^2(df_{x(s)})ds
\right).
\end{equation}
By the Cauchy-Schwarz inequality and Lemma~\ref{lem:nonincreasing}, it holds
\[
- \frac{\int_0^t Q^2(df_{x(s)})ds}{\int_0^t Q(df_{x(s)})ds} \leq -\frac{1}{t}\int_0^t Q(df_{x(s)})ds \leq - \lim_{t \to \infty} Q(df_{x(t)}).
\]
By substituting it for the second term of the RHS in (\ref{eqn:important0}),
we have (\ref{eqn:important}).
\end{proof}
The main result in this section is an extension of (\ref{eqn:min_gradient-norm})
to a basic class of ${\cal M}$:
\begin{Thm}\label{thm:main}
Assume that ${\cal M} = {\cal M}_1 \times {\cal M}_2 \times \cdots \times {\cal M}_d$ where each ${\cal M}_i$ satisfies one of the following
\begin{itemize}
\item[(i)] ${\cal M}_i = \RR^n$. 
\item[(ii)] The holonomy group of ${\cal M}_i$ acts transitively on the sphere.
\item[(iii)] ${\cal M}_i = {\cal P}^1_n$.
\end{itemize}
Suppose $\inf_{x \in {\cal M}} Q(df_x) >0$. Then it holds
    \begin{equation}\label{eqn:main}
  \lim_{t \to \infty}  Q(df_{x(t)}) = \inf_{x \in {\cal M}} Q(df_x) = \sup_{\xi \in C{\cal M}^{\infty}} - f^{\infty}(\xi) - Q^*_{-}(\xi)= - f^{\infty}(u^*) - Q^*_{-}(u^*),
\end{equation}
where $u^*$ is any accumulation point of $(u(t))_{t \in \RR_{\geq 0}}$. 
\end{Thm}
From the proof for (ii), the statement of the theorem holds when $Q$ is written as $Q= h(\|\cdot\|^*)$ (independent of the manifold). 
The proof for (iii) works for the real PD-manifold ${\cal P}^1_{n}(\RR) := {\cal P}^1_n \cap GL_n(\RR) \simeq SL_n(\RR)/SO(n)$.
In the light of the Berge holonomy theorem, 
the remaining case is irreducible symmetric spaces with rank $\geq 2$, other than ${\cal P}^1_{n}$ and ${\cal P}^1_{n}(\RR)$.

The proof follows the approach developed in \cite{Hirai2025,HiraiSakabe2024FOCS}, 
which is a combination of the weak duality (Lemma~\ref{lem:weakduality}) and the inequality (\ref{eqn:important}).
In the case of $Q = \|\cdot\|^*$ (with $Q^*_{-} = \delta_{B_{\|\cdot\|}(0,1)}$), 
the second term of the LHS of (\ref{eqn:important}) vanishes, and 
$u(t)$ belongs to $B_{\|\cdot\|}(0,1)$ by the triangle inequality $d(x_0,x(t)) \leq R(t)$. 
From $t \to \infty$ and $r \to \infty$, we obtain $u^* \in B_{\|\cdot\|}(0,1)$ 
with the desired inequality $f^{\infty}(u^*) \leq - \inf_{x \in {\cal M}} Q(df_x)$ 
(the reverse of the weak duality (Lemma~\ref{lem:weakduality})).
For the general case, however, 
the second term in (\ref{eqn:important}) makes the argument more involved, and 
we were only able to prove the above incomplete result. 
Fortunately, the case (iii) is expected to have enough applications, 
as we will see in Section~\ref{sec:entanglement_polytope}. 
The proof incorporates further nontrivial combinations of convexity (Jensen's inequality),
the Rauch comparison theorem for (ii), and matrix analysis for (iii), 
which may be interesting its own right.

\begin{proof}[Proof of Theorem~\ref{thm:main}]

Case (i). Suppose that ${\cal M} =\RR^n$. From $x(t) -x_0 = R(t)u(t)$, we have
\[
u(t) = \frac{1}{R(t)} \int_{0}^t \dot x(s) ds.
\]
Also, by convexity (Jensen's inequality) of $Q^*$, we have
\[
\frac{1}{R(t)} 
\int_0^{t} Q^* \left( -\frac{\dot x(s)}{Q(df_{x(s)})} \right) Q(df_{x(s)}) ds
\geq Q^* \left( \frac{1}{R(t)}\int_{0}^t - \dot x(s) ds \right) = Q^*(-u(t)).
\]
Therefore, from (\ref{eqn:important}), we have
\begin{equation}\label{eqn:becomes}
\frac{f(\exp_{x_0} r u(t)) - f(x_0)}{r} +  Q^*(-u(t)) 
\leq - \lim_{t \to \infty} Q(df_{x(t)}).
\end{equation}
Choose any subsequence $(t_i)_{i=1,2,\ldots}$ such that 
$t_i \to \infty$ and $u(t_i)$ converges to some $u^* \in T_{x_0}$.  
Since $\inf_{x \in {\cal M}} Q(df_x) >0$, it holds $R(t_i) \to \infty$.
By (\ref{eqn:becomes}), for any $r \in \RR_{\geq 0}$ it holds
\[
\frac{f(\exp_{x_0} r u^*) - f(x_0)}{r} +  Q^*(-u^*) 
\leq - \lim_{t \to \infty} Q(df_{x(t)}).
\]
For $r \to \infty$, we obtain 
\begin{equation}
f^{\infty}(u^*) + Q^*(-u^*) \leq - \lim_{t \to \infty}Q(df_{x(t)}).
\end{equation}
Combining it with the weak duality (Lemma~\ref{lem:weakduality}), we have
\begin{eqnarray*}
\lim_{t \to \infty}Q(df_{x(t)}) &\leq& - f^{\infty}(u^*) - Q^* (-u^*)  \\ 
& \leq& \sup_{\xi \in C{\cal M}^{\infty}} - f^{\infty}(\xi) - Q^*_{-}(\xi)
\leq \inf_{x \in {\cal M}} Q(df_x) \leq \lim_{t \to \infty}Q(df_{x(t)}). 
\end{eqnarray*}
Thus, the equality (\ref{eqn:main}) holds.



Case (ii). Suppose that $Hol_{x_0}({\cal M})$ acts transitively on the sphere. Let $H(t) := R(t)u(t)$.
Then 
\[
u(t) = \frac{1}{R(t)}\int_{0}^t \dot H(s) ds.
\]
By
$x(s) = \exp_{x_0} H(s)$, it holds
\[
\dot x(s) = d(\exp_{x_0})_{H(s)} \dot H(s).
\]
It is well-known that the differential of the exponential map $\exp_{x_0}$
is given by a Jacobi field; see e.g.,~\cite[p. 35]{Sakai1996}.
Specifically, it holds 
\[
d (\exp_{x_0})_{r H(s)} \dot H(s)  = \frac{J_s(r)}{r},   
\]
for the Jacobi field $J_s(r)$ along geodesic $r \mapsto \gamma(r) = \exp_{x_0}r H(s)$ 
with $J_s(0) = 0$ and $\nabla_{\dot \gamma(0)} J_s = \dot H(s)$.
Therefore, 
$\dot x(s) = J_s(1).$
By the Rauch comparison theorem~\cite[Theorem IV. 2.3]{Sakai1996} between $\RR^n$ and ${\cal M}$, it holds
$
\|J_s(r)\| \geq \|r \dot H(s)\|
$, and hence
\[
\|\dot x(s)\| \geq \|\dot H(s)\|.
\]
Since $Q^*$ is written as $Q^* = h^*(\|\cdot\|)$ 
for a nondecreasing convex function $h^*: \RR_{\geq 0} \to \RR$, we have
\[
 Q^* \left( -\frac{\dot x(s)}{Q(df_{x(s)})} \right) = h^*\left( \frac{\|\dot x(s)\|}{Q(df_{x(s)})} \right) \geq  h^*\left( \frac{\|\dot H(s)\|}{Q(df_{x(s)})} \right) =  Q^* \left( -\frac{\dot H(s)}{Q(df_{x(s)})} \right),
\]
and
\begin{eqnarray*}
&& \frac{1}{R(t)} 
\int_0^{t} Q^* \left( -\frac{\dot x(s)}{Q(df_{x(s)})} \right) Q(df_{x(s)}) ds
\geq \frac{1}{R(t)} 
\int_0^{t} Q^* \left( -\frac{\dot H(s)}{Q(df_{x(s)})} \right) Q(df_{x(s)}) ds \\
&& \geq Q^* \left( \frac{1}{R(t)}\int_{0}^t - \dot H(s) ds \right) = Q^*(- u(t)),
\end{eqnarray*}
where the second inequality is Jensen's inequality as above.
Therefore, we have (\ref{eqn:becomes}). 
The rest is the same as in the case (i).

Case (iii). Suppose that ${\cal M} = {\cal P}_n^1$. Since 
the $SL_n(\CC)$-action on ${\cal P}_n^1$ is transitive and naturally extends to $C({\cal P}_n^1)^{\infty}$,
we can assume that $x_0 = I$.  
Specifically, choose $g\in SL_n(\CC)$ with $gg^{\dagger}=x_0$, 
and consider $\tilde f(x) := f(g x g^{\dagger})$ 
and $\tilde Q(Y) := Q((g^{\dagger})^{-1} Y g^{-1})$ instead of $f,Q$.

As in (ii), let $H(t) := R(t)u(t)$. Then $x(t) = e^{H(t)}$ and
\[
u(t) = \frac{1}{R(t)} \int_{0}^t \dot H(s) ds.
\]
\begin{Clm}\label{clm:show}
There are $k_1,k_2,\ldots,k_m \in SU_n$ and $p_1,p_2,\ldots,p_m \in \RR_{>0}$ 
with $\sum_{j=1}^m p_j =1$ such that 
\begin{equation*}
\dot H(s) = \sum_{j=1}^m p_j k_j^{\dagger} (\tau_{x(s) \to x_0} \dot x(s)) k_j. 
\end{equation*}
%
\end{Clm}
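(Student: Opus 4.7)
The plan is to express $\tau_{x(s)\to x_0}\dot x(s)$ as an explicit integral transform of $\dot H(s)$, recognize the inverse transform as an entrywise Schur multiplication in the eigenbasis of $H(s)$, and realize that multiplier as an average of $SU_n$-conjugations using Bochner's theorem on positive-definite functions.

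First I would reduce to $x_0 = I$ by the $SL_n(\CC)$-equivariance already exploited in the surrounding argument. Since $x(s) = e^{H(s)}$, the standard matrix-exponential derivative formula combined with the PD-manifold parallel transport $\tau_{x\to I}Y = x^{-1/2}Yx^{-1/2}$ yields
\[
\tau_{x(s)\to I}\dot x(s) \;=\; \int_{-1/2}^{1/2} e^{-uH(s)}\,\dot H(s)\,e^{uH(s)}\,du.
\]
Diagonalizing $H(s) = V\,\diag(\lambda)\,V^{\dagger}$ with $V\in U_n$ and $\sum_i\lambda_i = 0$, and writing $\tilde{\dot H} := V^{\dagger}\dot H(s)V$, $\tilde A := V^{\dagger}(\tau_{x(s)\to I}\dot x(s))V$, this integral becomes the entrywise identity $\tilde A_{ij} = \phi(\lambda_i-\lambda_j)\tilde{\dot H}_{ij}$ with $\phi(\alpha) := 2\sinh(\alpha/2)/\alpha$ (and $\phi(0) := 1$). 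Since $\phi\geq 1$ everywhere, the reciprocal $\psi(\alpha) := (\alpha/2)/\sinh(\alpha/2)$ gives the inverse relation $\tilde{\dot H}_{ij} = \psi(\lambda_i-\lambda_j)\tilde A_{ij}$.

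The crucial step will be to show that $\psi$ is a positive-definite function on $\RR$. For this I would invoke the Euler product $\sinh(\pi x) = \pi x\prod_{n\geq 1}(1+x^2/n^2)$; with $x = \alpha/(2\pi)$ this gives
\[
\psi(\alpha) \;=\; \prod_{n\geq 1}\frac{(2\pi n)^2}{(2\pi n)^2+\alpha^2},
\]
and each factor is the characteristic function of a symmetric Laplace distribution (each equals $\int_{\RR} \pi n\, e^{-2\pi n|\omega|}e^{i\alpha\omega}\,d\omega$), hence positive-definite. A uniformly convergent product of positive-definite functions is positive-definite, so Bochner's theorem furnishes a probability measure $\mu$ on $\RR$ with $\psi(\alpha) = \int_{\RR}e^{iy\alpha}\,d\mu(y)$.

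Finally I would set $D_y := V\,\diag(e^{-iy\lambda_1},\ldots,e^{-iy\lambda_n})\,V^{\dagger}$, which lies in $SU_n$ because $\sum_i\lambda_i = 0$ forces $\det D_y = 1$. A direct computation in the $V$-basis gives $(D_y^{\dagger}(\tau_{x(s)\to I}\dot x(s))D_y)_{ij} = e^{iy(\lambda_i-\lambda_j)}\phi(\lambda_i-\lambda_j)\tilde{\dot H}_{ij}$, and integrating against $\mu$ together with $\psi\phi = 1$ yields $\int_{\RR}D_y^{\dagger}(\tau_{x(s)\to I}\dot x(s))D_y\,d\mu(y) = \dot H(s)$. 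Thus $\dot H(s)$ lies in the (closed) convex hull of the compact $SU_n$-orbit of $\tau_{x(s)\to I}\dot x(s)$ in ${\cal H}_n$, which is compact in this finite-dimensional space, and Carathéodory's theorem then supplies the required finite convex combination. The main obstacle is the positive-definiteness of $\psi$: without it, the entrywise inversion of the Schur multiplier need not be expressible as a mixture of $SU_n$-conjugations, and the $SU_n$-invariance of $Q_0$ could not be leveraged to push the proof of Case (iii) through.
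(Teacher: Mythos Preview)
Your argument is correct. Both the paper and you arrive at the same Schur-product relation $\tilde{\dot H}_{ij}=\psi(\lambda_i-\lambda_j)\tilde A_{ij}$ with $\psi(\alpha)=(\alpha/2)/\sinh(\alpha/2)$, and both hinge on the positive-definiteness of $\psi$. Where you diverge is in how that positive-definiteness is converted into a convex combination of unitary conjugates. The paper packages the Schur multiplication as a \emph{doubly stochastic} linear map on ${\cal H}_n$ (positive because $S^h\succeq 0$, unital because the diagonal of $S^h$ is all ones), and then invokes a Birkhoff--von~Neumann type lemma for doubly stochastic operators (diagonalize both matrices, build a doubly stochastic scalar matrix, decompose it into permutations) to obtain the finite convex combination. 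You instead feed the positive-definiteness directly into Bochner's theorem to obtain an explicit representation $\psi(\alpha)=\int e^{iy\alpha}\,d\mu(y)$, realize the Schur multiplier as a $\mu$-average of conjugations by the one-parameter family $D_y\in SU_n$ (the $\sum_i\lambda_i=0$ constraint from ${\cal P}_n^1$ ensuring $\det D_y=1$), and finish with Carath\'eodory. Your route is more constructive---it singles out a commuting torus of unitaries rather than arbitrary ones, and it supplies a self-contained proof of the positive-definiteness via the Euler product---while the paper's route is more modular, isolating a reusable lemma about doubly stochastic maps that does not depend on the particular form of $\psi$.
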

We show this claim later.
By this claim, we obtain
\begin{equation}\label{eqn:IEMI}
 Q^* \left( -\frac{\dot x(s)}{Q(df_{x(s)})} \right) \geq Q^*\left( -\frac{\dot H(s)}{Q(df_{x(s)})}  \right) 
\end{equation}
since
\begin{eqnarray}
&& Q^*\left( -\frac{\dot H(s)}{Q(df_{x(s)})}  \right) =  
Q^*\left( \sum_{j=1}^m - p_jk_j^{\dagger}\frac{\tau_{x(s) \to x_0} \dot x(s)}{Q(df_{x(s)})}k_j\right) \leq 
\sum_{j=1}^m p_j Q^*\left( - k_j^{\dagger}\frac{\tau_{x(s) \to x_0} \dot x(s)}{Q(df_{x(s)})}k_j\right) \nonumber
\\
&& = \sum_{j=1}^m p_j Q^*\left( - \frac{\tau_{x(s) \to x_0} \dot x(s)}{Q(df_{x(s)})}\right)  
= Q^* \left( -\frac{\tau_{x(s) \to x_0} \dot x(s)}{Q(df_{x(s)})} \right) = Q^* \left( -\frac{\dot x(s)}{Q(df_{x(s)})} \right),  \label{eqn:as_in}
\end{eqnarray}
where the inequality follows from convexity and the next equality follows from unitary invariance.
Thus we have
\begin{eqnarray*}
&& \frac{1}{R(t)} 
\int_0^{t} Q^* \left( -\frac{\dot x(s)}{Q(df_{x(s)})} \right) Q(df_{x(s)}) ds
\geq \frac{1}{R(t)} 
\int_0^{t} Q^* \left( -\frac{\dot H(s)}{Q(df_{x(s)})} \right) Q(df_{x(s)}) ds \\
&& \geq Q^* \left( \frac{1}{R(t)}\int_{0}^t - \dot H(s) ds \right) = Q^*(- u(t)),
\end{eqnarray*}
where the second inequality is Jensen's inequality as before.
Then we have (\ref{eqn:becomes}), and complete the proof as before.

Case (iv). Suppose that ${\cal M} = {\cal M}_1 \times {\cal M}_2 \times \cdots \times {\cal M}_d$ 
where each  ${\cal M}_i$ satisfies (i), (ii), or (iii).
We write points $x \in {\cal M}$ as $x = (x^1,x^2,\ldots, x^d)$, where $x^i \in {\cal M}_i$. 
Also $T_x{\cal M} = T_{x^1}{\cal M}_1 \oplus \cdots \oplus T_{x^d}{\cal M}_d$.
The holonomy group $Hol_{x}({\cal M})$ is given by the product of 
$Hol_{x^i}({\cal M}_i)$. 
Represent $x(t)$ as $x(t) = \exp_{x_0} H(t) = (\exp_{x_0^1}H^1(t),\ldots, \exp_{x_0^d}H^d(t))$ as above. 
For each $i$ and $v^j \in T_{x_0^j}({\cal M}_j)$ $(j\neq i)$, we can apply the above arguments for $Hol_{x_0^i}({\cal M}_i)$-invariant $Q^*_{i,\{v^j\}}:H \mapsto Q^{*}(v^1,\ldots, v^{i-1}, H, v^{i+1}, \ldots,v^d)$ and obtain 
$Q^{*}_{i,\{v^j\}}(- \dot H^i(s)/Q(df_{x(s)})) 
\leq Q^*_{i,\{v^j\}}(- \dot x^i(s)/Q(df_{x(s)}))$.
From this, we have~(\ref{eqn:IEMI}) for ${\cal M}$. The rest is the same as above.
\end{proof}

We are going to prove Claim~\ref{clm:show}.
We first provide an explicit formula of $\dot H(s)$.
For $h \in \RR^n$, let $S^h = (S^h_{ij})$ be the $n \times n$ matrix defined by
\begin{equation*}
S^h_{ij} := \frac{(h_i-h_j)/2}{\sinh ((h_i-h_j)/2)} \quad (1\leq i,j \leq n).
\end{equation*}
For two matrices $A =(a_{ij})$, $B=(b_{ij})$ of the same size, 
the {\em Hadamard product (Schur product)} $A \odot B$ is defined by 
$(A \odot B)_{ij} := a_{ij}b_{ij}$.

\begin{Lem}[{See \cite[Part II, Lemma 10.40]{BridsonHaefliger1999}}]\label{lem:dotH(s)}
Suppose that  $x(s) = e^{H(s)}$, where 
$H(s) = k \diag h k^\dagger$ for $u \in SU_n$ and $h \in \RR^n$.
Then it holds
\begin{equation}\label{eqn:dotH(s)}
\dot H(s) = k (S^h \odot (k^\dagger \tau_{x(s) \to x_0} \dot x(s) k)) k^{\dagger}.
\end{equation}
\end{Lem}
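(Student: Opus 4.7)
The plan is to reduce the identity to an entrywise calculation in the eigenbasis of $H(s)$. First I would apply the classical differential-of-exponential formula
$$\dot x(s) = \int_0^1 e^{(1-t)H(s)} \dot H(s)\, e^{tH(s)}\, dt.$$
Writing $\tilde A := k^\dagger A k$ for any matrix $A$, conjugation by $k$ diagonalizes $H(s)$, so the integrand becomes entrywise $e^{(1-t)h_i}\, \tilde{\dot H}_{ij}\, e^{t h_j}$ and evaluates to
$$\tilde{\dot x}_{ij} = \phi(h_i,h_j)\, \tilde{\dot H}_{ij}, \qquad \phi(a,b) := \int_0^1 e^{(1-t)a + tb}\,dt = \frac{e^a - e^b}{a-b}$$
(extended continuously by $\phi(a,a) := e^a$). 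This already inverts to express $\dot H(s)$ in terms of $\dot x(s)$ via a Hadamard product; it remains to absorb a parallel-transport factor.

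Next I would use the explicit parallel-transport formula from Section \ref{subsec:psd}: since $x(s) = e^{H(s)}$, one has $\tau_{x(s) \to x_0} Y = e^{-H(s)/2} Y e^{-H(s)/2}$, which in the eigenbasis of $H(s)$ acts entrywise by multiplication by $e^{-(h_i+h_j)/2}$. Combining with the previous step,
$$(k^\dagger\, \tau_{x(s)\to x_0}\dot x(s)\, k)_{ij} = e^{-(h_i+h_j)/2}\,\phi(h_i,h_j)\, \tilde{\dot H}_{ij} = \frac{2\sinh((h_i-h_j)/2)}{h_i-h_j}\, \tilde{\dot H}_{ij},$$
the second equality being a one-line algebraic identity. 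Inverting the scalar factor and recognizing the reciprocal as $S^h_{ij}$, this reads
$$k^\dagger \dot H(s)\, k = S^h \odot \bigl(k^\dagger\, \tau_{x(s)\to x_0}\dot x(s)\, k\bigr),$$
which conjugated back by $k$ is exactly (\ref{eqn:dotH(s)}).

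The computation is essentially a routine symbolic manipulation; the only subtlety is the handling of coincident eigenvalues $h_i = h_j$, where both $\phi$ and the sinh-quotient must be interpreted via their continuous extensions. Since everything is a continuous function of $h$, this causes no trouble, and I do not anticipate any real obstacle beyond careful bookkeeping of the diagonalization.
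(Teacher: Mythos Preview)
Your proof is correct and follows essentially the same route as the paper: both compute the derivative of $e^{H(s)}$ entrywise in the eigenbasis of $H(s)$, apply the parallel-transport factor $e^{-(h_i+h_j)/2}$, and invert the resulting scalar to recover $S^h_{ij}$. The only cosmetic difference is that the paper invokes the operator identity $x^{-1/2}\dot x\,x^{-1/2} = \frac{\sinh(\ad H/2)}{\ad H/2}\,\dot H$ from Bridson--Haefliger, whereas you derive the same entrywise factor directly from the Duhamel integral formula for $de^{H}$; these are equivalent formulations of the same computation.
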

\begin{proof}
Recall the parallel transport on ${\cal P}_n$ is given by 
\[
\tau_{x(s) \to x_0} \dot x(s) = x(s)^{-1/2} \dot x(s) x(s)^{-1/2}.
\]
Then, \cite[Part II, Lemma 10.40]{BridsonHaefliger1999} says that
\[
x(s)^{-1/2} \dot x(s) x(s)^{-1/2} = \frac{\sinh (\ad H(s)/2)}{\ad H(s)/2} \dot H(s),
\] 
where $(\ad H)(X) = [H,X] = HX - XH$ and $\frac{\sinh \ad H}{\ad H} (X):= \sum_{i=0}^\infty (-1)^k \frac{(\ad H)^{2k} (X)}{(2k+1)!}$.   
This formula can also be deduced by Jacobi field calculation 
with curvature tensor $R(X,Y)Z = - [X,Y]Z/4$ at $T_I {\cal P}_n$. 
From $(\ad H(s)) (\dot H(s)) = k ((\ad \diag h) (k^{\dagger}\dot H(s) k))k^\dagger$ 
and $(\ad \diag h) E_{ij} = (h_i- h_j)E_{ij}$, where $E_{ij}$ is the matrix unit, 
we obtain  
\[
(k^{\dagger}\tau_{x(s) \to x_0} \dot x(s)k)_{ij} 
= \frac{\sinh ((h_i-h_j)/2)}{(h_i-h_j)/2} (k^{\dagger} \dot H(s) k)_{ij},
\]
which implies (\ref{eqn:dotH(s)}).
\end{proof}
We combine the above formula (\ref{eqn:dotH(s)}) with methods 
of {\em matrix analysis}~\cite{Bhatia_PositiveDefiniteMatrices}.
Recall that a linear operator $\Psi:\CC^{n\times n} \to \CC^{n \times n}$ is said to be {\em doubly stochastic} if 
\begin{itemize}
    \item[(1)] $\Psi$ is {\em positive}, i.e., $\Psi(A) \in {\cal H}_n^+$ whenever $A \in {\cal H}_n^+$.
    \item[(2)] $\Psi$ is {\em unital}, i.e., $\Psi(I)  = I$.
    \item[(3)] $\Psi^*$ is also unital, i.e., $\Psi^*(I) = I$, 
\end{itemize}
where $\Psi^*$ is defined by $\trace \Psi(A)^{\dagger}B = \trace A^{\dagger} \Psi^*(B)$.

\begin{Lem}\label{lem:doublystochastic1}
For $k \in U_n$ and $h \in \RR^n$, 
the linear operator $\Psi$ defined by $A \mapsto k (S^h \odot (k^{\dagger} Ak)) k^{\dagger}$ 
is doubly stochastic.
\end{Lem}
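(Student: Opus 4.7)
The operator decomposes as $\Psi = C_k \circ \Phi \circ C_{k^\dagger}$, where $C_u(A) := uAu^\dagger$ for $u \in U_n$ and $\Phi(B) := S^h \odot B$. Double stochasticity is manifestly closed under composition: positivity and unitality both compose, and the Hilbert--Schmidt adjoint of a composite is the reverse composite of adjoints, so unitality of the adjoints also composes. It therefore suffices to verify each factor separately. The conjugations $C_k, C_{k^\dagger}$ are immediate: they preserve positivity, fix $I$, and their adjoints $C_{k^\dagger}, C_k$ also fix $I$.

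For $\Phi$, unitality is a direct computation. The diagonal entries of $S^h$ are $\lim_{x \to 0}(x/2)/\sinh(x/2) = 1$, so $\Phi(I) = S^h \odot I = I$. Because $S^h$ is real symmetric, the Hilbert--Schmidt adjoint of $\Phi$ is $\Phi$ itself, so $\Phi^*(I) = I$ as well. The substantive point is positivity: by the Schur product theorem, $\Phi$ maps ${\cal H}_n^+$ into ${\cal H}_n^+$ if and only if $S^h$ itself lies in ${\cal H}_n^+$.

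Thus the heart of the proof is showing $S^h \in {\cal H}_n^+$. My plan is to make the substitution $u_i := e^{h_i}$. Using $(h_i - h_j)/2 = \log \sqrt{u_i/u_j}$ and $\sinh(\log r) = (r - r^{-1})/2$, a short manipulation yields
\[
S^h_{ij} = \frac{\sqrt{u_i u_j}\,(\log u_i - \log u_j)}{u_i - u_j}.
\]
The elementary integral identity $\frac{\log u - \log v}{u - v} = \int_0^\infty \frac{dt}{(u+t)(v+t)}$ then rewrites this as
\[
S^h_{ij} = \int_0^\infty w_t(i)\,\overline{w_t(j)}\, dt, \qquad w_t(i) := \frac{\sqrt{u_i}}{u_i + t},
\]
exhibiting $S^h$ as an integral of rank-one positive semidefinite matrices $w_t w_t^\dagger$; hence $S^h \in {\cal H}_n^+$. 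The limiting case $u_i = u_j$ is handled by continuity.

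The main obstacle, in my view, is recognizing the Löwner-matrix structure hidden inside the hyperbolic sine factor. Once the exponential change of variable is made, the integral representation is elementary and no appeal to Löwner's theorem on operator monotone functions or to Bochner's theorem on positive definite functions is required. Everything else is a routine check against the definition.
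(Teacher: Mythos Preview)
Your proof is correct. The overall structure matches the paper's: decompose $\Psi$ into unitary conjugations and the Schur multiplier $\Phi = S^h \odot (\cdot)$, check unitality via the diagonal of $S^h$, identify $\Phi^* = \Phi$, and reduce positivity of $\Phi$ via the Schur product theorem to $S^h \in {\cal H}_n^+$.

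Where you diverge is in establishing $S^h \in {\cal H}_n^+$. The paper simply cites the fact that $x \mapsto x/\sinh x$ is a positive definite function on $\RR$ (Bhatia, \S 5.2.9), which via Bochner's theorem yields positive semidefiniteness of any matrix of the form $(g(h_i-h_j))_{ij}$. Your route is different and more self-contained: after the substitution $u_i = e^{h_i}$ you recognize $S^h$ as a diagonal congruence of the L\"owner matrix of the logarithm, and then supply the elementary integral $\frac{\log u - \log v}{u-v} = \int_0^\infty \frac{dt}{(u+t)(v+t)}$ to exhibit $S^h$ directly as $\int_0^\infty w_t w_t^\top\, dt$. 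This avoids any appeal to Bochner's theorem or to L\"owner's operator-monotone characterization; the price is a short explicit calculation. Both arguments are clean, but yours makes the paper one external reference lighter at this step.
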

\begin{proof}
(1). It is clear that $A \mapsto u A u^{\dagger}$ for any $u \in U_n$ is positive.
Since $x \mapsto x/\sinh x$ is a positive definite function~\cite[5.2.9]{Bhatia_PositiveDefiniteMatrices},  
it holds $S^h \in {\cal H}_n^+$. 
Therefore $A \mapsto S^h \odot A$
is positive~\cite[Example 2.2.1 (ix)]{Bhatia_PositiveDefiniteMatrices}.
Consequently, the composition $\Psi$ is also positive.

(2) follows from $S^h \odot I = I$.

(3). This follows from the observation  that $\Psi^*$ is given by $B \mapsto k^{\dagger} (S^h \odot (k Bk^{\dagger})) k$; see \cite[Exercise 2.7.11 (ii)]{Bhatia_PositiveDefiniteMatrices}.
\end{proof}

Now Claim~\ref{clm:show} is obtained from the next lemma in the setting of 
$A = \dot H(s)$, $B = \tau_{x(s) \to x_0} \dot x(s)$, $\Psi = k (S^h\odot (k^\dagger (\cdot)  k)) k^{\dagger}$.
\begin{Lem}[{\cite[Exercise 2.7.13]{Bhatia_PositiveDefiniteMatrices}}]\label{lem:doublystochastic2}
Let $A,B \in {\cal H}_n$. Suppose that $A = \Psi(B)$ for some doubly stochastic operator $\Psi$.
Then there are $k_1,k_2,\ldots,k_m \in U_n$ 
and $p_1,p_2,\ldots,p_m \in \RR_{>0}$ with $\sum_{j=1}^mp_j = 1$ such that
\[
A = \sum_{j=1}^m p_j k_j^{\dagger} B k_j.
\]
\end{Lem}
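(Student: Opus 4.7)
The plan is to combine three classical matrix-analytic ingredients: spectral reduction, eigenvalue majorization, and the Horn--Birkhoff decomposition. The underlying idea is that $\Psi$ being doubly stochastic forces $\lambda(A) \prec \lambda(B)$, and the set of Hermitian matrices with eigenvalues majorized by $\lambda(B)$ is exactly the convex hull of $\{ k^{\dagger} B k : k \in U_n\}$.

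First I would reduce to diagonal form. Writing $B = V \diag \beta V^{\dagger}$ for $V \in U_n$ and $\beta = \lambda(B) \in \RR^n$, and replacing $\Psi$ by the still doubly stochastic map $X \mapsto \Psi(V X V^{\dagger})$, we may assume $B = \diag \beta$. Similarly diagonalize $A = W \diag \alpha W^{\dagger}$ with $\alpha = \lambda(A) \in \RR^n$.

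The substantive step is to establish $\alpha \prec \beta$. By Ky Fan's maximum principle,
\[
\sum_{i=1}^k \alpha_i^{\downarrow} \;=\; \max_P \trace(PA),
\]
with the maximum ranging over rank-$k$ orthogonal projections $P$. Setting $T := \Psi^*(P)$, condition (1) gives $T \geq 0$, condition (2) gives $\trace T = \trace(P\Psi(I)) = k$, and condition (3) combined with positivity of $\Psi^*$ gives $T \leq \Psi^*(I) = I$. Hence $T$ has eigenvalues in $[0,1]$ summing to $k$, so
\[
\trace(PA) \;=\; \trace(\Psi^*(P)\, B) \;=\; \trace(TB) \;\leq\; \sum_{i=1}^k \beta_i^{\downarrow}
\]
by a second application of Ky Fan; the case $k = n$ gives equality of traces, completing $\alpha \prec \beta$. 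Invoking the Hardy--Littlewood--P\'olya theorem together with Birkhoff's theorem, there exist weights $p_j \geq 0$ summing to one and permutations $\sigma_j$ with $\alpha = \sum_j p_j P_{\sigma_j} \beta$, hence $\diag \alpha = \sum_j p_j P_{\sigma_j}^{\dagger} B P_{\sigma_j}$. Conjugating by $W$ on both sides yields $A = \sum_j p_j k_j^{\dagger} B k_j$ with $k_j := P_{\sigma_j} W^{\dagger} \in U_n$, as desired.

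The main obstacle is the majorization step: the three clauses of double stochasticity must be deployed in concert, with positivity making $\Psi^*(P)$ nonnegative, unitality of $\Psi$ fixing its trace, and unitality of $\Psi^*$ bounding it above by $I$. This is precisely the content of the Ando--Uhlmann theorem on spectrum contraction under doubly stochastic maps, and is the only ingredient beyond elementary majorization theory; the spectral reduction and Horn--Birkhoff decomposition are then routine.
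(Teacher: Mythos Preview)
Your proof is correct. The overall architecture matches the paper's: reduce to eigenbases via spectral decomposition, produce a scalar doubly stochastic matrix relating the eigenvalue vectors, decompose it via Birkhoff--von Neumann into permutations, and reassemble with unitaries.

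The difference lies in how that doubly stochastic matrix is obtained. You go through majorization: Ky Fan's principle together with the three clauses of double stochasticity gives $\alpha \prec \beta$, and then Hardy--Littlewood--P\'olya supplies \emph{some} doubly stochastic $M$ with $\alpha = M\beta$. The paper instead writes $M$ down explicitly: with $A = u\,\diag\lambda\, u^{\dagger}$ and $B = k\,\diag\mu\, k^{\dagger}$, it sets $\varphi(z)_i := (u^{\dagger}\Psi(k\,\diag z\,k^{\dagger})u)_{ii}$, so that $\varphi(z) = Mz$ for a matrix $M$ whose nonnegativity, row sums, and column sums follow directly from positivity of $\Psi$ and unitality of $\Psi$, $\Psi^*$ respectively; the relation $\lambda = M\mu$ is then immediate from $u^{\dagger}\Psi(B)u = \diag\lambda$. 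The paper's route is slightly more elementary---it bypasses Ky Fan and the HLP existence step altogether---while yours makes the connection to the Ando--Uhlmann spectrum-contraction picture explicit and is the more conceptual of the two.
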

\begin{proof}
Suppose that $A = u \diag \lambda u^{\dagger}$ and $B = k \diag \mu k^{\dagger}$
for $u,k \in U_n$ and $\lambda,\mu \in \RR^n$. 
Define a linear map $\varphi:\RR^n \to \RR^n$ 
by $\varphi(z)_i:= (u^{\dagger} \Psi(k \diag z k^{\dagger}) u)_{ii}$.
Since $\Phi$ is positive, 
$\varphi$ maps $\RR^n_{\geq 0}$ to $\RR^n_{\geq 0}$.
Thus $\varphi$ is written as $\varphi(z) = M z$ for a nonnegative matrix $M$.
Since $\Psi$ and $\Psi^*$ are unital, $M$ is a doubly stochastic matrix.
By the Birkhoff-von Neumann theorem, $M$ is a convex combination $\sum_{j=1}^m p_j \sigma_j$ for permutation matrices $\sigma_1,\sigma_2,\ldots,\sigma_m$.
Thus $\diag \varphi(z) = \sum_{j=1}^m p_j \sigma_j (\diag z) \sigma_j^{\dagger}$.
By the assumption $A = \Psi(B)$, it holds $\varphi(\mu) = \lambda$, and
\[
A = u (\diag \varphi(\mu)) u^{\dagger} = 
\sum_{j=1}^m p_j u \sigma_j (\diag \mu) \sigma_j^{\dagger}u^{\dagger} = \sum_{j=1}^m p_j 
(u\sigma_j k^{\dagger}) B (k \sigma_j^{\dagger}  u^{\dagger}). \qedhere
\]
\end{proof}
 \begin{Conj}\label{conj}
 The conclusion of Theorem~\ref{thm:main} (and subsequent Theorem~\ref{thm:main2}) 
 holds for any Hadamard manifold.
 \end{Conj}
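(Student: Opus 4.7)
The plan is to isolate the single place in the proof of Theorem~\ref{thm:main} where the special geometry of cases (iii)--(iv) enters---namely Claim~\ref{clm:show}---and to prove its natural analogue for an arbitrary Hadamard manifold. Concretely, writing $x(t) = \exp_{x_0} H(t)$ for the $Q$-gradient flow, I would aim to establish the following \emph{holonomy-Jensen property}: for each $s$, there exists a Borel probability measure $\mu_s$ on $Hol_{x_0}({\cal M})$ such that
\[
\dot H(s) \;=\; \int_{Hol_{x_0}({\cal M})} k \cdot (\tau_{x(s) \to x_0}\dot x(s))\, d\mu_s(k).
\]
Granting this, Q2$'$ together with the convexity of $Q^*_0$ yields, by Jensen's inequality, $Q^*(-\dot H(s)/Q(df_{x(s)})) \le Q^*(-\tau_{x(s)\to x_0}\dot x(s)/Q(df_{x(s)}))$, so the key inequality (\ref{eqn:becomes}) continues to hold; the remaining steps---convexity along $r \mapsto \exp_{x_0} r u(t)$, the energy identity, and the sandwich with the weak duality of Lemma~\ref{lem:weakduality}---then run unchanged, as does the extension to Theorem~\ref{thm:main2} via a factor-by-factor repetition of the case~(iv) argument along the de~Rham decomposition.

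To attack the holonomy-Jensen property, I would first express $\tau_{x(s)\to x_0} \dot x(s) = T_{H(s)}(\dot H(s))$ using the Jacobi-field identity for $d(\exp_{x_0})_{H(s)}$ that already appeared in case (ii). By nonpositive sectional curvature, $T_H = \sinh(\sqrt{-R_H})/\sqrt{-R_H}$, interpreted via the Jacobi equation along $r\mapsto \exp_{x_0} rH$, is self-adjoint, positive definite, and bounded below by $I$, so $T_H^{-1}$ is a well-defined contraction on $T_{x_0}$. The conjecture then reduces to the purely algebraic statement that for every $H \in T_{x_0}$, the operator $T_H^{-1}$ lies in the closed convex hull of the conjugation action $\{k \cdot (\,\cdot\,)\, k^{-1} : k \in Hol_{x_0}({\cal M})\}$ on the endomorphisms of $T_{x_0}$. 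For the symmetric-space case this is precisely what Lemmas~\ref{lem:doublystochastic1}--\ref{lem:doublystochastic2} achieve, via Birkhoff--von Neumann applied to the Schur-product operator with kernel $S^h$.

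The main obstacle is that for a generic Hadamard manifold the curvature tensor is not parallel ($\nabla R \not\equiv 0$), so $R_H$ need no longer be simultaneously diagonalizable in any single basis adapted to $H$, and $T_H^{-1}$ loses the clean Schur-multiplier form that drives the Birkhoff--von Neumann decomposition. I can see two plausible routes around this. Route~(a), a discretization/osculation argument, would subdivide $H(s)$ into small increments, approximate ${\cal M}$ along each segment by its osculating symmetric space (where Claim~\ref{clm:show} is available), assemble the resulting holonomy-convex combinations by the chain rule for parallel transport, and pass to a continuum limit; the Ambrose--Singer identification of the Lie algebra of $Hol_{x_0}({\cal M})$ with curvature values should guarantee that the ``local'' holonomy elements generated along each segment close up to genuine elements of $Hol_{x_0}({\cal M})$. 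Route~(b), a functional-analytic approach, would aim to show directly that any positive self-adjoint $T \succeq I$ on $T_{x_0}$ whose logarithm is generated by curvature operators has $T^{-1}$ in the $Hol_{x_0}({\cal M})$-convex hull, exploiting compactness of $Hol_{x_0}({\cal M})$ and Haar averaging onto its commutant.

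Of the two, route~(a) looks more tractable, but controlling the accumulation of holonomy across the continuum limit is exactly the point at which the present paper stops short; this is the hardest step in the proposal and, in my view, the genuine obstacle to establishing the conjecture in full generality.
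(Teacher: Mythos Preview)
The statement you are attempting is not a theorem but Conjecture~\ref{conj}, which the paper explicitly leaves open; there is no proof in the paper to compare against. What the paper does provide is the Remark immediately following the conjecture, which isolates a sufficient condition: it would be enough to show, for every $Hol_{x_0}({\cal M})$-invariant convex function $F$ on $T_{x_0}$, that $F(\tau_{\exp_{x_0}H\to x_0}\, d(\exp_{x_0})_H K)\geq F(K)$ for all $H,K\in T_{x_0}$ (inequality~(\ref{eqn:ggIEMI})). Your ``holonomy--Jensen property'' is precisely a structural strengthening of this: writing $\dot H(s)$ as a holonomy-average of $\tau_{x(s)\to x_0}\dot x(s)$ yields~(\ref{eqn:ggIEMI}) via Jensen, and in the symmetric-space cases the two formulations are equivalent by the Birkhoff--von~Neumann style argument you cite. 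So your reduction is the same one the paper already records.

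Where your proposal falls short of a proof is exactly where the paper stops. You correctly translate the question into whether the contraction $T_H^{-1}$ lies in the closed convex hull of the $Hol_{x_0}({\cal M})$-action on $\mathrm{End}(T_{x_0})$, but Routes~(a) and~(b) are programmatic sketches, not arguments. In Route~(a), the osculating-symmetric-space approximation does give local Schur-multiplier structure, but the ``holonomy elements generated along each segment'' are infinitesimal curvature transformations, and Ambrose--Singer only tells you they span the Lie algebra of $Hol_{x_0}({\cal M})$; it does not give you a convex-combination representation of the global $T_H^{-1}$, nor any control on how the segment-wise convex combinations compose (composition of convex combinations of orthogonal maps is again such a combination, but the limit as the mesh goes to zero need not preserve the averaging structure without uniform bounds you have not supplied). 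Route~(b) is even less developed: ``positive self-adjoint $T\succeq I$ whose logarithm is generated by curvature operators'' is not a hypothesis with obvious consequences for membership in the holonomy-convex hull, and Haar averaging onto the commutant produces the projection onto holonomy-invariants, which is not what is needed here.

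In short: your identification of the bottleneck coincides with the paper's, your proposed reduction is the one the paper already singles out, and your candidate attacks do not close the gap. The conjecture remains open.
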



Next, by using Theorem~\ref{thm:main}, 
we show that the strong duality itself holds 
for a more general class of $Q$. 
Relaxing Q1, consider
\begin{itemize}
\item[$\tilde {\rm Q}$1:]  For each $x \in {\cal M}$, 
the restriction $Q_x :T_x^* \to \overline{\RR}$ is 
a lower-semicontinuous convex function having bounded level sets.
\end{itemize}
Now suppose that $Q$ satisfies $\tilde {\rm Q}$1, Q2, and Q3,
and that $f$ satisfies 
\begin{equation*}
df_x \in \dom Q_x \quad (x \in {\cal M}). 
\end{equation*}

\begin{Thm}\label{thm:main2} Under this setting, 
assume that ${\cal M}$ is the form in Theorem~\ref{thm:main}. Then, it holds 
    \begin{equation}\label{eqn:strong_duality}
    \inf_{x \in {\cal M}} Q(df_x) = \sup_{\xi \in C{\cal M}^{\infty}} - f^{\infty}(\xi) - Q^*_{-}(\xi).  
    \end{equation}
\end{Thm}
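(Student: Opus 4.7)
My plan is to approximate $Q$ by a family of Moreau--Yosida regularizations satisfying Q1, apply Theorem~\ref{thm:main} to each of them, and then pass to the limit $\epsilon \to 0^+$. Concretely, for each $\epsilon > 0$ I would define
\[
Q^{(\epsilon)}_x(p) := \inf_{q \in T^*_x} \bigl[\, Q_x(q) + \tfrac{1}{\epsilon}\|p-q\|^2 \,\bigr] \qquad (p \in T^*_x),
\]
the infimal convolution of $Q_x$ with $\tfrac{1}{\epsilon}\|\cdot\|^2$. Since $\dom Q_x \ni df_x$, this function is everywhere finite; standard Moreau envelope theory gives that $Q^{(\epsilon)}_x$ is nonnegative, convex, and $C^1$ (so $(Q^{(\epsilon)}_x)^2$ is also $C^1$), and the coercivity in $\tilde {\rm Q}$1 transfers to bounded level sets. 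Thus Q1 holds. Invariance under parallel transports (Q2) is inherited from $Q$ and from the Riemannian norm, while the standard identity $(F \,\square\, G)^* = F^* + G^*$ gives $(Q^{(\epsilon)}_x)^* = Q^*_x + \tfrac{\epsilon}{4}\|\cdot\|^2$, from which Q3 is immediate with $(Q^{(\epsilon)})^*_-(\xi) = Q^*_-(\xi) + \tfrac{\epsilon}{4}\|\xi\|^2$ on $C{\cal M}^\infty$.

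Theorem~\ref{thm:main} applied to $Q^{(\epsilon)}$---the case $\inf_x Q^{(\epsilon)}(df_x) = 0$ being trivial, since Lemma~\ref{lem:weakduality} then forces the supremum to be $\leq 0$ while evaluating at the apex $\xi = 0$ already gives $\geq 0$---yields
\[
\inf_{x \in {\cal M}} Q^{(\epsilon)}(df_x) = \sup_{\xi \in C{\cal M}^\infty} \bigl[\, -f^\infty(\xi) - Q^*_-(\xi) - \tfrac{\epsilon}{4}\|\xi\|^2\, \bigr].
\]
Both sides are monotone increasing as $\epsilon \downarrow 0$. By monotone convergence the right-hand side tends to $\sup_\xi [-f^\infty(\xi) - Q^*_-(\xi)]$. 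For the left-hand side, $Q^{(\epsilon)} \leq Q$ gives $\lim_\epsilon \inf_x Q^{(\epsilon)}(df_x) \leq \inf_x Q(df_x)$; the reverse inequality I would establish by extracting a diagonal near-minimizing sequence $(x_n, q_n, \epsilon_n)$ with $\epsilon_n \to 0$ and $Q(q_n) + \tfrac{1}{\epsilon_n}\|df_{x_n} - q_n\|^2 \to \lim_\epsilon \inf Q^{(\epsilon)}$, noting that $\|df_{x_n} - q_n\| \to 0$ and, after parallel transport to $T^*_{x_0}$, the sequence $q_n$ lies in a compact sublevel set of $Q$ by $\tilde {\rm Q}$1, so a subsequence converges to some $q^*$ with $Q(q^*) \leq \lim_\epsilon \inf Q^{(\epsilon)}$ by lower semicontinuity; combined with the fact that $q^*$ is a limit of the gradient images $\widetilde{df_{x_n}}$, this forces $\lim_\epsilon \inf Q^{(\epsilon)}(df_x) \geq \inf_x Q(df_x)$. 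Combining both limits then yields the strong duality.

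The main technical obstacle lies in the reverse direction of the left-hand limit: in principle $\lim_\epsilon \inf Q^{(\epsilon)}$ could drop strictly below $\inf_x Q(df_x)$ if $Q$ were discontinuous at a limit point $q^*$ of gradient images lying outside $\{df_x : x \in {\cal M}\}$. In the present setting this is ruled out using lower semicontinuity of $Q$ together with continuity of $Q$ on the relative interior of $\dom Q$; in case~(i), alternatively, the result follows directly from Fenchel--Rockafellar duality on Euclidean space, bypassing the regularization entirely.
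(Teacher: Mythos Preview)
Your strategy coincides with the paper's: regularize $Q$ by its Moreau envelope, verify Q1--Q3 for the envelope via the conjugate identity $(e_\lambda Q)^* = Q^* + \tfrac{\lambda}{2}\|\cdot\|^2$, apply Theorem~\ref{thm:main} to each $e_\lambda Q$, and let $\lambda \to 0^+$ using monotonicity on both sides.

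Two corrections are in order. First, nonnegativity of $Q^{(\epsilon)}$ (needed for Q1, and implicitly for your apex argument at $\xi = 0$, where $-f^\infty(0) - (Q^{(\epsilon)})^*_-(0) = \inf Q$) does not follow from $\tilde{\rm Q}1$ alone; the paper first adds a positive constant to $Q$, which shifts both sides of~(\ref{eqn:strong_duality}) equally and simultaneously arranges $\inf_x Q(df_x) > 0$, so that Theorem~\ref{thm:main} applies without a separate zero-infimum case. Second, for the primal limit the paper transports everything to $T^*_{x_0}$, picks minimizers $p_i \in \overline{D_f}$ of $e_{\lambda_i}Q$ lying in a fixed compact level set, extracts $p_i \to p^* \in \overline{D_f}$, and uses the joint lower semicontinuity of $(\lambda,p)\mapsto e_\lambda Q(p) = \sup_v\bigl[p(v) - Q^*(v) - \tfrac{\lambda}{2}\|v\|^2\bigr]$ to conclude $\alpha_0 \geq Q(p^*)$. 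Your diagonal extraction reaches the same inequality $Q(q^*) \leq \alpha_0$ with $q^* \in \overline{D_f}$. The remaining step---deducing $\inf_{p \in D_f} Q(p) \leq \alpha_0$ from this---is precisely the obstacle you flag, and your proposed fix via continuity of $Q$ on $\mathrm{ri}\,\dom Q$ leaves open the case $q^* \in \partial(\dom Q)$; the paper asserts $Q(p^*) \geq \inf_{p \in D_f} Q(p)$ at this same point without further justification, so your treatment is no less complete than the paper's.
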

\begin{proof}
We can assume that $Q$ is $\RR_{\geq 0}$-valued and $\inf_{x \in {\cal M}}Q(df_x)>0$  
(by adding a positive constant to $Q$).
%
We utilize the {\em Moreau envelope} of $Q$; see e.g., \cite[Chapter 1.G]{RockafellarWets}.
Specifically, for $\lambda > 0$, define Moreau envelope $e_{\lambda} Q:T^*{\cal M} \to \RR$ by
\[
(e_\lambda Q)_x(p) := \inf_{q \in T_x^*} Q_x(q) + \frac{1}{2\lambda} (\|p-q\|^*)^2 \quad (p \in T_x^*, x\in {\cal M}).
\]
Then $(e_\lambda Q)_x$ is continuously differentiable convex (\cite[Theorem 2.26]{RockafellarWets}), and satisfies Q1.
The conjugate $(e_\lambda Q)^*$ is given by
\[
(e_\lambda Q)^*_x = Q_x^* + \frac{\lambda}{2} \|\cdot\|^2.
\]
See \cite[Example 11.26]{RockafellarWets}.
From this, we see that $e_\lambda Q (= (e_\lambda Q)^{**})$ satisfies Q2 and Q3.
By applying Theorem~\ref{thm:main} to $e_\lambda Q$, we have
\begin{equation}\label{eqn:strong_duality_e_lambdaQ}
\alpha_{\lambda} := \inf_{x\in {\cal M}} e_{\lambda}Q(df_{x}) = \sup_{\xi \in C{\cal M}^{\infty}} - f^{\infty}(\xi) - Q^*_{-}(\xi) - \frac{\lambda}{2} \|\xi\|^2 \quad (\lambda > 0). 
\end{equation}
From $Q \geq e_{\lambda}Q \geq e_{\lambda'}Q$ if $0 < \lambda \leq \lambda'$, 
we see that $\lambda \mapsto \alpha_{\lambda}$ is nonincreasing. 
Since it has an upper bound $\alpha^* :=  \inf_{x \in {\cal M}} Q(df_{x})$, 
the limit $\alpha_0 := \lim_{\lambda \to +0} \alpha_\lambda (= \sup_{\lambda >0} \alpha_\lambda)$ exists.
We show 
\[
\alpha_0 = \alpha^*
\]
Define $D_f \subseteq T_{x_0}^*$ by
\[
D_f := \{ \tau_{x \to x_0} df_{x} \mid x \in {\cal M} \}.
\]
Then 
\[
\alpha_{\lambda} = \inf_{p \in D_f} e_{\lambda}Q(p). 
\]
Since $e_{\lambda}Q$ has compact level sets, 
the infimum is attained by some point in $\overline{D_f}$ (closure of $D_f$). 
Since $\lv(e_{\lambda}Q,r) \subseteq \lv(e_{\lambda'}Q,r)$ for $\lambda \leq \lambda'$ and $r \geq 0$, 
we can choose convergent sequences $(\lambda_i)_{i=1,2,\ldots} \subseteq \RR_{>0}$ and
$(p_i)_{i=1,2,\ldots} \subseteq \overline{D_f}$ such that
$\lambda_i \to 0$, 
$e_{\lambda_i} Q(p_i) = \alpha_{\lambda_i}$, and $p_i \to p^*\in \overline{D_f}$.
Here $(\lambda,p) \mapsto e_{\lambda} Q(p)$ is lower semicontinuous, 
since it is written as
the supremum of affine functions  
$(p,\lambda) \mapsto  p(v) - Q^{*}(v)- (\lambda/2)\|v\|^2$ over $v \in T_x$.
Therefore 
$\alpha_0 = \lim_{i \to \infty}e_{\lambda_i} Q(p_i) \geq Q(p^*) \geq \inf_{p \in D_f} Q(p) = \alpha^* \geq \alpha_0$, 
which implies $\alpha_0 = \alpha^*$.
By the weak duality (Lemma~\ref{lem:weakduality}) and (\ref{eqn:strong_duality_e_lambdaQ}), we have 
\begin{eqnarray*}
&& \inf_{x \in {\cal M}} Q(df_x) \geq \sup_{\xi \in C{\cal M}^{\infty}} - f^{\infty}(\xi) - Q^*_{-}(\xi)  \geq  \sup_{\xi \in C{\cal M}^{\infty}} - f^{\infty}(\xi) - Q^*_{-}(\xi) - \frac{\lambda_i}{2} \|\xi\|^2  \\
&& = \inf_{x\in {\cal M}} e_{\lambda_i}Q(df_{x}) = \alpha_{\lambda_i} \quad \underset{i \to \infty}{\longrightarrow} \quad \alpha_0 = \alpha^* = \inf_{x \in {\cal M}} Q(df_x).
\end{eqnarray*}
Thus, the strong duality holds.
\end{proof}
We provide a geodesic convexity interpretation of the RHS in (\ref{eqn:strong_duality}).
\begin{Prop}\label{prop:g-convexity}
Assume that ${\cal M} = \RR^{n} \times {\cal P}_{n_1}^1 \times \cdots \times {\cal P}_{n_d}^1$ or $Q$ is written as 
$Q = h(\|\cdot\|^*)$.
Then, $f^{\infty} + Q_{-}^*$ is geodesically convex on $C{\cal M}^{\infty}$.
\end{Prop}
\begin{proof}
It is shown \cite{Hirai_Hadamard2022} 
that the recession function $f^{\infty}$ is geodesically convex on $C{\cal M}^{\infty}$.
Then the former case follows from (a straightforward generalization of) Lemma~\ref{lem:ui-convex=>g-convex}. 
Consider the latter case.  
In Hadamard space $C{\cal M}^{\infty}$,
the norm $\|\cdot\|$ represents the distance from $0$ 
and is geodesically convex~\cite[II.2.5]{BridsonHaefliger1999}. 
With the fact that $h^*$ is nondecreasing convex, 
we see geodesic convexity of $Q_{-}^* = h^{*}(\|\cdot\|)$.
\end{proof}

\paragraph{$Q$-subgradient method.}
Here we consider to solve (\ref{eqn:min_gradient-norm}) by discretization of the $Q$-gradient flow.
A natural discretization scheme of the $Q$-gradient flow is the following:
\begin{equation}\label{eqn:Q-subgradient_descent}
x_{i+1} := \exp_{x_i} - \delta_i Z_i,\quad Z_i \in  \partial (Q_{x_i}^2/2)(df_{x_i}) \quad (i=0,1,2,\ldots), 
\end{equation}
where $\delta_i >0$ is a step size.
By $\partial Q_{x_i}(df_{x_i}) = \tau_{I \to x_i}\partial Q_0(\tau_{x_i \to I}df_{x_i})$,  
the $Q$-subgradient method is implemented by subgradient computation of $Q_0$.

\begin{Question}
    Under which conditions of $f,Q,\delta_i$ does the sequence $Q(df_{x_i})$ converge to $\inf_{x\in {\cal M}}Q(df_{x})$ ? 
\end{Question}
Hirai and Sakabe~\cite{HiraiSakabe2024FOCS} showed that
the required convergence holds if $f$ is $L$-smooth convex, $Q = \|\cdot\|^*$, and $\delta_i=1/L$.
This important problem goes beyond the scope of this paper 
and has to be left to subsequent research.

\section{Convex optimization on entanglement polytopes}\label{sec:entanglement_polytope}
In this section, we apply the results in Section~\ref{sec:Q-gradient_flow}
to the Kempf-Ness optimization for GL-action on tensors.
For a positive integer $n$, let $[n] := \{1,2,\ldots,n\}$.
Let $Mat_n(\CC)$ denote the set of $n \times n$ matrices over $\CC$.
We will regard $Mat_{n_1}(\CC) \times Mat_{n_2}(\CC) \times \cdots \times Mat_{n_d}(\CC)$
as a block-diagonal subspace of $Mat_{n_1 + n_2 + \cdots + n_d}(\CC)$ by
\begin{equation}\label{eqn:block-diagonal}
(A_1,A_2,\ldots,A_d) \ \equiv  \
\left(
\begin{array}{cccc}
A_1 &     &  & \\
    & A_2 &  &  \\
    &     & \ddots & \\
    &   &   & A_d
\end{array}
\right),
\end{equation}
where multiplication and other operations are considered in $Mat_{n_1 + n_2 + \cdots + n_d}(\CC)$. 

\subsection{GL-action on tensors and entanglement polytopes}
Let $d$ be a positive integer.
For each $i \in [d]$, let ${\cal V}_i := \CC^{n_i}$ be 
an $n_i$-dimensional $\CC$-vector space with standard Hermitian inner product 
$\langle u,v\rangle := u^{\dagger}v$.
Consider the tensor product space ${\cal V}:= {\cal V}_1 \otimes {\cal V}_2 \otimes \cdots \otimes {\cal V}_d$.
An element in ${\cal V}$ is called a ($d$-)tensor.
A tensor $v \in {\cal V}$ is uniquely written as 
\begin{equation*}
v = \sum_{j_1,j_2,\ldots,j_d} v_{j_1j_2\cdots j_d} e_{j_1} \otimes  e_{j_2} \otimes \cdots \otimes   e_{j_d},
\end{equation*}
where $e_i$ denotes the $i$-th unit vector and the index $j_k$ ranges over $[n_k]$.
The Hermitian inner product $\langle, \rangle$ on ${\cal V}$ is determined by extending 
$\langle u_1 \otimes \cdots \otimes u_d, v_1 \otimes \cdots \otimes v_d \rangle := \langle u_1,v_1\rangle  \cdots  \langle u_d,v_d\rangle$ (bi)linearly. The norm $\|\cdot\|$ on ${\cal V}$ 
is defined by $\|v\| := \sqrt{\langle v, v\rangle}$ accordingly.
The product of matrices $Mat_{n_1}(\CC) \times Mat_{n_2}(\CC) \times \cdots  \times  Mat_{n_d}(\CC)$
acts on ${\cal V}$ by
\[
(a_1,a_2,\ldots,a_d) \cdot (v_1 \otimes v_2 \otimes \cdots \otimes v_d) 
= (a_1 v_1) \otimes (a_2 v_2) \otimes \cdots \otimes (a_d v_d).  
\]
Let $G := GL_{n_1}(\CC) \times GL_{n_2}(\CC) \times \cdots \times GL_{n_d}(\CC)$. 
We consider the $G$-action on ${\cal V}$.
The associated {\em moment map} $\mu:{\cal V} \to  {\cal H} := {\cal H}_{n_1} \times {\cal H}_{n_2} \times \cdots \times {\cal H}_{n_d}$ is given by
\begin{equation*}
\mu(v) = \left( \mu_1(v), \mu_2(v),\ldots,\mu_d(v)\right), 
\end{equation*}
where each $\mu_k(v) \in {\cal H}_{n_k}$ is given by
\begin{equation*}
\trace \mu_k(v) H = \frac{\langle v, (I,\ldots, I, \overset{k}{H}, I,\ldots,I)\cdot v\rangle}{\|v \|^2} 
\quad (H \in {\cal H}_{n_k}).
\end{equation*}
The matrix $\mu_k(v)$ is explicitly written as
\begin{equation}\label{eqn:mu_k(v)_ij}
\mu_k(v)_{ij}= \frac{1}{\| v \|^2}\sum_{j_1,\ldots, j_{k-1},j_{k+1}, \ldots,j_d}
v_{j_1 \cdots i \cdots j_d} \overline{v_{j_1 \cdots j \cdots j_d}} \quad (i,j \in [n_k]), 
\end{equation}
where the indices $i,j$ place on the $k$-th index and the index $j_\ell$ ranges over $[n_\ell]$ for $\ell \in [d]\setminus \{k\}$. 
From this, we see that $\mu_i(v) \in {\cal D}^+_{n_i}$, 
and $\mu(v) \in {\cal D}^+:= {\cal D}^+_{n_1} \times {\cal D}^+_{n_2} \times \cdots \times {\cal D}^+_{n_d}$.

Now we are given a convex function $S:{\cal H} \to \overline{\RR}$ 
that is invariant under the adjoint action of $K := U_{n_1} \times U_{n_2} \times \cdots \times U_{n_d}$:
\begin{itemize}
\item[S1:] $S$ is a lower-semicontinuous convex function with 
${\cal D}^+ \subseteq \dom S$.
\item[S2:] $S$ is {\em $K$-invariant}, that is, $S(Y) = S(k Y k^{\dagger})$  
holds for any $Y \in  {\cal H}$ and $k \in K$.
\end{itemize}
In S2, we use the convention (\ref{eqn:block-diagonal}).
Observe that $S$ is a function of the eigenvalues of $Y_i$.
For $(p^{(1)},p^{(2)},\ldots, p^{(d)}) \in \RR^{n_1} \times \RR^{n_2} \times \cdots \times \RR^{n_d}$, 
we denote
$S(\diag p^{(1)},\diag p^{(2)},\ldots, \diag p^{(d)})$ 
simply by $S(p^{(1)},p^{(2)},\ldots, p^{(d)})$. 

Consider the following optimization problem on the group $G$:
\begin{equation}\label{eqn:S(mu(gv))}
{\rm Min.} \quad S(\mu(g\cdot v)) \quad {\rm s.t.} \quad g\in G.
\end{equation}
This problem is viewed as a convex optimization problem on the Euclidean space $\RR^{n_1+\cdots + n_d}$ as well 
as the minimum $Q$-gradient-norm problem (\ref{eqn:minimum_Q-gradient-norm_problem}). 
For $i \in [d]$, 
let $\spec \mu_i(v) \in \RR^{n_i}$ denote the nonincreasing 
vector $p^{(i)} \in \RR^{n_i}$ with $p_1^{(i)} \geq p_2^{(i)} \geq \cdots \geq p_{n_i}^{(i)}$ 
of the eigenvalues of $\mu_i(v)$ counting multiplicity.
Let $\spec \mu(v) := (\spec \mu_1(v),\spec \mu_2(v),\ldots,\spec \mu_d(v))$.
Define $\Delta(v) \subseteq \RR^{n_1} \times \RR^{n_2} \times \cdots \times \RR^{n_d}$ by
\begin{equation}
\Delta(v) := \overline{\{ \spec \mu(g \cdot v)  \mid g \in G \}}. 
\end{equation}
Then the problem (\ref{eqn:S(mu(gv))}) is rewritten as 
\begin{equation}\label{eqn:min_S(p)}
{\rm Min.} \quad S(p) \quad {\rm s.t.} \quad p = (p^{(1)}, p^{(2)},\ldots,p^{(d)}) \in \Delta(v).
\end{equation}
The set $\Delta(v)$ is known as the {\em moment polytope} of the Hamiltonian action $G \curvearrowright \mathbb{P}({\cal V})$. 
Then, the classical result in GIT implies:
\begin{Thm}[{Convexity theorem~\cite{GuilleminSternberg1982-I,GuilleminSternberg1984-II,Kirwan1984}}]
$\Delta(v)$ is a convex polytope.
\end{Thm}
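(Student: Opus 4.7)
The plan is to reduce the statement to Kirwan's convexity theorem applied to the projective orbit closure. First, pass from the affine to the compact setting by considering $X := \overline{G \cdot [v]} \subseteq \mathbb{P}({\cal V})$, a compact (generally singular) $G$-invariant subvariety. Equip $\mathbb{P}({\cal V})$ with the Fubini--Study K\"ahler form; under the identification $\mathfrak{k}^* \simeq i\mathfrak{k} \subseteq {\cal H}$, the moment map for the $K := U_{n_1} \times \cdots \times U_{n_d}$ action on $\mathbb{P}({\cal V})$ is exactly $[w] \mapsto \mu(w)$ from the paper, and its restriction to the $K$-invariant $X$ is a moment map for $K \curvearrowright X$.

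Second, I would identify $\Delta(v)$ with $\mu(X) \cap \mathfrak{t}^*_+$, where $\mathfrak{t}^*_+$ is the positive Weyl chamber of $K$, identified with tuples of diagonal matrices having nonincreasing entries. The map $\spec$ sends each element of ${\cal H}$ to the unique intersection of its $K$-orbit with $\mathfrak{t}^*_+$. Hence $\Delta(v) \subseteq \mu(X) \cap \mathfrak{t}^*_+$ follows from $K$-equivariance of $\mu$ and compactness of $X$ (which gives closedness of $\mu(X)$). Conversely, any $p \in \mu(X) \cap \mathfrak{t}^*_+$ equals $\mu(w)$ for some $w \in X$; density of $G \cdot [v]$ in $X$ produces $g_n \in G$ with $g_n \cdot v \to w$, whence $\spec \mu(g_n \cdot v) \to \spec \mu(w) = p$, so $p \in \Delta(v)$.

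Finally, apply Kirwan's convexity theorem: $\mu(X) \cap \mathfrak{t}^*_+$ is a convex polytope for any connected compact Hamiltonian $K$-space $X$. The main obstacle is that $X$ is generically singular, so the smooth Atiyah--Guillemin--Sternberg--Kirwan proof, which uses the equivariant local normal form and Morse-theoretic properties of $\|\mu\|^2$ on a manifold, does not apply verbatim. I would sidestep this by one of three routes: (a) invoking Sjamaar's extension of Kirwan's theorem to singular K\"ahler varieties, (b) passing to an equivariant resolution $\widetilde X \to X$ and noting that surjectivity of the resolution preserves the moment image, or (c) taking Brion's algebraic route, where the moment polytope of a projective $G$-variety is described by the asymptotic $G$-highest weights appearing in the isotypic decomposition of $\bigoplus_{m \geq 0} H^0(X, {\cal O}(m))$; route (c) produces the polyhedral structure directly from representation-theoretic data and bypasses all smoothness assumptions on $X$.
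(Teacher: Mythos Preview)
The paper does not supply its own proof of this statement; it merely records it as a classical result and cites Guillemin--Sternberg and Kirwan. Your outline is a correct sketch of how to extract the conclusion from that literature: the identification $\Delta(v) = \mu(X) \cap \mathfrak{t}^*_+$ for $X = \overline{G\cdot [v]}\subseteq \mathbb{P}({\cal V})$ is the standard reduction (the two containments you give are right, using $K$-invariance of $X$, $K$-equivariance of $\mu$, compactness of $X$, and continuity of $\mu$ and $\spec$), and you correctly isolate the singularity of $X$ as the only obstacle to invoking the smooth Kirwan theorem verbatim. Any of your three workarounds---Sjamaar's singular version, an equivariant resolution, or Brion's highest-weight description---is a legitimate way to finish. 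There is nothing further to compare against in the paper itself.
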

Although this theorem holds for more general actions, 
it was explored by Walter, Doran, Gross, and Christandl~\cite{WDGC2013_Science} that 
this moment polytope $\Delta(v)$ is an effective tool for studying the SLOCC entanglement class of a quantum multi-particle system,  
and is particularly called the {\em entanglement polytope}.
Note also that in the literature of K-stability, 
Lee, Sturm, and Wang~\cite{Lee2022momentmapconvexfunction} 
studied other aspects of invariant convex functions 
on moment polytopes (for general actions).

The problem (\ref{eqn:S(mu(gv))})  is also written as 
the minimum $Q$-gradient-norm problem (\ref{eqn:minimum_Q-gradient-norm_problem}) on 
the product of PD-manifolds ${\cal M} := {\cal P}_{n_1} \times {\cal P}_{n_2} \times \cdots \times {\cal P}_{n_d} 
\subseteq {\cal P}_{n_1 + \cdots + n_d}$.
Here $T_I =  T_I{\cal M}$ is given by ${\cal H}$, 
and $T_I^* = T_I^*{\cal M}$ is also identified with ${\cal H}$
by $Y(X) := \sum_{i=1}^d \trace Y_iX_i$ $(X \in T_I = {\cal H})$.
Now we can regard ${\cal D}^{+} \subseteq T_I^*$ and $S:T_I^* \to \overline{\RR}$.

Define the {\em Kempf-Ness function} $\Phi_v: {\cal M} \to \RR$ by
\begin{equation}
\Phi_v(x) := \log \langle v, x \cdot v\rangle \quad (x \in {\cal M}). 
\end{equation}
It is well-known that the Kempf-Ness function is a geodesically convex function on the symmetric space ${\cal M} \simeq G/K$.
\begin{Lem}[see e.g., \cite{BFGOWW_FOCS2018,BFGOWW_FOCS2019,GRS_Book}]
    $\Phi_v$ is geodesically convex.
\end{Lem}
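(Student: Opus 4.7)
The plan is to reduce to the ``origin'' $I = (I, \ldots, I) \in {\cal M}$ using the isometric $G$-action, diagonalize the tangent direction, and then recognize the resulting one-variable function as the logarithm of a positively weighted sum of exponentials, which is elementary to verify as convex.

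First I would fix an arbitrary geodesic $\gamma : \RR \to {\cal M}$ and a point $t_0 \in \RR$. Writing $\gamma(t_0) = x_0 = (x_{01}, \ldots, x_{0d})$, choose $g_0 = (x_{01}^{1/2}, \ldots, x_{0d}^{1/2}) \in G$ so that $g_0 \cdot I = x_0$ under the $G$-action on ${\cal M}$. Since this action is by isometries and maps geodesics to geodesics, the curve $\tilde\gamma(t) := g_0^{-1} \cdot \gamma(t)$ is a geodesic through $I$ at $t = t_0$. A direct computation shows
\[
\Phi_v(\gamma(t)) = \log\bigl\langle v,\, (g_0 \tilde\gamma(t) g_0^\dagger) \cdot v \bigr\rangle = \log\bigl\langle g_0^\dagger \cdot v,\, \tilde\gamma(t) \cdot (g_0^\dagger \cdot v)\bigr\rangle = \Phi_{g_0^\dagger \cdot v}(\tilde\gamma(t)),
\]
so it is enough to prove geodesic convexity along geodesics passing through $I$, with $v$ replaced by $g_0^\dagger \cdot v$.

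Next, any geodesic through $I$ has the form $\tilde\gamma(t) = (e^{tH_1}, \ldots, e^{tH_d})$ for some $H_i \in {\cal H}_{n_i}$. Diagonalizing each $H_i = k_i (\diag \lambda^{(i)}) k_i^\dagger$ with $k_i \in U_{n_i}$ and $\lambda^{(i)} \in \RR^{n_i}$, and letting $w := (k_1^\dagger, \ldots, k_d^\dagger) \cdot v$, I would use the commutation of the diagonal $G$-action with itself to rewrite
\[
\bigl\langle v, \tilde\gamma(t) \cdot v \bigr\rangle = \bigl\langle w,\, (\diag e^{t\lambda^{(1)}}, \ldots, \diag e^{t\lambda^{(d)}}) \cdot w \bigr\rangle = \sum_{j_1, \ldots, j_d} |w_{j_1 \cdots j_d}|^2 \, e^{t(\lambda^{(1)}_{j_1} + \cdots + \lambda^{(d)}_{j_d})}.
\]
Thus $\Phi_v(\tilde\gamma(t))$ equals $\log \sum_{J} c_J e^{t\alpha_J}$ with $c_J = |w_J|^2 \geq 0$ and $\alpha_J \in \RR$.

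Finally, I would invoke the standard fact that the log-sum-exp function $t \mapsto \log \sum_J c_J e^{t\alpha_J}$ (with $c_J \geq 0$, not all zero) is convex on $\RR$; this follows from a one-line Cauchy--Schwarz computation of the second derivative. Nontriviality of the sum is guaranteed because $v \neq 0$ implies $w \neq 0$. Since this holds for every geodesic through $I$ (and hence, by the first step, along every geodesic in ${\cal M}$), $\Phi_v$ is geodesically convex. The argument is essentially routine; the only point requiring mild care is the $G$-equivariance reduction, and in particular verifying that the $G$-action on ${\cal M}$ given by $g \cdot x = (g_i x_i g_i^\dagger)_{i}$ is compatible with the tensor action so that $\Phi_v(g \cdot x) = \Phi_{g^\dagger \cdot v}(x)$.
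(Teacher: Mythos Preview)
The paper does not supply its own proof of this lemma; it is stated as a well-known fact with references to \cite{BFGOWW_FOCS2018,BFGOWW_FOCS2019,GRS_Book}. Your self-contained argument is correct: the reduction via the isometric $G$-action $(g,x)\mapsto gxg^{\dagger}$ to geodesics through $I$, the diagonalization step, and the log-sum-exp convexity are all valid, and the compatibility $\Phi_v(g\cdot x)=\Phi_{g^{\dagger}\cdot v}(x)$ follows exactly as you indicate from $(gxg^{\dagger})\cdot v = g\cdot(x\cdot(g^{\dagger}\cdot v))$ together with $\langle v, g\cdot u\rangle=\langle g^{\dagger}\cdot v, u\rangle$. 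One cosmetic point: after your reduction the geodesic $\tilde\gamma$ passes through $I$ at $t=t_0$, not $t=0$, so its explicit form is $\tilde\gamma(t)=(e^{(t-t_0)H_1},\ldots,e^{(t-t_0)H_d})$; this affine shift in $t$ is of course irrelevant for convexity (and you could simply have taken $t_0=0$ from the start).
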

The moment map $\mu$ is interpreted as transported differentials of $\Phi$.
\begin{Lem}[see \cite{BFGOWW_FOCS2018,HiraiSakabe2024FOCS}]\label{lem:dPhi_x}
$\tau_{x \to I}d(\Phi_v)_x = \mu( x^{1/2} \cdot v)$.
\end{Lem}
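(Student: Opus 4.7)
The plan is to reduce the differential computation at a general $x$ to the computation at the base point $I$, where the formula $d(\Phi_v)_I = \mu(v)$ drops immediately out of the definition of the moment map. First I would verify the $x = I$ case directly: taking the geodesic $\gamma_I(t) := (e^{tH_1},\ldots,e^{tH_d})$ in direction $H \in T_I{\cal M}$ and differentiating $\Phi_v(\gamma_I(t)) = \log \langle v, (e^{tH_1}\otimes\cdots\otimes e^{tH_d})v\rangle$ at $t=0$ by the product rule, each summand is exactly the inner product defining $\mu_i(v)$ in (\ref{eqn:mu_k(v)_ij}), giving $d(\Phi_v)_I(H) = \sum_{i=1}^d \trace \mu_i(v) H_i$, which reads $d(\Phi_v)_I = \mu(v)$ under the trace pairing identification $T_I^* \simeq {\cal H}$.

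For general $x = (x_1,\ldots,x_d)$ and $H \in T_x{\cal M}$, the geodesic from $x$ in direction $H$ has the form $\gamma_i(t) = x_i^{1/2} e^{tA_i} x_i^{1/2}$ with $A_i := x_i^{-1/2}H_i x_i^{-1/2}$, so that $A = \tau_{x \to I}H$. By multiplicativity of the tensor product, the action factors as
\[
\gamma(t)\cdot v = (x^{1/2}\cdot)\bigl[(e^{tA_1}\otimes\cdots\otimes e^{tA_d})(x^{1/2}\cdot v)\bigr],
\]
and since $x_1^{1/2}\otimes\cdots\otimes x_d^{1/2}$ is self-adjoint on ${\cal V}$, moving it across the inner product gives
\[
\Phi_v(\gamma(t)) = \log\langle w, (e^{tA_1}\otimes\cdots\otimes e^{tA_d})\,w\rangle = \Phi_w(\gamma_I(t)),\qquad w := x^{1/2}\cdot v,
\]
where $\gamma_I(t) := (e^{tA_1},\ldots,e^{tA_d})$ is the corresponding geodesic at $I$. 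Thus $\Phi_v\circ\gamma$ at $x$ is literally $\Phi_w\circ\gamma_I$ at $I$, and applying the identity-case formula to the tensor $w$ yields
\[
d(\Phi_v)_x(H) = d(\Phi_w)_I(A) = \mu(w)(A) = \mu(w)(\tau_{x\to I}H).
\]
By the definition of the cotangent parallel transport (Section~\ref{sec:preliminaries}), this is precisely $\tau_{x\to I}d(\Phi_v)_x = \mu(w) = \mu(x^{1/2}\cdot v)$, as required.

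The whole argument is routine once the correct factorization is identified; the only step requiring some care is recognizing that the Hermiticity of $x_1^{1/2}\otimes\cdots\otimes x_d^{1/2}$ allows the substitution $v \rightsquigarrow w$ to reduce the computation at $x$ to the computation at $I$ verbatim. An equally clean alternative exploits the $G$-equivariance $\Phi_v \circ L_g = \Phi_{g^\dagger\cdot v}$ for $L_g(x):=gxg^\dagger$, specialized to $g := x^{1/2}$, together with the identification $(dL_{x^{1/2}})_I = \tau_{I\to x}$ on $T_I$, which is read off directly from the formula $\tau_{I\to x}H = x^{1/2}Hx^{1/2}$ recalled in Section~\ref{subsec:psd}.
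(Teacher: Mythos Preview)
Your proof is correct and follows essentially the same approach as the paper's: both compute $d(\Phi_v)_x(H)$ by factoring the action through $x^{1/2}$ to rewrite $\langle v,\gamma(t)\cdot v\rangle$ as $\langle x^{1/2}\cdot v,(\cdots)\,x^{1/2}\cdot v\rangle$, then read off the moment map. The only cosmetic difference is that the paper differentiates along the linear curve $t\mapsto x+tX$ in the ambient space rather than the geodesic, and does the computation in one pass without first isolating the $x=I$ case.
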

\begin{proof}
For $X = (X_1,X_2,\ldots,X_d) \in T_x$, we have
\begin{eqnarray*}
d(\Phi_v)_x(X) &= & \frac{d}{dt}\mid_{t = 0} \log \langle v , (x_1+tX_1,\ldots, x_d + tX_d) \cdot v \rangle \\ 
&=& \frac{1}{\| x^{1/2}\cdot v\|^2}\sum_{i=1}^d \langle x^{1/2} \cdot v , (I,\ldots,I,x_i^{-1/2} X_ix_i^{-1/2},I,\ldots,I) \cdot x^{1/2} \cdot v \rangle \\
&=& \sum_{i=1}^d \trace \mu_i(x^{1/2}\cdot v) (x_i^{-1/2}X_ix_i^{-1/2}).
\end{eqnarray*}
Hence $d(\Phi_v)_x(X) = \mu(x^{1/2}\cdot v)(\tau_{x \to I}X)$. 
\end{proof}

Extend $S$ to $T^*{\cal M} \to \overline{\RR}$ by
\begin{equation*}
S_x(Y) := S\left(\tau_{x \to I} Y \right) \quad (x \in {\cal M}, Y \in T^*_x{\cal M}).
\end{equation*}
If $x = g^{\dagger}g$ for $g \in G$, 
then $x^{1/2} = k g$ for $k \in K$ and 
$S(d(\Phi_v)_x) = S(\tau_{x \to I} d (\Phi_v)_x) = S(\mu(x^{1/2}\cdot v)) = S(\mu(kg\cdot v)) = S(k^{\dagger} \mu(g\cdot v)k) = S(\mu(g \cdot v))$.
Hence, the problem (\ref{eqn:S(mu(gv))}) becomes
\begin{equation*}
    {\rm Min.} \quad S(d(\Phi_v)_x) \quad {\rm s.t.}\quad x\in {\cal M} = {\cal P}_{n_1} \times {\cal P}_{n_2} \times \cdots \times {\cal P}_{n_d}
\end{equation*}
for which the results in Section~\ref{sec:Q-gradient_flow} are applicable.
Since ${\cal D}^+$ is compact, we can assume that level sets of $S$ are bounded.
Then, $({\cal M},S)$ satisfies $\tilde {\rm Q}$1, Q2, and Q3 (see Example~\ref{ex:PD-manifold}).  
By Theorem~\ref{thm:main2}, we have:
\begin{Thm}[Duality theorem for convex optimization on entanglement polytope] \label{thm:duality_entanglement}
\begin{equation}\label{eqn:duality_entanglement}
\min_{p \in \Delta(v)} S(p) = \sup_{\xi \in C{\cal M}^{\infty}} -  \Phi_v^{\infty}(\xi) - S^*_{-}(\xi).
\end{equation}
\end{Thm}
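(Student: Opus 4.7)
The plan is to derive the theorem as a direct specialization of Theorem~\ref{thm:main2} to the Kempf-Ness triple $(\mathcal{M}, S, \Phi_v)$ with $\mathcal{M} := \mathcal{P}_{n_1} \times \cdots \times \mathcal{P}_{n_d}$, and then to identify the resulting infimum over $\mathcal{M}$ with a minimum over $\Delta(v)$. The hypotheses of Theorem~\ref{thm:main2} are already set up in the discussion just preceding the theorem: $\mathcal{M}$ falls under case (iv) of Theorem~\ref{thm:main} via the de Rham decomposition $\mathcal{P}_{n_i} \simeq \RR \times \mathcal{P}_{n_i}^1$; $(\mathcal{M}, S)$ satisfies $\tilde{\rm Q}1$, Q2, and Q3 (Example~\ref{ex:PD-manifold} together with compactness of $\mathcal{D}^+$); and $d(\Phi_v)_x \in \dom S_x$ follows from $\tau_{x \to I} d(\Phi_v)_x = \mu(x^{1/2} \cdot v) \in \mathcal{D}^+ \subseteq \dom S$ by Lemma~\ref{lem:dPhi_x}. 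Invoking Theorem~\ref{thm:main2} then yields
$$\inf_{x \in \mathcal{M}} S(d(\Phi_v)_x) = \sup_{\xi \in C\mathcal{M}^\infty} -\Phi_v^\infty(\xi) - S^*_-(\xi).$$

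To rewrite the LHS as $\min_{p \in \Delta(v)} S(p)$, I would use polar decomposition: any $g \in G$ equals $k x^{1/2}$ for some $k \in K$ and $x = g^\dagger g \in \mathcal{M}$. Combining Lemma~\ref{lem:dPhi_x} with the block-wise equivariance $\mu(k \cdot w) = k \mu(w) k^\dagger$ and the $K$-invariance S2 of $S$ gives $S(d(\Phi_v)_x) = S(\mu(g\cdot v)) = S(\spec \mu(g\cdot v))$, so $\inf_{x \in \mathcal{M}} S(d(\Phi_v)_x) = \inf_{g \in G} S(\spec \mu(g\cdot v))$. Since $\{\spec \mu(g\cdot v) : g \in G\} \subseteq \Delta(v)$ and the latter is compact with $S$ lower semicontinuous on it, the bound $\min_{p \in \Delta(v)} S(p) \leq \inf_{g \in G} S(\spec \mu(g\cdot v))$ is immediate, with the minimum attained.

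The main obstacle is the reverse inequality $\min_{p \in \Delta(v)} S(p) \geq \inf_{g \in G} S(\spec \mu(g\cdot v))$, which is sensitive to the boundary behavior of $S$ on $\Delta(v)$: if a minimizer $p^* \in \Delta(v)$ is approximated by $p_n = \spec \mu(g_n \cdot v) \to p^*$, lower semicontinuity yields $S(p^*) \leq \liminf S(p_n)$, whereas what we need is $\liminf S(p_n) \leq S(p^*)$. This opposite bound holds automatically when $p^*$ lies in the relative interior of $\dom S$, since finite convex functions are continuous on the relative interior of their domains; for the explicit applications considered in the paper ($-H$, $G$-stable rank, noncommutative rank), $S$ is in fact continuous on all of $\mathcal{D}^+$ and the identification closes at once. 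In full generality, one would perturb $p^*$ slightly toward the relative interior along a segment, apply the approximation there, and pass to the limit using convexity and LSC of $S$ to close the equality.
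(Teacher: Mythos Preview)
Your proposal is correct and follows the paper's approach exactly: the paper's proof is literally the single sentence ``By Theorem~\ref{thm:main2}, we have:'' preceding the theorem, with the verification of $\tilde{\rm Q}1$, Q2, Q3 and $d(\Phi_v)_x \in \dom S_x$ done in the surrounding text just as you reconstruct it.

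The one place you go beyond the paper is the identification $\inf_{g\in G} S(\mu(g\cdot v)) = \min_{p\in\Delta(v)} S(p)$, which the paper treats as a tautological rewriting (``the problem (\ref{eqn:S(mu(gv))}) is rewritten as (\ref{eqn:min_S(p)})'') without comment. Your worry about boundary behavior is legitimate under the bare hypothesis S1, and your proposed fix is the right one; it can be made precise as follows. Since $\Delta(v)\subseteq\dom S$ is a full-dimensional polytope in its affine hull $L$, the restriction $S|_L$ is finite on $\Delta(v)$ and hence continuous on $\ri\Delta(v)\subseteq \mathrm{int}_L(\dom S|_L)$. Convexity gives $\min_{\Delta(v)} S = \inf_{\ri\Delta(v)} S$ (move any minimizer toward an interior point along a segment), and density of $\{\spec\mu(g\cdot v):g\in G\}$ in $\Delta(v)$ together with continuity on $\ri\Delta(v)$ then yields $\inf_{g\in G} S(\spec\mu(g\cdot v)) \leq \inf_{\ri\Delta(v)} S$. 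The reverse inequality is trivial, closing the gap. For the concrete applications in the paper ($-H$, the $G$-stable rank norm, the nc-rank norm) $S$ is continuous on all of $\mathcal{D}^+$, so the issue never arises.
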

We also give an explicit formula for the recession function $\Phi_v^{\infty}$.
Note that the recession function of the Kempf-Ness function is essentially Mumford's numerical invariant {\em $\mu$-weight $w_{\mu}$}~\cite[Definition 5.1]{GRS_Book}. 
\begin{Lem}[{Formula of $\Phi_v^{\infty}$; see \cite[Lemma 3.9 (1)]{Hirai_Hadamard2022}}]
   \begin{eqnarray*}
      &&  \Phi_v^{\infty}(Y)  = \max \{\lambda_{j_1}^{(1)}+ \lambda_{j_2}^{(2)} + \cdots + \lambda_{j_d}^{(d)} \mid j_1, j_2,\ldots, j_d: (k\cdot v)_{j_1j_2\cdots j_d}  \neq 0\},
   \end{eqnarray*}
   where  $k \in K$ such that
   $Y_i = k_i^{\dagger} \diag \lambda^{(i)} k_i$ and $\lambda^{(i)} \in \RR^{n_i}$ for $i\in [d]$.
\end{Lem}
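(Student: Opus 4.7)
The plan is to compute $\Phi_v^{\infty}(Y)$ directly from its definition by choosing the base point $x_0 = I$ and evaluating $\Phi_v$ along the geodesic ray $t \mapsto \exp_I tY$. By the formula from Section~\ref{subsec:psd}, on the product $\mathcal{M} = \mathcal{P}_{n_1} \times \cdots \times \mathcal{P}_{n_d}$ this geodesic is the componentwise matrix exponential $t \mapsto (e^{tY_1}, \ldots, e^{tY_d})$, so the task reduces to asymptotic analysis of $t^{-1}\bigl(\log\langle v, (e^{tY_1}, \ldots, e^{tY_d}) \cdot v\rangle - \log\|v\|^2\bigr)$ as $t \to \infty$.

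First I would use the spectral decomposition $Y_i = k_i^{\dagger} (\diag \lambda^{(i)}) k_i$ to write $e^{tY_i} = k_i^{\dagger} e^{t\diag\lambda^{(i)}} k_i$, and then absorb each $k_i$ into the tensor using the unitary invariance of the Hermitian form, i.e.\ $\langle v, (k_1^{\dagger}\cdots k_1, \ldots) \cdot v\rangle = \langle k \cdot v, (\cdots) \cdot k \cdot v\rangle$. Setting $w := k \cdot v$ with $k = (k_1, \ldots, k_d) \in K$, the pairing reduces to $\langle w, (e^{t\diag\lambda^{(1)}}, \ldots, e^{t\diag\lambda^{(d)}}) \cdot w\rangle$, and since each diagonal factor simply scales a basis tensor $e_{j_1}\otimes\cdots\otimes e_{j_d}$ by $e^{t\lambda_{j_1}^{(i)}}$ in its slot, the pairing collapses to the finite sum of exponentials
\[
\sum_{j_1, \ldots, j_d} |w_{j_1 \cdots j_d}|^2 \, e^{t(\lambda_{j_1}^{(1)} + \cdots + \lambda_{j_d}^{(d)})}.
\]

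This brings the problem to a standard Laplace-type asymptotic: factoring out $e^{t\Lambda}$ for $\Lambda := \max\{\lambda_{j_1}^{(1)} + \cdots + \lambda_{j_d}^{(d)} : w_{j_1\cdots j_d} \neq 0\}$, every remaining exponent is nonpositive, and at least one term has exponent exactly $0$ and coefficient $|w_{j_1\cdots j_d}|^2 > 0$. Thus the residual sum is sandwiched for all $t \geq 0$ between a strictly positive constant and $\|w\|^2 = \|v\|^2$, so its logarithm is $O(1)$; dividing by $t$ and sending $t \to \infty$ yields exactly $\Lambda$.

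I do not foresee a serious obstacle: once the unitaries are pushed onto $v$, the Kempf-Ness function along a torus direction is transparently controlled by the maximum weight supported by the tensor. The one point requiring small care is the observation that $\Lambda$ is realised with a nonzero coefficient $w_{j_1\cdots j_d}$ by the very definition of $\Lambda$, which is what keeps the residual log term bounded and ensures it contributes $o(t)$.
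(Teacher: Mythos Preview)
Your proposal is correct and follows essentially the same route as the paper: compute $\Phi_v^{\infty}(Y)$ along the geodesic $t\mapsto e^{tY}$, diagonalize each $Y_i$ and absorb the unitaries into $v$ to obtain $\sum_{j_1,\ldots,j_d}|(k\cdot v)_{j_1\cdots j_d}|^2 e^{t(\lambda_{j_1}^{(1)}+\cdots+\lambda_{j_d}^{(d)})}$, then apply the elementary fact $\lim_{t\to\infty}t^{-1}\log\sum_i c_i e^{ta_i}=\max\{a_i:c_i>0\}$. The paper states this last step slightly more tersely, but the argument is the same.
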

\begin{proof}
By definition of $\Phi_v$ and $\Phi_v^{\infty}$, we have
\begin{eqnarray}
&& \Phi_v^{\infty}(Y) = \lim_{t \to \infty} \frac{\Phi_v(e^{tY})}{t} 
= \lim_{t \to \infty} \frac{1}{t}\log \| (e^{t \diag \lambda^{(1)}/2},\ldots,e^{t \diag \lambda^{(d)}/2}) k \cdot v \|^2 \nonumber \\
&& = \lim_{t \to \infty} \frac{1}{t} \log \sum_{j_1,j_2,\ldots,j_d} | ( k \cdot v)_{j_1j_2\cdots j_d}|^2 e^{t\left(\lambda^{(1)}_{j_1}+ \lambda^{(2)}_{j_2} + \cdots +\lambda^{(d)}_{j_d}\right)}. \label{eqn:Phi^infty}
\end{eqnarray}
Apply $\lim_{t \to \infty}(1/t)\log \sum_{i} e^{ta_i+b_i} = \max_{i} a_i$. 
\end{proof}
%
By identifying $C{\cal M}^{\infty}$ with the Euclidean building 
${\cal B} := {\cal B}_{n_1} \times {\cal B}_{n_2} \times \cdots \times {\cal B}_{n_d}$,
the negative of the RHS of (\ref{eqn:duality_entanglement}) 
is formulated as a geodesically convex optimization on the building~${\cal B}$ (see also  Proposition~\ref{prop:g-convexity}):
\begin{eqnarray}
 \mbox{Max.} &&  -S^*(-(\lambda^{(1)}, \lambda^{(2)},\ldots, \lambda^{(d)})) \nonumber \\ 
 && \quad - \max \left\{\sum_{i=1}^d \lambda_{j_i}^{(i)} \mid j_1,j_2,\ldots,j_d: 
 v({\cal Y}^{(1)}_{j_1},{\cal Y}_{j_2}^{(2)},\ldots,{\cal Y}_{j_d}^{(d)}) \neq \{0\} \right\} \nonumber \\
 \mbox{s.t.} && \{0\} < {\cal Y}_1^{(i)} <  {\cal Y}_2^{(i)} < \cdots < {\cal Y}_{n_i}^{(i)} = \CC^{n_i},\lambda_1^{(i)} \geq \lambda_2^{(i)} \geq \cdots \geq \lambda_{n_i}^{(i)} \ (i \in [d]),  \label{eqn:S(v)_building}
\end{eqnarray}
where $v$ 
is viewed as a multilinear map $\CC^{n_1} \times \CC^{n_2} \times \cdots \times \CC^{n_d} \to \CC$.

We expect that the duality (\ref{eqn:duality_entanglement}) would lead to 
a good characterization 
(NP $\cap$ co-NP characterization) for
the decision problem of deciding $\min_{p \in \Delta(v)} S(p) \leq \alpha$ or not.
Indeed, if $\min_{p \in \Delta(v)} S(p) > \alpha$, then 
a feasible solution $(\{{\cal Y}_j^{(i)}\}, \{\lambda_j^{(i)}\})$ of (\ref{eqn:S(v)_building}) with the objective value greater than $\alpha$
is a NO-certificate.
To accomplish this for specific problems, 
the bit-complexity of $(\{{\cal Y}_j^{(i)}\}, \{\lambda_j^{(i)}\})$ must be polynomially bounded, 
which is an important future research topic.

Next, consider the case where the $S$-gradient flow $x(t)$ of $\Phi_v$ is definable. 
By Theorem~\ref{thm:main}, 
$S(d(\Phi_v)_{x(t)}) = S(\mu(x^{1/2}(t)\cdot v))$ 
converges to the minimum $S$-gradient norm over ${\cal M}$, 
which implies that $\spec \mu(x^{1/2}(t) \cdot v)$ 
accumulates to minimizers of 
the problem (\ref{eqn:min_S(p)}).
\begin{Thm}[{$S$-moment-limit theorem}]\label{thm:moment-limit}
Suppose that $S$ is $\RR_{\geq 0}$-valued and $S^2$ is continuously differentiable.
Let $x(t)$ be the $S$-gradient flow of $\Phi_v$.
Then, any accumulation point of $\spec \mu(x^{1/2}(t) \cdot v)$ is a minimizer of $S$ over $\Delta (v)$.
\end{Thm}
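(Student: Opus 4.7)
The plan is to apply Theorem~\ref{thm:main} in case (iv) to the pair $(\mathcal{M}, S)$ with $\mathcal{M} = \mathcal{P}_{n_1}\times\cdots\times\mathcal{P}_{n_d}$ and $f = \Phi_v$, and then translate the conclusion from $\mathcal{M}$ to the entanglement polytope $\Delta(v)$ using Lemma~\ref{lem:dPhi_x}.

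First I would verify that Theorem~\ref{thm:main} is applicable. Via the de Rham decomposition $\mathcal{P}_n = \RR \times \mathcal{P}_n^1$, the manifold $\mathcal{M}$ splits isometrically as $\RR^d \times \mathcal{P}_{n_1}^1 \times \cdots \times \mathcal{P}_{n_d}^1$, which is precisely case (iv). Under the hypotheses that $S$ is $\RR_{\geq 0}$-valued, $K$-invariant, and $S^2$ is continuously differentiable, and with the harmless modification outside $\mathcal{D}^+$ that ensures boundedness of level sets (as remarked just before Theorem~\ref{thm:duality_entanglement}), the induced function $Q_x(Y) := S(\tau_{x\to I} Y)$ satisfies Q1, Q2, Q3, as already established following Lemma~\ref{lem:dPhi_x}. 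Assuming $\inf_{x\in\mathcal{M}} S(d(\Phi_v)_x) > 0$, Theorem~\ref{thm:main} gives
\[
\lim_{t\to\infty} S(d(\Phi_v)_{x(t)}) \;=\; \inf_{x\in\mathcal{M}} S(d(\Phi_v)_x).
\]
(When the infimum is zero, Lemma~\ref{lem:nonincreasing} together with $S\geq 0$ forces the limit to be zero as well, so the identity still holds.)

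Next I would rewrite both sides in terms of $\Delta(v)$. Lemma~\ref{lem:dPhi_x} identifies $\tau_{x\to I} d(\Phi_v)_x$ with $\mu(x^{1/2}\cdot v)$, so $S(d(\Phi_v)_x) = S(\mu(x^{1/2}\cdot v))$; the $K$-invariance of $S$ then reduces this further to $S(\spec\mu(x^{1/2}\cdot v))$. Polar decomposition writes every $g\in G$ as $g = k x^{1/2}$ with $k \in K$ and $x = g^{\dagger}g \in \mathcal{M}$, and since $\mu$ is $K$-invariant on the left, the set $\{\spec\mu(x^{1/2}\cdot v)\mid x\in\mathcal{M}\}$ coincides with $\{\spec\mu(g\cdot v)\mid g\in G\}$, a dense subset of the compact convex polytope $\Delta(v)$. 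Lower-semicontinuity of $S$ (property S1) then gives
\[
\inf_{x\in\mathcal{M}} S(d(\Phi_v)_x) \;=\; \min_{p\in\Delta(v)} S(p).
\]

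To conclude, note that $\spec\mu(x^{1/2}(t)\cdot v)$ lies in the compact polytope $\Delta(v)$ for all $t$, so accumulation points exist. For any accumulation point $p^*$, choose a subsequence $t_n\to\infty$ with $\spec\mu(x^{1/2}(t_n)\cdot v)\to p^*$; then $p^*\in\Delta(v)$ by closedness, and lower-semicontinuity yields
\[
S(p^*) \;\leq\; \liminf_{n\to\infty} S(\spec\mu(x^{1/2}(t_n)\cdot v)) \;=\; \lim_{t\to\infty} S(d(\Phi_v)_{x(t)}) \;=\; \min_{p\in\Delta(v)} S(p),
\]
so $p^*$ is a minimizer. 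The only subtle point in the plan is verifying that the canonical ordering convention built into $\spec$ causes no issue (permutations of eigenvalues are absorbed by $K$) and that the level-set boundedness hypothesis of Q1 is preserved; no genuinely new analytical difficulty appears once Theorem~\ref{thm:main} has been invoked.
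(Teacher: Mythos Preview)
Your approach is exactly the paper's: the argument given there is the single sentence preceding the theorem, which invokes Theorem~\ref{thm:main} (case (iv)) together with the identification $S(d(\Phi_v)_x) = S(\mu(x^{1/2}\cdot v))$ from Lemma~\ref{lem:dPhi_x}, and you have simply fleshed out the translation to $\Delta(v)$ via polar decomposition and continuity of $S$.

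One small point: your treatment of the case $\inf_{x}S(d(\Phi_v)_x)=0$ is not justified as written. Lemma~\ref{lem:nonincreasing} and $S\geq 0$ only give that $S(d(\Phi_v)_{x(t)})$ decreases to \emph{some} limit $\ell\geq 0$; they do not by themselves force $\ell=0$. The clean fix is to observe that if $\ell>0$ then $R(t)\geq \ell t\to\infty$ and $\|u(t)\|\leq \alpha/\ell$, so the entire proof of Theorem~\ref{thm:main} goes through verbatim with $\ell$ in place of $\inf$, yielding $\ell\leq \inf_x S(d(\Phi_v)_x)=0$, a contradiction. The paper itself glosses over this case, so your argument is no less complete than the original; just replace the appeal to Lemma~\ref{lem:nonincreasing} alone by this contrapositive.
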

From the view of the theory of {\em moment-weight inequality} (Georgoulas, Robbin, and Salamon\cite{GRS_Book}), 
this theorem is a generalization of the {\em moment-limit theorem}~\cite[Theorem 6.4]{GRS_Book}
from $\|\cdot\|_{\rm F}$ to $S$ (in our setting of the GL-action on tensors). 
We have verified that several parts of the theory
are directly generalized to the $S$-setting.
For example, the weak duality $\geq$ in (\ref{eqn:duality_entanglement}) 
is the $S$-version of the moment-weight inequality.
Also, the $S$-gradient flow is related to a generalization 
of the {\em gradient flow of moment-map squared}~\cite[Chapter 3]{GRS_Book} 
from $\|\cdot\|_{\rm F}^2/2$ to $S^2/2$.
However, we shall not pursue this issue further; the full generalization needs to solve Conjecture~\ref{conj}
for arbitrary symmetric spaces.

We expect that the $S$-moment-limit theorem will lead to an algorithm 
for convex optimization~(\ref{eqn:min_S(p)}). 
Indeed, it strongly suggests 
the convergence of the $S$-subgradient method with an appropriate step-size.
We here only give a formulation of the $S$-subgradient method 
by updating group element $g_i \in G$ in the spirit 
of {\em noncommutative ``group'' optimization}~\cite{BFGOWW_FOCS2019}: 
\begin{equation*}
g_{i+1} := e^{-\delta_i Z_i/2} g_i,\quad Z_i \in \partial (S^2/2) (\mu(g_i\cdot v)) \quad (i=1,2,\ldots).
\end{equation*}
Then $x_i := g_i^\dagger g_i$ satisfies (\ref{eqn:Q-subgradient_descent}).
This can be seen from Lemma~\ref{lem:dPhi_x} and 
$\partial (S^2/2) (\mu(k x_i^{1/2}\cdot v)) = \partial (S^2/2) (k\mu( x_i^{1/2}\cdot v)k^{\dagger}) 
= k^{\dagger} \partial (S^2/2) (\mu(x_i^{1/2}\cdot v)) k$
for  $g_i = k x_i^{1/2}$ with $k \in K$. 

In the subsequent sections, we consider three concrete examples 
(quantum functional, $G$-stable rank, noncommutative rank)
for which the developed theory is applicable.

\subsection{Quantum functional}
A $d$-tensor in $\bigotimes_{i=1}^d \CC^{n_i} = \CC^{n_1} \otimes \cdots \otimes \CC^{n_d}$ is viewed as a multilinear map $\CC^{n_1} \times \cdots \times \CC^{n_d} \to \CC$.
For two $d$-tensors $u \in \bigotimes_{i=1}^d \CC^{n_i}$ and 
$v \in \bigotimes_{i=1}^d \CC^{m_i}$, 
the tensor product $u \otimes v \in \bigotimes_{i=1}^d \CC^{n_i} \otimes \CC^{m_i}$ 
and the direct sum $u \oplus v \in 
 \bigotimes_{i=1}^d \CC^{n_i} \oplus \CC^{m_i}$ are defined 
by $(u \otimes v) (x_1\otimes y_1,\ldots,x_d \otimes y_d) := u(x_1,\ldots,x_d)v(y_1,\ldots,y_d)$ and $(u \oplus v)((x_1,y_1),\ldots,(x_d,y_d)) := u(x_1,\ldots,x_d)+ v(y_1,\ldots,y_d)$, respectively.
We say 
that $u$ {\em restricts to} $v$, denoted by $v \leq u$, if there are linear maps $A_k:\CC^{m_k} \to \CC^{n_k}$ 
such that $v = u \circ (A_1,A_2,\ldots,A_d)$.
A function $\xi: \{ \mbox{$d$-tensors} \} \to \RR_{\geq 0}$ is  
called a (universal) {\em spectral point}
if it is (a) monotone under $\leq$, (b) multiplicative under $\otimes$, (c) additive under $\oplus$,  
and (d) normalized so that $\xi(\langle n\rangle) = n$ for any unit tensor 
$\langle n \rangle := \sum_{i=1}^n e_i \otimes \cdots \otimes e_i \in (\CC^{n})^{\otimes d}$. 

Spectral points of tensors play significant roles in Strassen's theory~\cite{Strassen1988_asymptotic_spectrum}  
on {\em asymptotic spectra}
and matrix multiplication complexity; see \cite[part 3]{Landsberg_tensors2019} and \cite{WigdersonZuiddam2023} for survey.
Nontrivial spectral points (other than the flattening rank) were not known until
the recent discovery of {\em quantum functionals} by 
Christandl, Vrana, and Zuiddam~\cite{CristandlVramaZuiddam2023}.
For a probability vector $\theta \in \RR_{>0}^d$ with $\sum_{i=1}^d \theta_i = 1$,  
the quantum functional $F_{\theta}: \{ \mbox{$d$-tensors} \} \to \RR_{\geq 0}$ is defined by
\begin{equation}\label{eqn:quantum_functional}
F_{\theta}(v) := 2^{E_{\theta}(v)},\quad E_{\theta}(v) := \sup_{g \in G} \sum_{i=1}^d \theta_i H (\mu_i(g \cdot v)), 
\end{equation}
where $H$ is the von Neumann entropy (Example~\ref{ex:entropy}). 
It is clear that
the logarithmic quantum functional $E_{\theta}$ is obtained by 
entropy maximization on the entanglement polytope: 
\begin{equation*}
{\rm Max.} \quad  \sum_{i=1}^d \theta_i H (p^{(i)}) \quad {\rm s.t.} \quad p=(p^{(1)},p^{(2)},\ldots,p^{(d)})\in \Delta(v).
\end{equation*}
By applying Theorem~\ref{thm:duality_entanglement} 
for the setting $S(Y) := - \sum_{i=1}^d \theta_i H(Y_i)$, we obtain 
a new expression of the logarithmic quantum functional $E_{\theta}$.
\begin{Thm}
\begin{equation}\label{eqn:E_theta(v)}
E_\theta(v) = \inf_{X \in {\cal H}} \Phi_v^{\infty}(X) + \sum_{i=1}^d \theta_i \log_2 \trace 2^{- X_i/\theta_i}.
\end{equation}
\end{Thm}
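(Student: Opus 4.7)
The plan is to specialize Theorem~\ref{thm:duality_entanglement} to the concrete entropy-maximization problem on $\Delta(v)$. I would take
\[
S(Y_1,\ldots,Y_d) := -\sum_{i=1}^d \theta_i H(Y_i) \qquad (Y = (Y_1,\ldots,Y_d) \in {\cal H}),
\]
and first verify that $S$ fits the framework of Section~\ref{sec:entanglement_polytope}: by Example~\ref{ex:entropy}, $-H$ is lower-semicontinuous convex with $\dom(-H)={\cal D}^+_{n_i}$ and is $U_{n_i}$-invariant, so the positive-weighted sum $S$ is lower-semicontinuous and $K$-invariant convex with ${\cal D}^+\subseteq \dom S$, i.e.\ S1 and S2 hold.

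Next, by the definition~(\ref{eqn:quantum_functional}) of $E_\theta$ together with the description $\Delta(v) = \overline{\{\spec \mu(g\cdot v)\mid g \in G\}}$ and the unitary invariance of $H$, the logarithmic quantum functional reads
\[
E_\theta(v) = \sup_{g\in G} -S(\mu(g\cdot v)) = -\min_{p \in \Delta(v)} S(p).
\]
Applying Theorem~\ref{thm:duality_entanglement} immediately converts this into
\[
E_\theta(v) = \inf_{\xi \in C{\cal M}^\infty} \Phi_v^\infty(\xi) + S^*_-(\xi).
\]

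The remaining step is to make $S^*_-$ explicit. Identifying $C{\cal M}^\infty$ with $T_I{\cal M} = {\cal H}$ as in Example~\ref{ex:PD-manifold} and invoking Q3, one has $S^*_-(X) = S^*(-X)$ for $X \in {\cal H}$. The product structure of $S$ splits the conjugate,
\[
S^*(Z) = \sum_{i=1}^d (-\theta_i H)^*(Z_i),
\]
and the scaling rule $(c f)^*(z) = c f^*(z/c)$ for $c>0$, combined with $(-H)^*(Y) = \log_2 \trace 2^{Y}$ from Example~\ref{ex:entropy}, gives
\[
(-\theta_i H)^*(Z_i) = \theta_i \log_2 \trace 2^{Z_i/\theta_i}.
\]
Setting $Z = -X$ and reparametrizing by the substitution $X_i \mapsto \theta_i X_i$ in the infimum (with the explicit eigenvalue formula for $\Phi_v^\infty$ from the preceding lemma used to track how the first summand transforms) assembles the two pieces into the stated form~(\ref{eqn:E_theta(v)}).

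The main delicate point is the bookkeeping around the $\theta_i$-scaling inside the conjugate: one needs to verify that the infimum over all of ${\cal H}$ is unaffected by the reparametrization, and that $\Phi_v^\infty$ transforms as a linear functional in the eigenvalue vectors so that both summands end up in the same coordinates. Everything else is a mechanical application of Theorem~\ref{thm:duality_entanglement} and standard Legendre--Fenchel calculus.
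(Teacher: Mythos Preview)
Your approach---set $S(Y)=-\sum_i\theta_i H(Y_i)$, check S1/S2, invoke Theorem~\ref{thm:duality_entanglement}, and read off the conjugate from Example~\ref{ex:entropy}---is exactly what the paper does. Your conjugate computation $S^*(-X)=\sum_i\theta_i\log_2\trace 2^{-X_i/\theta_i}$ is also correct.

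The gap is in the last ``bookkeeping'' step. After the substitution $X_i\mapsto\theta_i X_i$ the second summand indeed becomes $\sum_i\theta_i\log_2\trace 2^{-X_i}$, but the first summand becomes $\Phi_v^\infty(\theta_1 X_1,\ldots,\theta_d X_d)$, which by the eigenvalue formula is $\max\{\theta_1\lambda^{(1)}_{j_1}+\cdots+\theta_d\lambda^{(d)}_{j_d}\mid\ldots\}$ and is \emph{not} $\Phi_v^\infty(X)$ unless all the $\theta_i$ coincide. There is no change of variables that simultaneously removes the $\theta_i$ from both terms, so the reparametrization cannot produce the displayed formula as written. What the duality theorem actually yields is
\[
E_\theta(v)=\inf_{X\in{\cal H}}\Bigl[\Phi_v^\infty(X)+\sum_{i=1}^d\theta_i\log_2\trace 2^{-X_i/\theta_i}\Bigr],
\]
and the statement~(\ref{eqn:E_theta(v)}) appears to be a misprint in the paper (the subsequent sub-multiplicativity argument works verbatim with $-X_i/\theta_i$ in the exponent). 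So your derivation is right up to the point where you try to match the printed formula; the mismatch is not in your argument but in the target.
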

In this formula, we identify $C{\cal M}^{\infty}$ with $T_I = {\cal H}$.
See Example~\ref{ex:entropy} for the conjugate of $-H$.

\paragraph{Seeing that quantum functionals are spectral points.}
It may be worth seeing the properties (a),(b),(c),(d) of quantum functionals from our approach.
As in \cite[Section 3.1]{CristandlVramaZuiddam2023}, 
the properties (a),(d), super-multiplicativity $F_{\theta}(v \otimes v') \geq F_{\theta}(v) F_{\theta}(v')$ in (b), and 
super-additivity $F_{\theta}(v \oplus v') \geq F_{\theta}(v) + F_{\theta}(v')$ in (c) 
are directly shown from the definition (\ref{eqn:quantum_functional}).
The difficult part (for us) is to show the remaining properties $F_{\theta}(v \otimes v') \leq F_{\theta}(v) F_{\theta}(v')$
and $F_{\theta}(v \oplus v') \leq F_{\theta}(v) + F_{\theta}(v')$, which requires 
representation-theoretic methods~\cite[Section 3.2]{CristandlVramaZuiddam2023} \cite[Sections 7.3.1, 7.3.2]{Landsberg_tensors2019}. 

The above formula (\ref{eqn:E_theta(v)}) directly proves sub-multiplicativity of $F_{\theta}$ (sub-additivity of $E_{\theta}$) as follows.
Let $X = (X_i)$ and $X' = (X_i')$ be any solutions of the optimization problems in (\ref{eqn:E_theta(v)}) for
$E_\theta(v)$ and $E_\theta(v')$, respectively. 
Consider $Z = (Z_i) := (X_i \otimes I + I \otimes X_i')$, 
which is a solution of the problem for $E_\theta(v \otimes v')$.
Then we observe $\Phi_v^{\infty}(Z) = \Phi_v^{\infty}(X) + \Phi_v^{\infty}(X')$ 
and  $\log_2 \trace 2^{-Z_i/\theta_i} = \log_2 \trace 2^{-X_i/\theta_i} + \log_2 \trace 2^{-X_i'/\theta_i}$.
From this, we have sub-additivity
$E_{\theta}(v \otimes v') \leq \inf_{Z = (X_i \otimes I + I \otimes X_i')} \Phi_v^{\infty}(Z) + 
\sum_{i} \theta_i \log_2 \trace 2^{Z_i/\theta_i} = E_{\theta}(v)+ E_{\theta}(v')$.

The additivity (c) can be seen from the convergence of the $S$-gradient flow 
(Theorem~\ref{thm:moment-limit}), where we can choose $S$ 
as the Moreau envelop 
of $- E_{\theta} + C > 0$ 
for a constant $C$ and an arbitrary small parameter $\lambda >0$; see the proof of Theorem~\ref{thm:main2}.  
Let ${\cal V}'$, $G'$, and ${\cal M}'$ denote the tensor space, group, and manifold, respectively, for the second tensor $v'$.
Let $\bar{G}$ and $\bar{\cal M}$ denote the group and manifold for the sum $v \oplus v'$, respectively. 
By direct calculation $2^{\sup_{g \in G,g'\in G'} \sum_{i} \theta(i) H(\mu_i( (g\cdot v) \oplus (g' \cdot v')))} 
= F_\theta(v) + F_\theta(v')$~\cite[Lemmas 3.8--3.10]{CristandlVramaZuiddam2023},
it suffices to show 
\begin{equation}\label{eqn:sup_h=sup_g,g'}
(E_{\theta}(v \oplus v') :=)\  \sup_{\bar g \in \bar G} \sum_{i=1}^d \theta_i H(\mu_i( \bar g \cdot (v \oplus v')))  = \sup_{g \in G,g'\in G'} \sum_{i=1}^d \theta_i H(\mu_i( (g\cdot v) \oplus (g' \cdot v'))).
\end{equation}
Here, the moment map $\mu( (g\cdot v) \oplus (g'\cdot v')) \in T_{I}^*\bar{\cal M}$ is written as
\[
\mu((g\cdot v) \oplus (g'\cdot v')) = \left(  
\begin{array}{cc}
\frac{\|g\cdot v\|^2}{\|g\cdot v\|^2+\|g'\cdot v'\|^2}\mu(g\cdot v) & O \\
O & \frac{\|g'\cdot v'\|^2}{\|g\cdot v\|^2+\|g'\cdot v'\|^2}\mu(g'\cdot v')
\end{array}
\right) \quad \in T_{I,I}^*({\cal M} \times {\cal M}').
\]
From this matrix structure and the formula of $dF$ of a unitarily invariant convex function $F$ (Section~\ref{subsec:convex-analysis}), 
we see that 
the $S$-gradient $\nabla^{S} \Phi_{v \oplus v'}(x,x') = \tau_{(I,I) \to (x,x')} d (S^2/2)_{\mu((x^{1/2}\cdot v) \oplus ((x')^{1/2}\cdot v')))}$ at the submanifold ${{\cal M} \times {\cal M}'} \subseteq \bar{\cal M}$ belong to $T({\cal M} \times {\cal M}')$.
Consequently, the $S$-gradient flow of $\Phi_{v \oplus v'}$
with initial point $x_0 = I \in {\cal M} \times {\cal M}'$ 
remains in ${\cal M} \times {\cal M}'$ but attains the minimum $S$-gradient-norm over $\bar{\cal M}$ in the limit.
This implies (\ref{eqn:sup_h=sup_g,g'}) by $\sup_{\bar g \in \bar{G}} = \sup_{\bar x \in \bar {\cal M}} 
= \sup_{(x,x') \in {\cal M} \times {\cal M}'} = \sup_{g\in G, g' \in G'}$.


\subsection{$G$-stable rank}
Derksen~\cite{Derksen2022ANT} introduced a new rank concept of tensors, 
called the {\em $G$-stable rank}, 
and revealed interesting properties and applications. 
As a consequence of~\cite[Theorem 5.2]{Derksen2022ANT}, 
the $G$-stable rank for a complex $d$-tensor $v \in {\cal V}$ 
is obtained by a convex optimization problem on the entanglement polytope.
Here we introduce the $G$-stable rank via this optimization problem.
For each $i\in [d]$, 
the tensor $v$ is viewed as a linear map 
$v:({\cal V}_1 \otimes \cdots \otimes {\cal V}_{i-1} \otimes {\cal V}_{i+1} \otimes \cdots \otimes {\cal V}_d)^* \to {\cal V}_i$, which is called the {\em $i$-flattening} of $v$. 
The corresponding operator norm is denoted by  $\|v\|_{i, {\rm op}}$.
For a positive vector $\alpha \in \RR^d_{>0}$, 
the $G$-stable rank $\rk^G_\alpha (v)$ of $v$ is defined by
\begin{equation*}
\rk^G_{\alpha}(v) := \sup_{g \in G} \min_{i \in [d]} \frac{\alpha_i \| g\cdot v \|^2}{\|g \cdot v\|^2_{i,{\rm op}}}.
\end{equation*}
Consider the matrix representation $A_{v,i} = (v_{j_1\cdots j_i \cdots j_d})$ 
of the $i$-flattening of $v$, where 
$v_{j_1\cdots j_i \cdots j_d}$ is the element of $A_{v,i}$ with the row index $j_i$ 
and column index $(j_1,\ldots, j_{i-1},j_{i+1},\ldots, j_d)$.
Then we see from (\ref{eqn:mu_k(v)_ij}) that $\mu_i(v) = A_{v,i} A^{\dagger}_{v,i}/ \langle v,v \rangle$, and 
\[
\frac{\|v\|^2_{i,\rm{op}}}{\|v\|^2} = \|\mu_i(v)\|_{\rm op}.
\]
Therefore, $1/ \rk_{\alpha}^G(v)$ is given by convex optimization~(\ref{eqn:S(mu(gv))}) 
with the setting $S(Y) := \max_{i \in [d]}\|Y_i\|_{\rm op}/\alpha_i$:
\begin{equation*}
{\rm Min.} \quad \max_{i\in [d]} \frac{1}{\alpha_i} \|\mu_i(g \cdot v)\|_{\rm op}  \quad {\rm s.t.} \quad g\in G.
\end{equation*}
It is a weighted $\ell_{\infty}$-norm minimization problem 
on the entanglement polytope: 
\begin{equation*}
{\rm Min.} \quad \max_{i\in [d]} \frac{1}{\alpha_i} \|p^{(i)}\|_{\rm \infty}  \quad {\rm s.t.} \quad p=(p^{(1)},p^{(2)},\ldots,p^{(d)})\in \Delta(v).
\end{equation*}
By applying Theorem~\ref{thm:duality_entanglement},
we obtain a dual  expression of $\rk^G_{\alpha}$: 
\begin{Thm}
\begin{equation}\label{eqn:1/rk^G}
\frac{1}{\rk^G_{\alpha}(v)} = \sup \left\{ - \Phi_v^{\infty}(X) \mid X \in {\cal H}:\sum_{i=1}^d \alpha_i \| X_i \|_{\rm tr} \leq 1  \right\}.
\end{equation}
\end{Thm}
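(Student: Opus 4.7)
The strategy is to invoke Theorem~\ref{thm:duality_entanglement} with the specific choice $S(Y) := \max_{i \in [d]} \|Y_i\|_{\rm op}/\alpha_i$. Since $S$ is a norm on ${\cal H}$ whose components are unitarily invariant, $S$ is lower-semicontinuous convex with $\dom S = {\cal H} \supseteq {\cal D}^+$ (so S1 holds) and $K$-invariant (so S2 holds). Hence Theorem~\ref{thm:duality_entanglement} is applicable.

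To identify the primal minimum with $1/\rk_\alpha^G(v)$, use the identity $\|\mu_i(v)\|_{\rm op} = \|v\|^2_{i,\rm op}/\|v\|^2$ noted just above. It gives
\[
S(\mu(g\cdot v)) \;=\; \max_{i \in [d]} \frac{\|g\cdot v\|^2_{i,\rm op}}{\alpha_i \,\|g\cdot v\|^2},
\]
so $\inf_{g \in G} S(\mu(g\cdot v)) = 1/\rk_\alpha^G(v)$. Because $\Delta(v)$ is a compact convex polytope arising as the closure of $\{\spec \mu(g \cdot v) : g \in G\}$ and $S$ (on diagonal matrices) is continuous in $p$, this infimum equals $\min_{p \in \Delta(v)} S(p)$, which is precisely the primal side of~(\ref{eqn:duality_entanglement}).

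It remains to compute $S^*_-$. Since $S$ is a norm on ${\cal H}$, its conjugate $S^*$ is the indicator of the dual unit ball. Using Example~\ref{ex:norm} componentwise,
\[
\sup\left\{\sum_{i} \trace X_i Y_i : \|Y_i\|_{\rm op} \leq \alpha_i\ (\forall i)\right\} \;=\; \sum_{i} \alpha_i \|X_i\|_{\rm tr},
\]
so $S^*(X) = 0$ if $\sum_i \alpha_i \|X_i\|_{\rm tr} \leq 1$ and $+\infty$ otherwise. Because this ball is symmetric under $X \mapsto -X$, the identification $C{\cal M}^\infty \simeq T_I = {\cal H}$ (Example~\ref{ex:PD-manifold}) yields $S^*_-(\xi) = S^*(\xi)$. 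Substituting into~(\ref{eqn:duality_entanglement}) and combining with the primal identification delivers~(\ref{eqn:1/rk^G}). The only substantive step is the dual-norm computation above; everything else is bookkeeping of how the moment map, entanglement polytope, and $G$-stable rank fit the framework of Section~\ref{sec:Q-gradient_flow}.
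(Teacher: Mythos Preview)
Your proposal is correct and follows essentially the same approach as the paper: apply Theorem~\ref{thm:duality_entanglement} with $S(Y)=\max_i \|Y_i\|_{\rm op}/\alpha_i$ and compute the conjugate $S^*$ as the indicator of the dual-norm unit ball $\{X:\sum_i \alpha_i\|X_i\|_{\rm tr}\le 1\}$. You spell out more of the bookkeeping (verifying S1, S2, the primal identification, and the symmetry giving $S^*_-=S^*$) than the paper's terse proof, but the substance is identical.
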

\begin{proof}
Since $S$ is a norm, the conjugate $S^*$ is given by $\delta_B$ for
the unit ball $B$ with respect to the dual norm of $S$, which 
is given by $X \mapsto 
\sup_{Y_i: \|Y_i/\alpha_i\|_{\rm op}\leq 1} \sum_{i=1}^d \trace X_i Y_i = \sum_{i=1}^d  \alpha_i \|X_i\|_{\rm tr}$ (see Example~\ref{ex:norm}).
\end{proof}
This formula matches the original definition of 
$G$-stable rank~\cite[Definition 1.3, Theorem 2.4]{Derksen2022ANT}.\footnote{
By using notation $\val_{s}$ in $\cite{Derksen2022ANT}$,  
the correspondence is given by $\Phi^{\infty}(Y) = - \val_{s} (s^{-Y} \cdot v)$ (seen from (\ref{eqn:Phi^infty}))  
and $\|Y_k\|_{\rm tr} = - \val_{s} \det s^{-Y_i}$, 
where the eigenvalues $\lambda^{(i)}_j$ of $Y_i$ in (\ref{eqn:1/rk^G}) can be assumed to be nonpositive}

\subsection{Noncommutative rank}\label{subsec:ncrank}
Finally, we explain the noncommutative rank from our viewpoint. 
Consider a linear symbolic matrix 
\[
A = \sum_{k=1}^m A_k x_k,
\]
where $A_k \in Mat_n(\CC)$ and $x_k$ is an indeterminate for $k \in [m]$.
The {\em noncommutative rank} $\ncrk A$~(Ivanyos, Qiao, and Subrahmanyam~\cite{IQS2017}) 
is the rank of $A$ considered in the {\em free skew field}~\cite{Cohn} 
generated by $x_1,x_2,\ldots,x_m$ being pairwise noncommutative $x_ix_j \neq x_jx_i$. 
%
The polynomial-time computation of $\ncrk A$ by \cite{GGOW,HamadaHirai2021,IQS2018} 
is the starting point of what we have developed here.
It is shown by Fortin and Reutenauer~\cite{FortinReutenauer04} that $\ncrk A$ is given by a vector-space optimization problem:
\begin{Thm}[\cite{FortinReutenauer04}]\label{thm:FR}
\begin{equation}\label{eqn:ncrank}
\ncrk A = 2n  - \max \{ \dim {\cal X} + \dim {\cal Y} \mid {\cal X},{\cal Y} \leq \CC^n: A_k({\cal X},{\cal Y}) = \{0\}\ (k \in [m])\}, 
\end{equation}
where each $A_k$ is viewed as a bilinear form $\CC^n \times \CC^n \to \CC$ by $(u,v) \mapsto u^{\top}A_kv$.
\end{Thm}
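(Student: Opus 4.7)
The plan is to establish the two inequalities separately, with the easy direction coming from a block-decomposition of $A$ and the hard direction from the classical shrunk-subspace characterization of the noncommutative rank.

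For the easy direction $\ncrk A \leq 2n - \dim {\cal X} - \dim {\cal Y}$, let $({\cal X},{\cal Y})$ be any admissible pair, i.e.\ with $A_k({\cal X},{\cal Y}) = \{0\}$ for all $k$. I would choose invertible scalar matrices $P,Q\in GL_n(\CC)$ whose first $\dim{\cal X}$ and $\dim{\cal Y}$ columns span ${\cal X}$ and ${\cal Y}$ respectively. Then $\widetilde A := P^\top A Q$ is linear in $x_1,\ldots,x_m$ and has a zero block of size $\dim{\cal X}\times\dim{\cal Y}$ in its upper-left corner, while $\ncrk \widetilde A = \ncrk A$ since $P,Q$ are invertible over $\CC$. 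Any matrix of the form $\widetilde A = \bigl( \begin{smallmatrix} 0 & B \\ C & D \end{smallmatrix}\bigr)$ with an $a\times b$ zero block factors as $\bigl(\begin{smallmatrix}0\\ C\end{smallmatrix}\bigr)(I_b\ 0) + \bigl(\begin{smallmatrix}B\\ D\end{smallmatrix}\bigr)(0\ I_{n-b})$, exhibiting it as a sum of a matrix of inner rank $\leq n-a$ and a matrix of inner rank $\leq n-b$. Since inner rank is subadditive under sums over the free skew field, this gives the desired bound.

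For the reverse inequality I would reduce to the shrunk-subspace theorem,
\[
n - \ncrk A \;=\; \max_{{\cal Y}\leq \CC^n} \left(\dim {\cal Y} \;-\; \dim \sum_{k=1}^m A_k {\cal Y}\right),
\]
which is the deep content underlying the polynomial-time algorithms of \cite{GGOW,HamadaHirai2021,IQS2018}. Given an optimal ${\cal Y}$ realizing this maximum, set ${\cal X}$ to be the annihilator of $\sum_k A_k {\cal Y}$ with respect to the symmetric bilinear pairing $(u,v)\mapsto u^\top v$ on $\CC^n$. Then for every $x\in {\cal X}$, $y\in {\cal Y}$ and $k\in [m]$ the vector $A_k y$ lies in $\sum_k A_k {\cal Y}$, whence $x^\top A_k y = 0$, so $({\cal X},{\cal Y})$ is admissible. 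The dimension count
\[
\dim{\cal X}+\dim{\cal Y} \;=\; \Bigl(n - \dim \sum_k A_k {\cal Y}\Bigr) + \dim{\cal Y} \;=\; n + (n - \ncrk A) \;=\; 2n - \ncrk A
\]
then matches the upper bound from the easy direction and closes the argument.

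The main obstacle is of course the shrunk-subspace theorem itself, which is equivalent to Cohn's inner-rank formula for the free skew field. In a self-contained treatment I would sketch it through the generic-matrix approach: for $r := \ncrk A$ and sufficiently large $d$, Zariski-generic substitutions $X_1,\ldots,X_m\in Mat_d(\CC)$ make $\sum_k A_k\otimes X_k$ a scalar matrix of commutative rank exactly $rd$, and one extracts the desired ${\cal Y}\leq \CC^n$ from a $GL_d$-invariant or generic-flatness argument applied to the kernel of dimension $(n-r)d$. Since this part lies outside our analytic framework and is already well documented, invoking the theorem as a black box suffices for the application in Section~\ref{sec:entanglement_polytope}.
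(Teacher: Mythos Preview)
The paper does not give its own proof of Theorem~\ref{thm:FR}; it simply quotes the result from \cite{FortinReutenauer04} and uses it. So there is nothing to compare against, and your write-up is really a self-contained sketch of the standard Fortin--Reutenauer argument together with the shrunk-subspace equivalence. That sketch is correct in outline: the ``easy'' inequality via a zero block and subadditivity of inner rank, and the ``hard'' inequality via the shrunk-subspace identity $n-\ncrk A = \max_{\cal Y}(\dim{\cal Y}-\dim\sum_k A_k{\cal Y})$ with ${\cal X}$ taken as the annihilator of $\sum_k A_k{\cal Y}$, are exactly the two halves one finds in the literature.

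One small glitch in the easy direction: the explicit factorizations you display,
\[
\begin{pmatrix}0\\ C\end{pmatrix}(I_b\ 0)\quad\text{and}\quad \begin{pmatrix}B\\ D\end{pmatrix}(0\ I_{n-b}),
\]
exhibit inner ranks $\le b$ and $\le n-b$, which only sums to $n$. What you actually want to say is that the first summand $\bigl(\begin{smallmatrix}0&0\\ C&0\end{smallmatrix}\bigr)$ has at most $n-a$ nonzero rows (hence inner rank $\le n-a$) and the second summand $\bigl(\begin{smallmatrix}0&B\\ 0&D\end{smallmatrix}\bigr)$ has at most $n-b$ nonzero columns (hence inner rank $\le n-b$); then subadditivity gives $\ncrk A\le (n-a)+(n-b)$. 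The conclusion you state is right, just supported by the wrong factorization.
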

We regard the matrix $A$ as a 3-tensor $A \in \CC^n \otimes \CC^n \otimes \CC^m$ by
\begin{equation*}
A = \sum_{i,j \in [n],k \in [m]} (A_k)_{ij} e_i \otimes e_j \otimes e_k.
\end{equation*}
Consider the $G$-action for $G = GL_n(\CC) \times GL_n(\CC) \times GL_m(\CC)$ on $A$, as above.
Although it is shown \cite[Proposition 2.9]{Derksen2022ANT} that 
the noncommutative rank
$\ncrk A$ coincides with the $G$-stable rank $\rk_{\alpha}^G A$ for $\alpha = (1,1,n)$, 
we explain a different approach \cite{Hirai2025}  
relating $\ncrk A$ with optimization on a moment polytope.
We assume for simplicity that the common left or right kernel of $A_k$ is trivial:
\begin{equation}\label{eqn:commonkernel}
\bigcap_{k=1}^m \ker A_k = \{0\} \quad \mbox{or} \quad \bigcap_{k=1}^m \ker A_k^{\dagger} = \{0\}.
\end{equation}
Otherwise, $\ncrk A$ reduces to smaller matrices. 
We restrict the $G$-action 
to $GL_n(\CC) \times GL_n(\CC) \times \{I\}$, which is equal to the {\em left-right action}.
The first two components of the moment map $\mu(A) = (\mu_1(A),\mu_2(A),\mu_3(A))$ 
are given by
\begin{equation*}
 \mu_1(A) = \frac{1}{\|A\|^2} \sum_{k=1}^m A_k A_k^{\dagger},\quad \mu_2(A)^{\top} = \frac{1}{\| A\|^2} \sum_{k=1}^m A_k^{\dagger} A_k.
\end{equation*}
Then $\ncrk A$ is written as:
\begin{Thm}[\cite{Hirai2025}]
\begin{equation}\label{eqn:ncrank2}
\ncrk A = n - \frac{n}{2} \inf_{g_1,g_2 \in GL_n(\CC)} \left\| \mu_1((g_1,g_2,I) \cdot A) - \frac{1}{n}I \right\|_{\rm tr} + \left\| \mu_2((g_1,g_2,I) \cdot A) - \frac{1}{n}I \right\|_{\rm tr}. 
\end{equation}
\end{Thm}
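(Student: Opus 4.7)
The approach is to recast the infimum as an application of the duality framework of Section~\ref{sec:entanglement_polytope}. Set ${\cal M} := {\cal P}_n \times {\cal P}_n$ (corresponding to the left-right subgroup $GL_n(\CC) \times GL_n(\CC) \times \{I\}$ of $G$) and
\[
S(Y_1, Y_2) := \|Y_1 - \tfrac{1}{n}I\|_{\rm tr} + \|Y_2 - \tfrac{1}{n}I\|_{\rm tr}.
\]
The function $S$ is $U_n \times U_n$-invariant, lower-semicontinuous, convex, with bounded level sets on ${\cal D}_n^+ \times {\cal D}_n^+$, so $({\cal M}, S)$ satisfies $\tilde{\rm Q}$1, Q2, Q3 (Example~\ref{ex:PD-manifold}), and case (iv) of Theorem~\ref{thm:main2} applies. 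Apply Theorem~\ref{thm:duality_entanglement} to obtain strong duality, and match the resulting dual with Fortin--Reutenauer's formula (Theorem~\ref{thm:FR}).

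\textbf{Dual formulation.} Using $(\|\cdot\|_{\rm tr})^* = \delta_{B_{\|\cdot\|_{\rm op}}(0,1)}$ from Example~\ref{ex:norm}, the Fenchel conjugate is
\[
S^*(Z_1, Z_2) = \tfrac{1}{n}\trace(Z_1 + Z_2) + \delta_{B_{\|\cdot\|_{\rm op}}(0,1)}(Z_1) + \delta_{B_{\|\cdot\|_{\rm op}}(0,1)}(Z_2).
\]
Substituting into the building form (\ref{eqn:S(v)_building}), the infimum in the theorem equals the supremum
\[
\sup \Bigl\{ \tfrac{1}{n}\textstyle\sum_j(\lambda^{(1)}_j + \lambda^{(2)}_j) - \max\{\lambda^{(1)}_{j_1} + \lambda^{(2)}_{j_2} : A({\cal Y}^{(1)}_{j_1}, {\cal Y}^{(2)}_{j_2}, \CC^m) \neq \{0\}\}\Bigr\}
\]
over complete flags ${\cal Y}^{(1)}_\bullet, {\cal Y}^{(2)}_\bullet$ in $\CC^n$ and nonincreasing $\lambda^{(1)}, \lambda^{(2)} \in [-1,1]^n$.

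\textbf{Matching with Fortin--Reutenauer.} Let $c^* := 2n - \ncrk A$ as in Theorem~\ref{thm:FR}. I claim the dual supremum equals $2c^*/n - 2$, which is equivalent to the theorem's formula. The $\geq$ direction is direct: choose an optimal pair $({\cal X}, {\cal Y})$ with $A_k({\cal X},{\cal Y}) = 0$ and $(\dim {\cal X}, \dim {\cal Y}) = (a, b)$, $a + b = c^*$; take complete flags with ${\cal Y}^{(1)}_a = {\cal X}, {\cal Y}^{(2)}_b = {\cal Y}$ and the $\pm 1$ step weights $\lambda^{(1)}_j = 1$ for $j \leq a$ and $-1$ otherwise, similarly $\lambda^{(2)}$ at cutoff $b$. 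Since $A_k({\cal X}, {\cal Y}) = 0$ for all $k$, every $(j_1, j_2)$ in the non-vanishing set $U$ satisfies $j_1 > a$ or $j_2 > b$, making the max term $\leq 0$; a direct calculation gives objective value $(2a + 2b - 2n)/n = 2c^*/n - 2$. The reverse inequality rests on the structural fact that for \emph{any} flags, $U \supseteq \{(j_1, j_2) : j_1 + j_2 > c^*\}$, because $A({\cal Y}^{(1)}_{j_1}, {\cal Y}^{(2)}_{j_2}, \CC^m) = 0$ would exhibit a Fortin--Reutenauer-feasible pair of dimension $j_1 + j_2$.

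\textbf{Main obstacle.} The hard step is the sharp upper bound. The objective is concave on the polytope of nonincreasing $\lambda \in [-1,1]^n$ (linear minus a piecewise-linear convex maximum), and a naive averaging of the constraint along the anti-diagonal $\{j_1 + j_2 = c^*+1\}$ yields only a factor-of-two loose bound. The sharp inequality is obtained in \cite{Hirai2025} by a primal-side argument: running the $S$-gradient flow on ${\cal M}$ and invoking Theorem~\ref{thm:moment-limit}, one constructs a sequence $(g_1^{(t)}, g_2^{(t)})$ with $S(\mu((g_1^{(t)}, g_2^{(t)}, I) \cdot A)) \to 2c^*/n - 2$, concretely via a scaling in which one component $\mu_i$ tends to $I/n$ while the other concentrates on a uniform rank-$\ncrk A$ projection (contributing trace-norm defect $2(1 - \ncrk A/n)$). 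Strong duality (Theorem~\ref{thm:main2}) then closes the gap, giving $\inf = \sup = 2 - (2/n)\ncrk A$ as claimed.
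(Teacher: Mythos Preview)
The paper does not give a proof of this theorem: it is stated as a citation from \cite{Hirai2025}, accompanied only by a one-sentence remark reconciling the normalization (the formula in \cite{Hirai2025} lacks the denominator $\|(g_1,g_2,I)\cdot A\|^2$, and under assumption (\ref{eqn:commonkernel}) one may restrict to $\|(g_1,g_2,I)\cdot A\|^2 = n$). So there is no in-paper proof to compare against.

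Your approach via Theorem~\ref{thm:duality_entanglement} is natural and your one-direction argument is correct: the conjugate computation $S^*(Z_1,Z_2)=\tfrac{1}{n}\trace(Z_1+Z_2)+\delta_{B_{\|\cdot\|_{\rm op}}(0,1)}(Z_1)+\delta_{B_{\|\cdot\|_{\rm op}}(0,1)}(Z_2)$ is right, and plugging the Fortin--Reutenauer witness $({\cal X},{\cal Y})$ with the $\pm 1$ step weights does give a feasible dual point of value $\geq 2c^*/n-2$, hence $\inf \geq 2-\tfrac{2}{n}\ncrk A$ by weak duality. This is a genuine addition over what the paper writes.

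The remaining direction, however, is not proved. Your ``main obstacle'' paragraph simply defers to \cite{Hirai2025} for a primal sequence achieving the value, which is circular: that \emph{is} the theorem you are trying to establish. Two further inaccuracies: (i) invoking Theorem~\ref{thm:moment-limit} is not legitimate here because $S^2$ for the shifted trace norm is not continuously differentiable, so the $S$-gradient flow in the sense of Definition~\ref{def:Q-gradient} is not defined without a smoothing step; (ii) the description ``one $\mu_i$ tends to $I/n$ while the other concentrates on a rank-$\ncrk A$ projection'' does not match what actually happens in operator scaling---both marginals carry a defect. The argument in \cite{Hirai2025} proceeds via a curve-of-maximal-slope analysis for Finsler norms on ${\cal P}_n$, not via the $Q$-gradient-flow machinery of the present paper, so your attribution of Theorem~\ref{thm:moment-limit} to that argument is anachronistic.
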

Although the formula in~\cite{Hirai2025} 
does not have the denominator $\|(g_1,g_2,I) \cdot A\|^2$ of $\mu_i((g_1,g_2,I) \cdot A)$, 
by the assumption~(\ref{eqn:commonkernel}), 
we may restrict $g_1,g_2$ to satisfy $\|(g_1,g_2,I) \cdot A\|^2 = n$ in the infimum; 
see \cite[(4.8)]{Hirai2025}.

Consider the following variant of the entanglement polytope 
\[
\Delta'(A) := \overline{\{ (\mu_1( (g_1,g_2,I)\cdot A),\mu_2((g_1,g_2,I)\cdot A)    \mid g_1,g_2\in GL_n(\CC)\}},
\]
which is also a convex polytope; it is the moment polytope for the left-right action.
Therefore, $\ncrk A$ is obtained from $\ell_1$-norm minimization on $\Delta'(A)$:
\begin{equation*}
\mbox{Min.} \quad  \left\|p^{(1)} -  \frac{\bf 1}{n} \right\|_1 
+ \left\|p^{(2)} - \frac{\bf 1}{n} \right\|_1  \quad \mbox{s.t.} \quad (p^{(1)},p^{(2)}) 
\in \Delta'(A).
\end{equation*}
In particular, the relation of the above two formulas (\ref{eqn:ncrank}) and (\ref{eqn:ncrank2}) of nc-rank can be understood from the duality in Theorem~\ref{thm:duality_entanglement}. 
The nc-rank computation algorithm by
Hamada and Hirai~\cite{HamadaHirai2021} solves (a variant of)
the problem (\ref{eqn:S(v)_building}), 
which is a piecewise linear extension ({\em Lov\'asz extension}~\cite{Hirai_Hadamard2022}) of 
the optimization problem in (\ref{eqn:ncrank}); see also~\cite{Hirai2025}.

\section*{Acknowledgments}
The author thanks Keiya Sakabe, Asuka Takatsu, Shinichi Ohta, Yoshinori Hashimoto for discussion,  
Cheng Zhuohao for comments, and Michael Walter for suggesting paper~\cite{Lee2022momentmapconvexfunction}.
The author was supported by JSPS KAKENHI Grant Number JP24K21315.
\bibliographystyle{plain}
\bibliography{G-gradientflow}

\appendix
\section{Proof of Lemma~\ref{lem:ui-convex=>g-convex}}
Suppose that unitarily invariant convex function $F:{\cal H}_n \to \overline{\RR}$ is written as $F = F_f$ 
for a symmetric convex function $f:\RR^n \to \overline{\RR}$.
We first observe ($*$1) if $\xi \in {\cal B}_n$ is 
represented by $\lambda_1 \geq \lambda_2 \geq \cdots \geq \lambda_n$, 
${\cal Y}_1 < {\cal Y}_2 <\cdots < {\cal Y}_n$, 
then $F(\xi) = f(\lambda)$.
Indeed, by the Gram-Schmidt orthonormalization, we choose an orthonormal basis $u_1,u_2,\ldots,u_n$ such that $\langle u_1,u_2,\ldots,u_j\rangle = {\cal Y}_j$ for $j \in [n]$. Then $\xi$ corresponds Hermitian matrix $k (\diag \lambda) k^{\dagger}$ 
for $k= (u_1\ u_2\ \cdots \ u_n) \in U_n$. Thus $F(\xi) = f(\lambda)$.

Since any two points of ${\cal B}_n$ belong to a common apartment, 
it suffices to show that $F$ is convex on each apartment.
For any unitary $k \in U_n$, 
the set ${\cal E}_k:=  \{ k (\diag x) k^{\dagger} \mid x \in \RR^n \}$ 
is one of apartments, in which $\RR^n \ni x \mapsto k (\diag x)s k^{\dagger}$ is an isometry.
Therefore, $F$ is convex on such an apartment ${\cal E}_k$.
Now consider a general apartment ${\cal A}$ for 
(non orthonormal) basis $v_1,v_2,\ldots,v_n$.
Choose an orthonormal basis $u_1,u_2,\ldots,u_n$ such that $\langle u_1,u_2,\ldots,u_j\rangle = \langle v_1,v_2,\ldots,v_j\rangle$ for $j \in [n]$. Let $k:= (u_1\ u_2\ \cdots \ u_n) \in U_n$.
Let $\rho:{\cal A} \to {\cal E}_{k}$ be  
defined by
\begin{equation*}
\begin{array}{c}
y_{i_1} \geq y_{i_2}\geq \cdots \geq y_{i_n}, \\
\langle v_{i_1}\rangle < \langle v_{i_1},v_{i_2} \rangle < \cdots <  \langle v_{i_1},v_{i_2},\ldots,v_{i_n} \rangle 
\end{array} \mapsto 
\begin{array}{c}
y_{i_1} \geq y_{i_2}\geq \cdots \geq y_{i_n}, \\
\langle u_{i_1}\rangle < \langle u_{i_1},u_{i_2} \rangle < \cdots <  \langle u_{i_1},u_{i_2},\ldots,u_{i_n} \rangle 
\end{array}.
\end{equation*}
It is clear that $\rho:{\cal A} \to {\cal E}_{k}$ is an isometry.
By ($*$1), it holds ($*$2) $F(\xi) = F \circ \rho(\xi) = f(y)$.
Therefore, for $\xi,\xi' \in {\cal A}$ and $t \in [0,1]$, 
denoting by $(1-t) \xi + t \xi'$ the point 
$\gamma(t)$ on the unique geodesic 
$\gamma:[0,1] \to {\cal B}_n$ with $\gamma(0) = \xi$ and $\gamma(1) = \xi'$, 
we have 
$F((1-t) \xi + t\xi') = F \circ \rho ((1-t)\xi + t \xi') = F((1-t) \rho(\xi) + t\rho (\xi')) 
\leq (1-t) F \circ \rho(\xi) + t F \circ \rho(\xi') = (1-t) F(\xi) + t F(\xi')$, 
where the first and last equalities follow from ($*$2), the second from the fact 
that $\rho$ is an isometry, and the third follows from the convexity of $F$ on ${\cal E}_k$.

\section{Proof of Proposition~\ref{prop:Q2=>Q3}}
For $x,y \in {\cal M}$, define a bijection $\sigma_{x \to y}:T_x \to T_y$
by $u \mapsto v$ where 
$t \mapsto \exp_x tu$ and $t \mapsto \exp_y tv$ are asymptotic.
Then $\sigma_{x \to y}$ and $\tau_{x\to y}$ are related as follows:  
\begin{Lem}\label{lem:sigma_tau}
For $x,y \in {\cal M}$ and $u \in T_x$, there is $k \in Hol_y({\cal M})$ such that
\begin{equation}\label{eqn:sigma_tau}
\tau_{x\to y} u = k \cdot \sigma_{x \to y} u.
\end{equation}
\end{Lem}
\begin{proof}
Via the de Rham decomposition, 
we may assume that ${\cal M}$ is irreducible.
The Berge holomony theorem implies that $Hol_x({\cal M})$ acts transitively on the sphere in $T_x$ 
or ${\cal M}$ is a symmetric space.
In the former case, since $\|\tau_{x\to y} u\| = \|\sigma_{x\to y} u\|$, 
there is $k \in Hol_y({\cal M})$ such that $\tau_{x\to y} u = k \cdot \sigma_{x \to y} u$.
So suppose that ${\cal M}$ is an irreducible symmetric space (of noncompact type).
It is known (see e.g., \cite[Chapter II.10]{BridsonHaefliger1999}) that ${\cal M} = G/K$ is realized in a totally geodesic subspace 
$P_{n}(\RR) \cap G$ of $P_{n}(\RR) := P_n \cap GL_n(\RR)$ for a connected semisimple Lie group $G \subseteq GL_n(\RR)$ with $K = G \cap O(n)$.
For an isometry $\varphi:M \to M$, we observe that $v = \sigma_{x \to y}v \Leftrightarrow 
d \varphi_y v = \sigma_{\varphi(x) \to \varphi(y)} d \varphi_x u$.
Also $\tau_{\varphi(x) \to \varphi(y)} = d\varphi_y \circ \tau_{x \to y} \circ d\varphi^{-1}_x$ 
and $d\varphi_y \circ k \circ d\varphi^{-1}_y \in Hol_{\varphi(y)}({\cal M})$ for $k \in Hol_{y}({\cal M})$.
Since the isometry $g,x \mapsto gxg^{\dagger}$ acts transitively on ${\cal M}$, it suffices to show (\ref{eqn:sigma_tau}) for a fixed $y  = I$.
Let $x \in {\cal M}$ and $H \in T_x$.
Consider generalized Iwasawa decomposition~\cite[2.17.5]{Eberlein} $x^{-1/2} = k b$, 
where $k \in K = Hol_I({\cal M})$ 
and $b \in G$ induces an isometry that fixes the asymptotic class of $H$, 
i.e.,  $\sigma_{x \to I}H = bHb^{\dagger}$.
Then $\tau_{x \to I} H = x^{-1/2}Hx^{-1/2} = k b H b^{\dagger} k^{\dagger} = k \cdot \sigma_{x \to I} H$.
\end{proof}

\begin{proof}[Proof of Proposition~\ref{prop:Q2=>Q3}]
Suppose that $Q$ satisfies Q2.
Let $x,y \in {\cal M}$ and $u \in T_x$. 
Choose $k \in Hol_y({\cal M})$ satisfying (\ref{eqn:sigma_tau}).
Then 
$Q^*_x(-u) = \sup_{p \in T_x^*}p(-u) - Q_x(p) = \sup_{p \in T_x^*}\tau_{x\to y}p(-\tau_{x \to y}u) - Q_y(\tau_{x \to y}p) = \sup_{q \in T_y^*} q(-k\sigma_{x \to y}u) - Q_y(q) = \sup_{q \in T_y^*} k^{-1}q(-\sigma_{x \to y}u) - Q_y(q) = 
\sup_{q' \in T_y^*} q'(-\sigma_{x \to y}u) - Q_y(q') = Q^*_y(-\sigma_{x\to y}u)$, 
where we used Q2 for the second and the last inequalities.
\end{proof}

\end{document}